\renewcommand{\qed}{$\hfill\square$}
\newcommand{\Bcal}{\mathcal B}
\newcommand{\Ebb}{\mathbb E}
\newcommand{\Ncal}{\mathcal N}
\newcommand{\R}{\mathbb R}
\newcommand{\Pbb}{\mathbb P}
\newcommand{\B}{\mathcal B}
\newcommand{\ex}[2]{\mathbb{E}_{#1}\left[#2\right]}
\newcommand{\real}{\mathbb{R}} 
\newcommand{\prn}[1]{\left({#1}\right)} 
\newcommand{\norm}[1]{\left\|{#1}\right\|} 
\newcommand{\abs}[1]{\left|{#1}\right|} 
\newcommand{\st}{\text{s.t.  }}
\newcommand\ldq\textquotedblleft
\newcommand\rdq\textquotedblright{}
\newcommand\mb\mathbb
\newcommand\mf\mathbf
\newcommand\tf\textbf
\DeclareMathOperator{\argmin}{arg~min}
\newtheorem{assumption}{Assumption}
\definecolor{mygreen}{rgb}{0,0.6,0}
\definecolor{mygray}{rgb}{0.5,0.5,0.5}
\definecolor{mymauve}{rgb}{0.58,0,0.82}
\title{\textbf{\showtitle}}
\author{\showauthor}
\newcommand{\bgap}{\mathcal{G}}
\newcommand{\xopt}{\mathcal X^*}
\newcommand{\KL}[2]{\mathrm{KL}\prn{{#1}\,\|\,{#2}}}
\newcommand{\be}{\begin{equation}}
\newcommand{\ee}{\end{equation}}
\renewenvironment{proof}{{ \it Proof:~}}{\hfill \qed\par}
\title{A Monte Carlo Policy Gradient Method with   Local Search  for Binary Optimization
}
\titlerunning{Monte Carlo Policy Gradient Method for Binary Optimization} 
\author{Cheng Chen \and Ruitao Chen \and Tianyou Li \and Ruichen Ao \and Zaiwen Wen}
\institute{Cheng Chen \at
              Academy for Advanced Interdisciplinary Studies, Peking University, Beijing, CHINA\\
              \email{chen1999@pku.edu.cn}           
            \and
           Ruitao Chen, Tianyou Li, Ruichen Ao\at
            School of Mathematical Sciences, Peking University, Beijing, CHINA\\
            \email{\{chenruitao,tianyouli,archer\_arc\}@stu.pku.edu.cn}
           \and
           Zaiwen Wen \at
            Beijing International Center for Mathematical Research, Peking University, Beijing, CHINA\\
            \email{wenzw@pku.edu.cn}
}
\begin{document}

\maketitle

\begin{abstract}
    Binary optimization has a wide range of applications in combinatorial optimization problems such as MaxCut, MIMO detection, and MaxSAT. However, these problems are typically NP-hard due to the binary constraints. We develop a novel probabilistic model to sample the binary solution according to a parameterized policy distribution. Specifically,   minimizing the KL divergence between the parameterized policy distribution and the Gibbs distributions of the function value leads to a stochastic optimization problem whose  policy gradient can be derived explicitly similar to reinforcement learning.  For coherent exploration in discrete spaces, parallel Markov Chain Monte Carlo (MCMC) methods are employed to sample from the policy distribution with diversity and approximate the  gradient efficiently. We further develop a filter  scheme to replace the original objective function by the one with the local search technique  to  broaden the horizon of the function landscape. Convergence to stationary points in expectation of the policy gradient method is established based on the concentration inequality  for   MCMC. Numerical results show that this  framework is very promising to provide near-optimal solutions for quite a few  binary optimization problems.
    \keywords{Binary optimization \and Policy gradient \and Local search\and Markov chain Monte Carlo\and Convergence }
    \subclass{90C09 
        \and 90C27 
        \and 90C59 
        \and 60J20 
    }
\end{abstract}

\section{Introduction}
In this paper, we consider the following binary optimization problem:
\begin{equation}
    \label{prob:intro}
    \min_x \quad  f(x),\quad
    \st \quad  x\in \Bcal_n,
\end{equation}
where $\Bcal_n = \{-1,1\}^n$ represents the binary set of dimension $n$ and $f$ can be any function defined on $\Bcal_n$. Binary optimization \eqref{prob:intro} is a classical form of optimization with a wide range of practical applications, including the MaxCut problem, the quadratic 0-1 programming problem, the Cheeger cut problem, the MaxSAT problem, and the multiple-input multiple-output (MIMO) detection problem. Obtaining the optimal solution for these problems is typically NP-hard due to the nature of binary constraints. Therefore, exact methods, such as branch-and-bound \cite{land1960automatic} and cutting plane \cite{gilmore1961linear} designed for linear integer programming, hardly work for general large-scale binary problems. Moreover, the objective function $f$ in \eqref{prob:intro} may be quite complicated, adding difficulties in finding the global optimum.

Approximate algorithms based on relaxation sacrifice optimality to find high-quality solutions. For binary quadratic problems, semidefinite programming (SDP) provides a relaxation of the primal problem and achieves near  optimal solutions with the particular rounding scheme\cite{wai2011cheap, burer2001projected}. This becomes a paradigm to solve the binary optimization problem with quadratic objective function. MADAM solves MaxCut based on SDP and ADMM \cite{hrga2021madam}. EMADM provides an entropy regularized splitting model for the SDP relaxation on the Riemannian manifold \cite{liu2022entropy}. BiqCrunch incorporates an efficient SDP-based bounding procedure into the branch-and-bound method\cite{krislock2017biqcrunch}.
Solving the dual problem of SDP relaxation is also utilized for the MaxCut problem \cite{rendl2010solving,krislock2014improved}, which can be further efficiently solved by a customized coordinate ascent algorithm \cite{buchheim2019sdp}.

Heuristic methods is the another way to provide a good solution for the problem within a reasonable time frame, but do not have a reliable theoretical guarantees in most cases. One of the classical heuristic methods is simulated annealing, which is developed to find a good solution by iteratively swapping variables between feasible and infeasible points with a probability that depends on the measurement called ``temperature" \cite{eglese1990simulated,kirkpatrick1983optimization,aarts1989simulated}. Greedy algorithm is the another type of heuristic methods that makes the locally optimal choice at each step and is applied on the traveling salesman problem \cite{gutin2006traveling}, the minimum spanning tree problem \cite{prim1957shortest} and etc.
A typical greedy method is local search, which explores the search space in the neighborhood of the current solution and selects the best neighboring solution \cite{baum1987towards,martin1991large,mladenovic1997variable,johnson1997traveling}.

In recent years, there has been a growing interest in applying machine learning methods to solve combinatorial optimization problems. The early work was based on pointer networks\cite{vinyals2015pointer}, which leverage sequence-to-sequence models to produce permutations over inputs of variable size, as is relevant for the canonical travelling salesman problem (TSP). Since then, numerous studies have fused GNNs with various heuristics and searching  procedures to solve specific combinatorial optimization problems, such as graph matching\cite{bai2019simgnn}, graph colouring\cite{lemos2019graph} and the TSP\cite{li2018combinatorial}.
Recent works aim to train neural networks in an unsupervised, end-to-end fashion, without the need for labelled training sets.  Specifically, RUN-CSP\cite{toenshoff2019run} solves optimization  problems that can be framed as maximum constraint satisfaction  problems. In \cite{yao2019experimental,karalias2020erdos}, a GNN is trained to specifically solve the MaxCut problem on relatively small graph sizes with up to 500 nodes.

\subsection{Our contributions}
In this work, we propose an efficient and stable framework, called Monte Carlo policy gradient (MCPG), to solve binary optimization problems by stochastic sampling. Our main contributions are summarized as follows.

\begin{itemize}
    \item \textbf{A novel probabilistic approach}.   We first construct a probabilistic model to sample binary solutions from a  parameterized  policy distribution. While the Gibbs distribution can locate optimal binary solutions, it suffers from high computational complexity of sampling for a complicated function. Therefore, we introduce a parameterized policy distribution with improved sampling efficiency and a stochastic optimization problem is obtained by minimizing the KL divergence between the parameterized policy distribution and the Gibbs distributions. Due to the parameterization on the distribution, an explicit formulation of the gradient  can be derived similarly to the policy gradient in reinforcement learning. Consequently, a policy gradient method with stochastic sampling is applied to find the parameters of the policy distribution and the parameterized policy distribution can guide us to search for high-quality binary solutions.
    \item \textbf{An efficient parallel sampling scheme}. Instead of direct sampling, we employ a parallel MCMC method to sample from the policy distribution and compute the policy gradient with MCMC samples. The parallel MCMC method takes advantage of both high-quality initialization and massive parallelism. The Markov property of samples maintains consistency of the sampling process and the parallelization enables a wide range of exploration for binary solutions.  A filter function is further developed  in sampling to flatten the original binary optimization problem within a neighborhood.  Based on local search methods, the filter technique improves the quality of binary solutions and  reduces the potential difficulty to deal with the functions in discrete spaces.
    \item \textbf{Theoretical guarantees for near-optimal  solutions}. 
          First, the probabilistic model with a smaller expectation of the function value is shown to have a higher probability for sampling optimal points. Then, we demonstrate that the filter function may lead to smaller function values with high probability. Using the concentration inequality of Markov chains, it is proven that the policy gradient method converges to stationary points in expectation. These results partly explain why our method can provide near-optimal solutions with efficiency and stability.

    \item \textbf{A simple yet practical implementation}. Our algorithm can outperform other state-of-the-art algorithms in terms of solution quality and efficiency on quite a few problems such as MaxCut, MIMO detection, and MaxSAT. Specifically, for the MaxCut problem, MCPG achieves better solutions than all previously reported results on two instances of the public dataset Gset, indicating that MCPG can find optimal solutions.
\end{itemize}

Additionally, our approach can also be extended to solve the general constrained binary optimization problem:
\begin{equation}
    \label{eq:constprob}
    \begin{aligned}
        \min_{ x } \quad  f(x), \;  \st  \; c(x) = 0,\; x\in \Bcal_n,
    \end{aligned}
\end{equation}
where
$c(x)=(c_1(x),\ldots,c_m(x))^\top \in \mathbb{R}^m$.

\subsection{Comparison with Other Probabilistic Methods}
Sampling the binary solution from well-designed distribution is not rare in solving binary optimization problems. For example, the Goemans-Williamson (GW) algorithm \cite{goemans1995improved} uses the randomized rounding technique in solving binary quadratic problems based on semidefinite relaxation. Since the SDP solution can be decomposed as $X=V^{T}V$ where $V=[v_1,\dots,v_n]$ with $v_i$ on the unit sphere, the $i$-th component of the binary solution is obtained by taking the sign of the inner product between $v_i$ and a random vector $r$ uniformly on the sphere. It has been proven that the GW algorithm has a $0.878$ approximate rate on the MaxCut and Max2SAT problems \cite{goemans1995improved,goemans1994879}. From then on, variants of the GW algorithm have been proposed for MIMO detection problems \cite{mobasher2007near, jiang2021tightness} and MaxCut problems \cite{krislock2014improved, rendl2010solving}.
While the sampling distribution is fixed in the GW algorithm for randomized rounding, our  parameterized distribution is refined iteratively in the policy gradient procedure.

Many learning methods for combinatorial optimization also appears with probabilistic methods. Erdos \cite{karalias2020erdos} is one of the typical learning methods, which proposes an unsupervised learning approach for combinatorial optimization problems on graphs. It utilizes the distribution over nodes parameterized by a graph neural network and performs mini-batch gradient descent to a probabilistic
penalty loss function. Many of the unsupervised learning algorithms have  similar framework as Erdos. PI-GNN \cite{schuetz2022combinatorial} applys a differentiable loss function based on the relaxation strategy to the problem Hamiltonian, where the probability over nodes is also represented by a graph neural network.
They sample directly from the distribution over nodes and generate binary solutions with an extra decoding step. Distinguished from these methods, we establish an entropy-regularized probabilistic model with a parameterized sampling policy and employ a MCMC sampling algorithm to keep the consistency among the binary solutions.

\subsection{Organization}
The rest of this paper is organized as follows. In Section \ref{sec:model}, we propose a  probabilistic model and adopt a policy gradient method. Then, a universal prototype algorithm is given. In Section \ref{sec:MCPG}, we modify our MCPG framework with a filter function. In Section \ref{sec:theory}, we investigate  theoretical results on the local search technique and the  convergence of MCPG. Numerical experiments on various problems are reported in Section \ref{sec:numerical}. Finally, we
conclude the paper in Section \ref{sec:conclusion}.

\section{Probabilistic model for binary optimization}
\label{sec:model}
In this section, we will build the probabilistic model for solving binary optimization problems. A discussion that optimal points can be found by Gibbs distributions is first presented. It motivates us to introduce an extra parameterized policy distribution as an approximation of Gibbs distributions. We construct the objective function by minimizing the KL divergence between these two distributions. The mean field (MF) approximation is used to reduce the complexity of our model. Finally, we describe a policy gradient method to find the optimal sampling policy.

\subsection{Parameterized probabilistic model}
Since binary optimization problem \eqref{prob:intro} is typically NP-hard, finding a point $x$ of high quality poses a significant challenge. The goal is to search for the set of optimal points $\xopt$ in the discrete space $\Bcal_n$, where continuous optimization algorithms cannot operate directly. A probabilistic approach is to consider an indicator distribution, i.e.,
\begin{equation*}
    q^{*}(x)=\frac{1}{|\xopt|}\mathbbm{1}_{\xopt}(x)=\begin{cases}
        \frac{1}{|\xopt|}, & \quad x\in \xopt,     \\
        0,                 & \quad x\not\in \xopt.
    \end{cases}
\end{equation*}
The distribution $q^{*}$ is the ideal distribution where only the optimal points have the uniform probability. However, the optimal points set $\xopt$ is unknown. To approach $q^{*}$, we introduce the following family of Gibbs distributions:
\begin{equation}
    \label{eq:Gibbs}
    q_\lambda(x) = \frac{1}{Z_\lambda}\exp\left(-\frac{f(x)}{\lambda}\right),\quad x\in\Bcal_n,
\end{equation}
where $Z_\lambda = \sum_{x\in\Bcal_n}\exp\left(-\frac{f(x)}{\lambda}\right)$ is the normalization factor. Given the optimal value $f^*$ of the objective function, for any $x\in \Bcal_n$, we notice that
\begin{equation}
    \label{eq:Gibbs_approx}
    \begin{aligned}
        q_{\lambda}(x)
         & = \frac{\exp\left(\frac{f^*-f(x)}{\lambda}\right)}{\sum_{x\in\Bcal_n}\exp\left(\frac{f^*-f(x)}{\lambda}\right)} =\frac{\exp\left(\frac{f^*-f(x)}{\lambda}\right)}{|\xopt|+\sum_{x \in \Bcal_n/\xopt}\exp\left(\frac{f^*-f(x)}{\lambda}\right)} \\
         & \to \frac{1}{|\xopt|}\mathbbm{1}_{\xopt}(x),\quad  \text{as } \lambda\to 0.                                                                                                                                                                    \\
    \end{aligned}
\end{equation}
Although $q^{*}$ is not computable in practice, the Gibbs distribution $q_{\lambda}$, which converges to $q^{*}$ pointwisely, provides a feasible alternative for searching optimal solutions.

Motivated by this observation, one can devise an algorithm to randomly sample from the Gibbs distribution and control the parameter $\lambda$ to approximate the global optimum. However, since $q_{\lambda}(x)$ needs to compute the summation of $2^{n}$ terms in the denominator $Z_{\lambda}$, the complexity of direct sampling grows exponentially. Meanwhile, $q_{\lambda}$ is hard to sample when $f$ owns a rough landscape. Instead of direct sampling from the Gibbs distribution, we approximate it using a parameterized policy that can be sampled efficiently. The parameterized distribution $p_{\theta}(x|\mathcal{P})$ is designed to have a similar but regular landscape to that of $q_{\lambda}$ to enable efficient sampling. For notational convenience, we write $p_{\theta}(x)=p_{\theta}(x|\mathcal{P})$.

To measure the distance between $p_{\theta}$ and $q_{\lambda}$, we introduce the KL divergence  defined as follows:
\begin{equation*}
    \KL{p_\theta}{q_\lambda}= \sum_{x\in\B_n}p_\theta(x)\log\frac{p_\theta(x)}{q_\lambda(x)}.
\end{equation*}
In order to reduce the discrepancy between the policy distribution $p_\theta$ and the Gibbs distribution $q_\lambda$, we minimize this KL divergence. We notice that
\begin{align}
    \label{eq:KL}
    \min_{\theta}\quad\KL{p_\theta}{q_\lambda} & = \frac{1}{\lambda}\sum_{x\in\B_n}p_\theta(x) f(x)+\sum_{x\in\B_n}p_\theta(x)\log p_\theta(x)+\log Z_\lambda\nonumber \\
                                               & =\frac{1}{\lambda} \prn{\ex{p_\theta}{ f(x)}+\lambda\ex{p_\theta}{\log p_\theta(x)}}+\log Z_\lambda.
\end{align}
Since $Z_\lambda$ is a constant, \eqref{eq:KL} is equivalent to the following problem:
\begin{equation}
    \label{prob:loss}
    \min_{\theta}\quad L_\lambda(\theta) = \underbrace{\ex{p_\theta}{f(x)}}_{L(\theta)}+\lambda\underbrace{\ex{p_\theta}{\log p_\theta(x)}}_{H(\theta)}.
\end{equation}
While minimizing \eqref{prob:loss} is to find an approximation of Gibbs distribution $q_\lambda$, one can explain \eqref{prob:loss} from the perspective of optimization. In \eqref{prob:loss}, the first term is the expected loss of $f(x)$ with $x\sim p_\theta$ and the second term $\ex{p_\theta}{\log p_\theta(x)}$ is the entropy regularization of $p_{\theta}$. Minimizing the first term $L(\theta)$ yields the optimal value $f^*$ when $p_{\theta}(x\in \xopt)=1$. However, it leads to hardship on sampling as $p_{\theta}$ is rough. By adding the entropy regularization, we encourage exploration and promote the diversity of samples. On the other hand, the regularization parameter $\lambda$, which is also called annealing temperature, progressively decreases from an initial positive value to zero as in the classical simulated annealing, and the global optimum are finally obtained after  a sufficiently long time.

\subsection{Parameterization of sampling policy}
As we discuss above, we can use probabilistic methods to solve binary optimization problems. The sampling policy can be parameterized by a neural network  with $\mathcal{P}$ containing features of the problem, that is, \begin{equation}
    \label{eq:neuralpolicy}
    p_{\theta}(x|\mathcal{P})\propto e^{\phi_{\theta}(x,\mathcal{P})},
\end{equation}
where $\phi$ represents a neural network with input $x$ and problem instance $\mathcal{P}$. It is unrealistic to generate a normalized distribution over $2^n$ points in $\Bcal_{n}$. The unnormalized policy \eqref{eq:neuralpolicy} can be sampled  through the Metropolis algorithm. The neural network can be designed to reduce the computational complexity in the sampling process.

One natural idea is to consider the distribution on each single variable and assume independence, which is called the mean field (MF) approximation in statistical physics. The MF methods provide tractable approximations for the high dimensional computation in probabilistic models. By neglecting certain dependencies between random variables, we simplify the parameterized distribution $p_{\theta}$ with the independent assumption for each component of $x$. As for binary optimization problems $\mathcal{P}$, the parameterized probability follows the multivariate Bernoulli distribution:
\begin{equation}
    \label{eq:mf}
    p_{\theta}(x)=\prod_{i=1}^{n}\mu_i^{(1+x_i)/2}(1-\mu_i)^{(1-x_i)/2}	,~\mu_i=\frac{1}{1+e^{-\theta_i}},
\end{equation}
where $\theta_i, ~i=1,\dots,n$ are parameters.
Under the mean field approximation, $x_1,x_2,\dots,x_n$ are viewed independent and $\mu_i$ is the probability that the $i$-th variable takes value $1$. 
The MF approximation provides an efficient way to encode binary optimization problems and greatly reduces the complexity of the model from $2^n$ to $n$. Besides, the distribution under MF approximation is beneficial for the Markov chain Monte Carlo sampling.

\subsection{Prototype Algorithm}
\label{sec:primitive}
In the above discussion, we have already constructed the loss function $L_{\lambda}(\theta)$ and a mean field sampling policy $p_{\theta}(x)$. To train the policy, the gradient formulation is derived explicitly. Then, we purpose the brief algorithm framework. The sampled solutions help to compute the policy gradient and improve the sampling policy. The policy is optimized by the stochastic policy gradient, which guides us to sample those points with lower function values. Thus, the binary optimization problems are solved by stochastic sampling based on a neural network policy.

\begin{lemma}\label{lem:loss_grad}
    Suppose for any $x\in\Bcal_n$, $p_\theta(x)$ is differentiable with respect to $\theta$. For any constant $c\in\real$, the gradient of the loss function \eqref{prob:loss} is given by
    \begin{equation}
        \label{prob:loss_gradient}
        \nabla_\theta L_\lambda(\theta) = \ex{p_\theta}{(f(x)+\lambda\log p_\theta(x)-c)\nabla_\theta\log p_\theta(x)}.
    \end{equation}

\end{lemma}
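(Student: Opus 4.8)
The plan is to differentiate the finite sum defining $L_\lambda(\theta)$ directly and then apply the score-function (log-derivative) identity, exactly as in the REINFORCE derivation from reinforcement learning. First I would rewrite \eqref{prob:loss} as
\[
L_\lambda(\theta) = \sum_{x\in\Bcal_n} p_\theta(x)\bigl(f(x)+\lambda\log p_\theta(x)\bigr),
\]
a finite sum of functions differentiable in $\theta$ by hypothesis; for the mean field policy \eqref{eq:mf} one has $p_\theta(x)>0$, so $\log p_\theta(x)$ is smooth, and more generally one adopts the convention $0\log 0=0$ and restricts the sum to the support of $p_\theta$. Since the index set is finite, $\nabla_\theta$ commutes with the sum and we may differentiate termwise.

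Next I would use the elementary identity $\nabla_\theta p_\theta(x) = p_\theta(x)\,\nabla_\theta\log p_\theta(x)$ (valid wherever $p_\theta(x)>0$) together with the zero-sum fact $\sum_{x}\nabla_\theta p_\theta(x) = \nabla_\theta\sum_x p_\theta(x) = \nabla_\theta 1 = 0$. For the expected-loss term this gives
\[
\nabla_\theta\sum_{x} p_\theta(x)f(x) = \sum_{x} f(x)\,\nabla_\theta p_\theta(x) = \ex{p_\theta}{f(x)\,\nabla_\theta\log p_\theta(x)},
\]
and for the entropy term, writing $\nabla_\theta\bigl(p_\theta(x)\log p_\theta(x)\bigr) = (\log p_\theta(x)+1)\nabla_\theta p_\theta(x)$ and discarding the "$+1$" part by the zero-sum identity,
\[
\nabla_\theta\sum_{x} p_\theta(x)\log p_\theta(x) = \sum_{x}\log p_\theta(x)\,\nabla_\theta p_\theta(x) = \ex{p_\theta}{\log p_\theta(x)\,\nabla_\theta\log p_\theta(x)}.
\]
Adding the two pieces yields \eqref{prob:loss_gradient} in the special case $c=0$.

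Finally, to accommodate an arbitrary baseline $c\in\real$, I would apply the zero-sum identity once more: $\ex{p_\theta}{c\,\nabla_\theta\log p_\theta(x)} = c\sum_{x}\nabla_\theta p_\theta(x) = 0$, so subtracting $c$ inside the expectation in \eqref{prob:loss_gradient} leaves its value unchanged. This completes the proof. The computation is entirely routine; the only subtlety worth a sentence is the interchange of $\nabla_\theta$ with the summation and the behavior of $\log p_\theta$ near zeros of $p_\theta$, both of which are harmless here because the state space $\Bcal_n$ is finite and, for the mean field parameterization actually employed, $p_\theta$ is strictly positive on $\Bcal_n$.
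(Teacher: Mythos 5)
Your proposal is correct and follows essentially the same route as the paper's proof: termwise differentiation of the finite sum, the log-derivative identity $\nabla_\theta p_\theta(x)=p_\theta(x)\nabla_\theta\log p_\theta(x)$, and the zero-sum fact $\sum_{x}\nabla_\theta p_\theta(x)=0$ to discard both the extra term from the entropy and the baseline $c$. The only difference is cosmetic (you differentiate $L(\theta)$ and $H(\theta)$ separately and add a remark on positivity of $p_\theta$, which the paper leaves implicit).
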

\begin{proof}
    By exchanging the order of differentiation and summation, we note that
    \begin{equation}
        \label{eq:int_0}
        \begin{aligned}
            \ex{p_\theta}{\nabla_\theta\log p_\theta(x)}
            = \sum_{x\in\Bcal_n}\nabla_\theta p_\theta(x)=\nabla_\theta\sum_{x\in\Bcal_n}p_\theta(x)=\nabla_\theta 1=0,
        \end{aligned}
    \end{equation}
    as $\sum_{x\in\Bcal_n}p_\theta(x)=1$ is a constant. The gradient of the loss function \eqref{prob:loss} is derived by
    \begin{align*}
        \nabla_\theta L_\lambda(\theta) 
         & = \nabla_\theta\sum_{x\in\Bcal_n}p_\theta(x)\prn{ f(x)+\lambda\log p_\theta(x)}                                                                                \\
         & = \lambda\sum_{x\in\Bcal_n}p_\theta(x)\nabla_\theta\log p_\theta(x)+\sum_{x\in\Bcal_n}\prn{ f(x)+\lambda\log p_\theta(x)}\nabla p_\theta(x)                    \\
         & \overset{(i)}{=}\lambda\sum_{x\in\Bcal_n}\nabla_\theta p_\theta(x)+\sum_{x\in\Bcal}\prn{ f(x)+\lambda\log p_\theta(x)}p_\theta(x)\nabla_\theta\log p_\theta(x) \\
         & \overset{(ii)}{=}\sum_{x\in\Bcal}\prn{ f(x)+\lambda\log p_\theta(x)}p_\theta(x)\nabla_\theta\log p_\theta(x)                                                   \\
         & =\ex{p_\theta}{\prn{ f(x)+\lambda\log p_\theta(x)}\nabla_\theta \log p_\theta(x)},
    \end{align*}
    where $(i)$ uses the substitution $\nabla_{\theta} p_{\theta}(x)=p_{\theta}(x)\nabla_{\theta}\log p_{\theta}(x)$ and $(ii)$ is due to \eqref{eq:int_0}. For any given $c$ irrelevant to $x$, \eqref{eq:int_0} yields that
    $\ex{p_\theta}{c\nabla_\theta\log p_\theta(x)}=0$. This completes the proof.
\end{proof}

Especially, we take $c=\ex{p_\theta}{f(x)}$ and denote $A_{\lambda}(x;\theta)=f(x)+\lambda\log p_\theta(x)-\ex{p_\theta}{f(x)}$. We call the function $A_\lambda(x;\theta)$ as the ``advantage function'' analogous to the one in reinforcement learning \cite{williams1992simple}.  It has been shown that such technique can reduce the variance in our training and make it easier to sample points with lower cost. Hence, the stochastic gradient is computed by
\[
    \begin{aligned}
        A_\lambda(x;\theta,S) & =f(x)+\lambda\log p_\theta(x)-\sum_{x\in S}f(x),                      \\
        \bar{g}(\theta,S)     & =\sum_{x\in S}A_\lambda(x;\theta,S)\nabla_{\theta}\log p_{\theta}(x),
    \end{aligned}
\]
where $S$ is a sample set from the distribution $p_{\theta}(x)$.

Using the probabilistic model, we propose a prototype algorithm for solving binary optimization problems as follows:
\begin{enumerate}
    \item At the beginning of each iteration, the probabilistic model provides a distribution $p_\theta(\cdot | \mathcal{P})$, indicating where the good solution potentially lies.
    \item A set of points $S$ are sampled from $p_\theta(\cdot | \mathcal{P})$.
    \item The policy gradient is computed in order to update the probabilistic model, providing an improved distribution for the next iteration.
    \item The parameter $\theta$ in the probabilistic model is updated as follows
          \begin{equation*}
              \theta^{t+1} = \theta^t - \eta^t \bar{g}(\theta^t, S^t),
          \end{equation*}
          where $\eta^t$ and $S^t$ are the step size and the sample set at the $t$-th iteration.
\end{enumerate}

Algorithm \ref{alg:Proto} gives the pseudo-code of the prototype algorithm. The prototype algorithm is efficient in obtaining better solutions compared to most naive heuristic methods, since similar  frameworks are commonly used in machine learning. However, it also faces several critical challenges. It is difficult to determine whether a good solution has been achieved.
Moreover, keeping the diversity of samples remains challenges at the later iterations. On the other hand, due to the NP-hard nature of the problem \eqref{prob:intro}, there are many local minimum in solving problem \eqref{prob:loss}, making it challenging for policy gradient methods to find global minima. This difficulty is prevalent in most reinforcement learning scenarios.

\begin{algorithm}[htbp]
    \label{alg:Proto}
    \caption{Prototype algorithm based on the probabilistic model}
    \label{alg:MCPG-simple}
    \SetAlgoNoEnd
    \KwIn{problem instance $\mathcal{P}$, probabilistic model $p_\theta(\cdot|\mathcal{P})$, number of epochs $\tau$}
    \For{$t=1$ to $\tau$}{
        Obtain $p_\theta(\cdot|\mathcal{P})$ from the probabilistic model\;
        Obtain set $S^t$ sampling from $p_\theta(\cdot|\mathcal{P})$\;
        Compute the advantage and the gradient using $S^t$\;
        Update $\theta$ using policy gradient on $S^t$\;
    }
    \textbf{return}	the best found solution in samples\;
\end{algorithm}

\section{A Monte Carlo policy gradient method}
\label{sec:MCPG}
The probabilistic model successfully transforms binary optimization problems into training the policy for sampling. Since neural networks help us sample the lower function value from discrete spaces, there are technical details to make the problem-solving process more efficient. In this section, we introduce our MCPG  method for solving binary optimization problems \eqref{prob:intro} using the probabilistic model \eqref{prob:loss}. It  consists of the following main components:
\begin{enumerate}
    \item a filter function $T(x)$ that enhances the objective function, reducing the probability of the algorithm from falling into local minima;
    \item a sampling procedure with filter function, which starts from the best solution found in previous steps and tries to keep diversity;
    \item a modified policy gradient algorithm to update the probabilistic model;
    \item a probabilistic model that outputs a distribution $p_\theta(\cdot|\mathcal{P})$, guiding the sampling procedure towards potentially good solutions.
\end{enumerate}

The pipeline of MCPG is demonstrated in Fig. \ref{pipeline}. In each iteration, MCPG starts from the best samples of the previous iteration and performs MCMC sampling in parallel. The algorithm strives to obtain the best solutions with the aid of the powerful probabilistic model. To improve the efficiency, a filter function is applied to compute a modified objective function. At the end of the iteration, the probabilistic model is updated using policy gradient, ensuring to push the boundaries of what is possible in the quest for optimal solutions.

\begin{figure}[htbp]
    \centering
    \includegraphics[width = \linewidth]{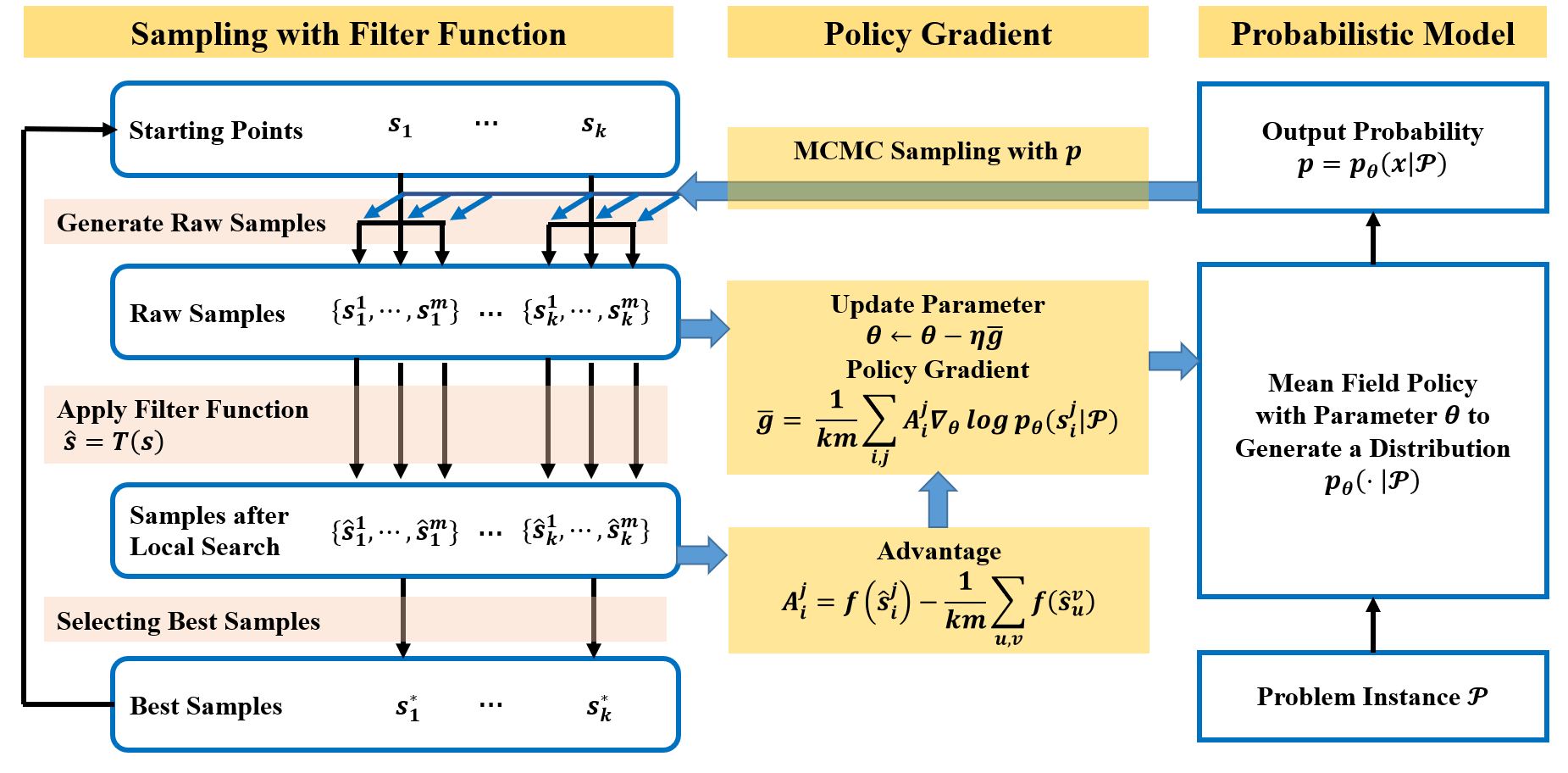}
    \caption{The pipeline of MCPG.}
    \label{pipeline}
\end{figure}

In the next few subsections, we first discuss the filter function. Then we present the sampling method and the update scheme in detail.

\subsection{The Filter Function}\label{sec:local_search}
The filter function $T(x)$ is a critical component of MCPG, which in essense maps an input $x$ to a better point $\hat{x}$. Since the specific details of the function $T$ depend on the structure of the problem, we next give a general definition.

\begin{definition} [Filter Function]
    \label{filterdefinition}
    For each $x \in \mathcal B_n$, let $\mathcal N(x)\subset \mathcal B_n$ be a neighborhood of $x$  such that $x\in \mathcal N(x)$,  $|\mathcal N(x)|\geq 2$ and  any point in $\mathcal N(x)$ can be reached by applying a series of ``simple" operations to $x$.  Then we define a filter function $T(x)$ induced by the neighborhood  $\mathcal N(x)$  as
    \begin{equation*}
        T(x)\in\mathop{\arg\min}\limits_{\hat{x}\in \mathcal N(x)}f(\hat{x}),
    \end{equation*}
    where $T(x)$ is arbitrarily chosen if there exists multiple solutions.
\end{definition}
With the filter function $T$, the original binary optimization problem is converted to
\begin{equation}
    \label{prob:prob_filter}
    \begin{array}{cc}
        \min & f(T(x)),    \quad
        \st  \quad               x\in \Bcal_n.
    \end{array}
\end{equation}
For an optimal point $x^*$ of \eqref{prob:intro}, it holds that $x^{*}= T(x^{*})$ and $f(x^*) = f(T(x^*))$. We further have $f(T(x)) \ge f(x^*)$ because $T(x) \in  \Bcal_n$, which implies that $x^*$ is also the optimal point of $f(T(x))$. Therefore, the problem \eqref{prob:prob_filter} has the same minimum value as \eqref{prob:intro}. The filter function maintains the global optimal solution of $f(x)$. At the same time, it smoothens $f(x)$ and mitigates the common problem that the probabilistic model falls into a local minima during the iteration. We will demonstrate the effect of filter function at the end of this subsection.

There are many ways to choose the neighborhood $\mathcal N(x)$ and the filter $T$. For example, $T$ can be the projection from $x$ to the best point among its neighbors where at most $k$ variables differ from $x$, i.e.,
\begin{equation}
    \label{T1}
    T_k(x) \in \mathop{\arg\min}\limits_{\|\hat{x}-x\|_{1} \le 2k} f(\hat{x}). 
\end{equation}
The above procedure can be performed efficiently in parallel for all samples. Moreover,
an algorithm designed to improve an input point $x$ can also serve as a filter function, even if the neighborhood set $\mathcal N(x)$ cannot be formulated explicitly. For example,  we can introduce a greedy algorithm called Local Search (LS):
\begin{equation}
    \label{T_Greedy}
    T_{LS}(x) = \mathtt{LocalSearch}_f(x).
\end{equation}
LS starts with a feasible point $x^{(0)}$ and then repeatedly explores the nearby points by making small modifications to the current point. It will move to a better nearby point until a locally optimal solution is found. The pseudo-code of LS is demonstrated in Algorithm \ref{alg:localsearch}. Although the neighborhood $\mathcal{N}(x)$ can not be explicitly expressed in LS, it constructs a search tree during iterations. The search tree covers a wide range of points that is nearby to the input $x$.

\begin{algorithm}[htbp]
    \label{alg:localsearch}
    \caption{Local Search}
    \SetAlgoNoEnd
    \KwIn{Feasible point $x$, permutation of the variable indices $I = \{i_1,i_2,\cdots,i_n\}$}
    \For{$t=1$ to $n$}{
        Choose a single variable from the current point $x$ based on permutation $I$\;
        Consider the nearby set of $x$ on the index $i_t$
        \begin{equation*}
            \Ncal(x,i_t) = \left\{x^{\prime} ~|~ x^{\prime}_{i_t} \in \{-1,1\}, x^{\prime}_{j} = x_{j},\forall j \in \{1,2,\cdots,n\}\backslash\{i_t\}\right\};
        \end{equation*}\\
        Determine if the flipping operation is an improvement. We
        find the best point within $\Ncal(x^{(t-1)}, i_t)$, i.e.,
        \begin{equation*}
            x^{(t)} = \mathop{\arg\min}\limits_{x\in \Ncal(x^{(t-1)}, i_t)} f(x);
        \end{equation*}
    }
    \textbf{return}	$T_{LS}(x)=x^{(n)}$\;
\end{algorithm}

We demonstrate the effect of the filter functions $T_{k}$ and $T_{LS}$ on the MaxCut problem. The MaxCut problem on the graph $G = (V,E)$ can be expressed as the following format:
\begin{equation*}
    \max  \quad  \sum_{(i,j) \in E} w_{ij} (1-x_i x_j),
    \quad \st   \quad  x\in \{-1, 1\}^n.
\end{equation*}
The effect of the filter functions $T_k(\cdot)$ and $T_{LS}(\cdot)$ is shown in Fig. \ref{fig:T}. The experiments are carried out on the Gset instance G22, which has 2000 nodes and 19990 edges. We first show the expectation of the objective function before and after applying the filter function. The expectation is approximated by uniform sampling. As illustrated in Section \ref{sec:result_prob}, the probabilistic model gains better performance as the expectation gets closer to the optimum. From Fig. \ref{fig:T}(a), we find out that the expectation grows rapidly with such a simple filter function.

To illustrate the influence of the filter function, we select 20 solutions $\{x_1, \cdots, x_{20}\}$ and depict the change of the objective function in Fig. \ref{fig:T}(b). Each solution has only one variable differing from the preceding one, i.e., $\|x_{i+1}-x_{i}\|_1 = 2$. These points construct a “cross section" of the MaxCut problem. After applying the filter function, the function becomes smoother, leading to a reduction in the number of local maxima.

\begin{figure}[htbp]
    \centering  
    \subfigure[The expectation of the objective function.]{
        \includegraphics[scale=0.42]{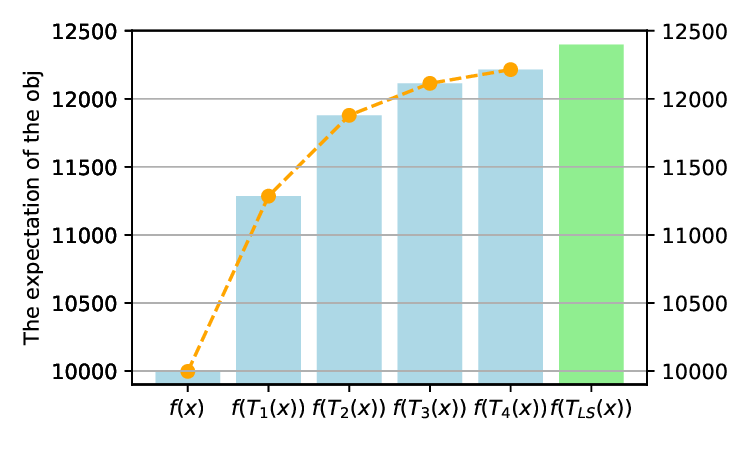}
    }
    \subfigure[A selected sequence of solutions.]{
        \includegraphics[scale=0.42]{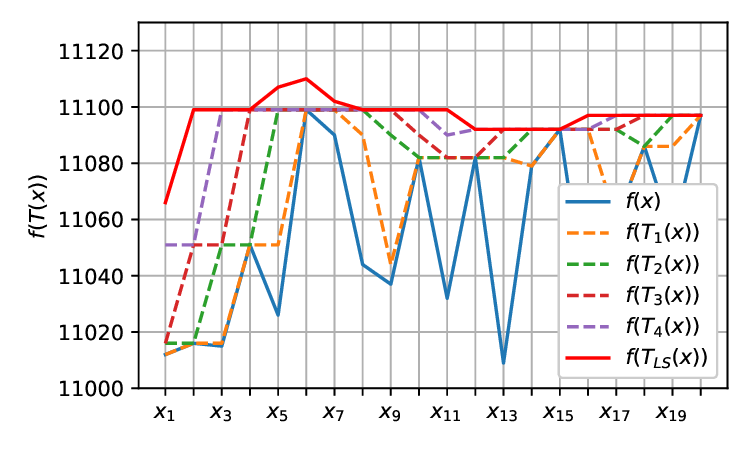}
    }
    \caption{The effect of the filter function for MaxCut problem on the Gset instance G22.}
    \label{fig:T}
\end{figure}

\subsection{Parallel sampling methods with filter function}\label{sec:mcmc}

In this section we introduce our sampling method. To facilitate the application of the policy gradient, a substantial number of samples are required. Although binary solutions are relatively straightforward to obtain, direct sampling based on the Bernoulli distribution at each iteration precludes the use of previously obtained samples. Nevertheless, ``old" samples are valuable in aiding the algorithmic process.

To utilize old samples, we combine the filter function with MCMC sampling techniques to enhance the quality of our samples. Algorithm \ref{HM-single} shows our sampling procedure for a single starting point, which is based on the Metropolis-Hastings (MH) algorithm to establish a Markov chain with the stationary distribution $p_\theta(\cdot)$ and is integrated with the filter function.

\begin{algorithm}[htbp]
    \small
    \SetAlgoLined
    \SetAlgoNoEnd
    \caption{A Parallel MH algorithm with filter function}
    \label{HM-single}
    \KwIn{The starting state $x_0$, transition number $t$, number of chain $m$, proposal distribution $Q(x^{\prime}|x)$.}
    \SetKwFunction{FunctionName}{sampling}
    \SetKwProg{Fn}{Function}{:}{}
    \SetKwProg{ParFor}{for}{ do in parallel}{end}
    \Fn{\FunctionName{$x_0$, $t$, $m$}}{
    \ParFor{$j = 1$ \KwTo $m$}{
    Copy the starting state $x_0^j = x_0$ for this chain\;
    \ParFor{$v = 0$ \KwTo $t-1$}{
    Propose a new state $x^{\prime}$ by sampling from a proposal distribution $Q(x^{\prime} | x_v^{j})$\;
    Compute the acceptance probability $\alpha(x^{\prime} | x_v^{j}) = \min\left(1, \frac{p_\theta(x_v^{j})Q(x_v^{j} | x^{\prime})}{p_\theta(x^{\prime})Q(x^{\prime} | x_v^{j})}\right)$\;
    Generate a random number $u \sim \text{Uniform}(0,1)$\;
    \eIf{$u < \alpha(x^{\prime} | x_v^{j})$}{Set $x_{v+1}^{j} = x^{\prime}$\;
    }{Set $x_{v+1}^{j} = x_v^{j}$\;}

    }
    Obtain $s^j = x_{t}^{j}$\;
    Apply filter function and obtain $\hat{s}^j = T(s^j)$\;
    }
    \textbf{return} the sample set $S=\{s^1, s^2, \cdots, s^m\}$ and $\hat{S}=\{\hat{s}^1, \hat{s}^2, \cdots, \hat{s}^m\}$\;
    }
\end{algorithm}

In MCPG, we choose $k$ starting points, which is selected from the best samples in the last iteration or randomly generated for the first iteration, and construct $m$ Markov chains for each of them. The computation of the $km$ chains can be carried out in parallel on the modern computational device, i.e., the outer loop is actually executed in parallel in Algorithm \ref{HM-single}.

With the distribution $p_{\theta^t}(\cdot)$, the transition procedure is the same as the ordinary MH algorithm. For all the chains, we only select the last states of a chain, and obtain a sample batch $	S_i=\{s_i^1,s_i^2,...,s_i^m\}$ for each of the starting point $s_i$.
Hence, we get $k$ batches and obtain $km$ raw samples $s_i^j$ for $1\leq i\leq k, 1\leq j\leq m$.

As shown in Fig. \ref{pipeline}, the filter function is used to enhance the objective function. To fit this modification, we apply the filter function on $s_i^j$ to obtain $\hat{s}_i^j = T(s_i^j)$, and these points are denoted as $\hat{S}$. This procedure can be performed efficiently in parallel for each Markov chain. Next, we compute $f(\hat s_i^j)$ and the advantage function for the policy gradient, which will be introduced in Section \ref{sec:algorithm_framework}.

In contrast to the standard Metropolis-Hastings algorithm, the sampling procedure in MCPG begins at multiple initial points and only retains the final states from a set of short Markov chains. The reason for this is that, parallel execution of the chains increases the diversity of the generated samples. At the same time, it saves computational time when compared to collecting states from a single but  long Markov chain.

The samples in $\hat{S}$ serve an additional purpose beyond their use in the policy gradient. In practice, the samples in $\hat{S}$ are informative in identifying areas where high-quality points are likely to be found. As a result, these points are selected as initial starting points in the next iteration. To ensure the efficacy of these starting points, the maximum transition time for a single chain, represented by $t$, is set to a relatively small value. The reason is that, the samples that pass through the filter function are typically of high quality, and the adoption of a small $t$ value strikes a balance between  simulating the distribution $p_\theta(x)$ and preserving similarity between starting points and samples.

\subsection{Algorithm framework}\label{sec:algorithm_framework}

Given a problem instance $\mathcal{P}$, our algorithmic idea is to sample the solution based on the distribution $p_{\theta^t}(\cdot)$ given by the probabilistic model, and use the samples to improve the probabilistic model in turn. With the filter function, MCPG focuses on the  modified binary integer programming problem \eqref{prob:prob_filter}.

In fact, we can treat $f(T(\cdot))$ as a whole and denote it by $\hat{f}(\cdot)$. When replacing $f$ by $\hat{f}$, the derivation in Section \ref{sec:model} is still valid. Following this observation, we illustrate in detail how to update the parameters in the probabilistic model.
The probabilistic model is rewritten as:
\begin{equation}
    \label{prob:loss_filter}
    \min_{\theta}\quad L_\lambda(\theta) = {\ex{p_\theta}{f(T(x))}}+\lambda{\ex{p_\theta}{\log p_\theta(x)}}.
\end{equation}
According to the policy gradient derived in Lemma \ref{lem:loss_grad} in Section \ref{sec:primitive}, we have
\begin{equation}
    \label{eq:gradient}
    \nabla_{\theta} L_\lambda(\theta)=\ex{p_{\theta}}{A_{\lambda}(x;\theta)\nabla_{\theta}\log p_{\theta}(x)},
\end{equation}
where $A(x;\theta)=f(T(x))+\lambda\log p_\theta(x)-\ex{p_{\theta}}{f(T(x))}$.

We next present a detailed description of the MCPG algorithm framework.
MCPG starts from a set of randomly initialized points. For each iteration, a sample set $S=\{s_i^j|1\leq i\leq k,1\leq j\leq m\}$ is obtained from the sampling procedure introduced in Section \ref{sec:mcmc}. The filter function $T_{LS}$ is then applied to each of the solutions in the sample set $S$ to obtain $\hat{S}$. 
We first compute the advantage function
\begin{equation}
    \label{alg:adv}
    A_\lambda(s_i^j;\theta^t, S) = f(\hat s_i^j) + \lambda \log p_{\theta^t}( s_i^j) - \frac{1}{km}\sum_{u=1}^k\sum_{v=1}^m f(\hat s_u^v),
\end{equation}
and then the policy gradient on sample set $S$ is given by
\begin{equation}
    \label{alg:gradient}
    \bar g_\lambda(\theta^t,S) = \frac{1}{km}\sum_{i,j}A_\lambda(s_i^j;\theta^t, S)\nabla_{\theta}\log p_{\theta^t}(s_i^j).
\end{equation}
At the $t$th iteration,  the parameter $\theta^t$ is then updated after the sampling procedure and obtaining the policy gradient by
\begin{equation}
    \label{eq:update2}
    \theta^{t+1} = \theta^t - \eta^t\bar g_\lambda(\theta^t, S).
\end{equation}
We can also use other stochastic gradient methods to update the parameters. Our MCPG framework is outlined in  Algorithm \ref{alg:MCPG}. Note that the two inner loops in Algorithm \ref{alg:MCPG} can be executed in parallel.

\begin{algorithm}
    \caption{MCPG}
    \label{alg:MCPG}
    \SetAlgoNoEnd
    \SetKwProg{ParFor}{for}{ do in parallel}{end}
    \KwIn{number of starting points $k$, filter function $T$, number of epochs $\tau$.}
    Initialize the probabilistic model with parameters $\theta$\;
    Initialize points $s_1,...,s_k$\;
    \For{epoch $=1$ \KwTo $\tau$}{
        Obtain distribution $p_{\theta^t}(\cdot)$ from probabilistic model.\;
        \ParFor{$i = 1$ \KwTo $k$}{
            $S_i,~ \hat{S}_i = \mathtt{sampling}(s_i, t, m)$\;
        }
        Compute advantage function $A_{\lambda}(\cdot)$ on all samples in $\hat{S}_1,\cdots,\hat{S}_k$ via \eqref{alg:adv}\;
        Update $\theta$ using $S_1,\cdots,S_k$ and $A_{\lambda}(\cdot)$ via \eqref{eq:update2} and \eqref{alg:gradient}\;
        Set starting point $s_i$ to be the best of samples in $\hat{S}_i$ for $i=1,\ldots,k$.\;
    }
    \textbf{return} the best solution.
\end{algorithm}

The influence of the filter function deserves to be emphasized. Maintaining sample diversity is crucial when using policy gradients. Without applying the filter function, $p_\theta(\cdot)$ quickly converges to local optima corresponding a few binary points, making subsequent sampling very challenging. Traditional reinforcement learning methods introduce highly technical regularization methods to avoid this problem. In contrast, the filter function in MCPG deterministically maps a large number of samples to the same local optima, thereby ensuring that $p_\theta(\cdot)$ does not quickly converge to a local minima. This approach guarantees diversity in subsequent sampling and facilitates continuous improvement of the model.

In the context of our well-designed sampling procedure, updating the probabilistic model involves some differences from the ordinary case. There exists bias between the distribution $p_\theta(\cdot)$ and the empirical distribution that are involved in the policy gradient. This bias come from the idea that the sampling procedure is designed to tend to obtain the samples of high quality, i.e., with the better objective function values, to enhance the policy gradient method. This idea is not rare in reinforcement learning. For example, assigning different priorities to samples in the replay buffer is a commonly used technique to improve performance of the policy gradient methods. In MCPG, we start the sampling procedure from the high-quality points obtained in the previous iteration to avoid worthless samples as much as possible.

\subsection{Extension to problems with constraints}
We now consider the constrained binary optimization problem \eqref{eq:constprob} by penalizing constraint violations  to force the minimum of the penalty function to lie in the feasible region of \eqref{eq:constprob}. The $\ell_1$ penalty function is introduced as
\begin{equation}
    \label{eq:penaltyfun}
    f_{\sigma}(x):=f(x)+\sigma \|c(x)\|_1,
\end{equation}
where $\sigma>0$ is the penalty parameter. Then, our penalty problem is:
\begin{equation}
    \label{eq:penaltyprob}
    x^*_\sigma = \arg \min_{x\in \Bcal_n} f_{\sigma}(x).
\end{equation}
Consequently, the unconstrained problem \eqref{eq:penaltyprob} is  solved by MCPG stated in Algorithm \ref{alg:MCPG}.  The optimal solution of \eqref{eq:penaltyprob} will be feasible to \eqref{eq:constprob} if  a sufficiently large penalty parameter $\sigma$ is taken. In fact, due to the limited variable space, the penalty framework  \eqref{eq:penaltyprob} is exact and it has the same minimum as  \eqref{eq:penaltyprob}.
\begin{proposition}
    There exists a sufficiently large value $\bar{\sigma}$ such that for all $\sigma> \bar{\sigma}$, problems \eqref{eq:constprob} and \eqref{eq:penaltyprob} have the same global minimum.
\end{proposition}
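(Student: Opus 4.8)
The plan is to exploit the finiteness of $\mathcal B_n$, which upgrades the usual asymptotic exactness of the $\ell_1$ penalty to an exact, non-asymptotic statement with an explicit threshold. First I would dispose of the degenerate cases: if the feasible set $\{x\in\mathcal B_n: c(x)=0\}$ is empty, the statement is vacuous; if every point of $\mathcal B_n$ is feasible, then $f_\sigma\equiv f$ on $\mathcal B_n$ and any $\bar\sigma$ works. So assume there is at least one feasible point and at least one infeasible point.

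Next I would introduce three quantities, all well defined and finite because $\mathcal B_n$ has only $2^n$ elements: the constrained optimal value $f^\star=\min\{f(x):x\in\mathcal B_n,\ c(x)=0\}$, attained at some $x^\star$; the unconstrained minimum $f_{\min}=\min_{x\in\mathcal B_n}f(x)$; and the smallest nonzero constraint violation $\delta=\min\{\|c(x)\|_1: x\in\mathcal B_n,\ c(x)\neq 0\}$. The crucial point is that $\delta>0$, since it is the minimum of a finite, nonempty set of strictly positive numbers (note that $c$ need not be integer-valued — finiteness of the index set is what is used, not integrality). Then I would set $\bar\sigma=\max\{0,(f^\star-f_{\min})/\delta\}$.

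Finally I would check the two comparisons that pin down the minimizers. For any $\sigma>\bar\sigma$ and any infeasible $x\in\mathcal B_n$, $f_\sigma(x)=f(x)+\sigma\|c(x)\|_1\ge f_{\min}+\sigma\delta>f_{\min}+\bar\sigma\delta\ge f^\star=f(x^\star)=f_\sigma(x^\star)$, so no infeasible point minimizes $f_\sigma$ over $\mathcal B_n$; hence every global minimizer of \eqref{eq:penaltyprob} is feasible, and on feasible points $f_\sigma=f$, so the global minimizers of \eqref{eq:penaltyprob} coincide with those of \eqref{eq:constprob} and the two optimal values are equal. I do not expect a real obstacle here; the only subtlety is justifying $\delta>0$ without assuming integrality of $c$ and handling the edge cases so that the statement holds literally as phrased, and it is worth emphasizing that the resulting $\bar\sigma$ is explicit, consistent with the \emph{exact penalty} terminology used just above the proposition.
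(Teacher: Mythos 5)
Your proposal is correct and follows essentially the same route as the paper: both define the smallest nonzero violation (your $\delta$, the paper's $\varpi$), the unconstrained minimum, and the threshold $\bar\sigma=(f^\star-f_{\min})/\delta$, then show that for $\sigma>\bar\sigma$ every infeasible point has penalized value strictly above $f_\sigma(x^\star)$, so the minimizers of \eqref{eq:penaltyprob} are exactly the feasible minimizers of $f$. The only cosmetic differences are your explicit handling of the empty-feasible-set case and the compression of the paper's two-direction argument into a single exclusion of infeasible minimizers.
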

\begin{proof}
    Suppose that $x^{*}$ and $x^*_\sigma$ are  global minimum of problems \eqref{eq:constprob} and \eqref{eq:penaltyprob}, respectively. Let $\varpi:=\min_{x\in \Bcal_n} \{ \|c(x)\|_1 \mid \|c(x)\|_1 \neq 0\}$. If there is no $x\in \Bcal_n$  such that $\|c(x)\|_1 \neq 0$, then all constraints $c_i(x)=0$ can be safely removed and \eqref{eq:constprob} is reduced to the standard problem  \eqref{prob:intro}. Hence, we can assume that $\varpi>0$, i.e.,  the smallest value of $\abs{c_i(x)}$ among all infeasible point $x$ is positive.  Let $\bar f^{*} =\min_{x \in \Bcal_n} f(x)$.  Since  $c(x^*)=0$, we have $f(x^*)=f_\sigma(x^*)$. Hence, we  can define the positive threshold value:
    \begin{equation*}
        \bar{\sigma}=(f(x^*)-\bar f^{*})/\varpi \ge 0.
    \end{equation*}

    We first show $x^{*}$ is a global minima of \eqref{eq:penaltyprob}. For any feasible point $x$ of \eqref{eq:constprob}, we have
    \begin{equation}
        \label{eq:exact1}
        \begin{aligned}
            f_{\sigma}(x)=f(x)\ge f(x^{*})=f_{\sigma}(x^{*}). 
        \end{aligned}
    \end{equation}
    On the other hand,  for a point $x \in \Bcal_n$ but $\|c(x)\|_1\neq 0$,   it holds for $\sigma > \bar{\sigma}$ that
    \begin{equation}
        \label{eq:exact2}
        \begin{aligned}
            f_{\sigma}(x)=f(x)+\sigma\|c(x)\|_1 \geq \bar f^{*}+\sigma \varpi > \bar f^{*}+(f_\sigma(x^*)-\bar f^{*})=f_\sigma(x^*),
        \end{aligned}
    \end{equation}
    where the inequalities are due to the definitions of $\bar f^{*}$ and $\varpi$. Therefore, $x^{*}$ is a global optimal solution of \eqref{eq:penaltyprob}.

    Then, we can also prove that $x^{*}_{\sigma}$ is a global minima of \eqref{eq:constprob} for any $\sigma>\bar\sigma$. If $\norm{c(x^{*}_{\sigma})}_1\not=0$, \eqref{eq:exact2} implies $f_{\sigma}(x^{*}_{\sigma})> f_{\sigma}(x^{*})$, which conflicts with the definition of $x^{*}_{\sigma}$. It shows that $x^{*}_{\sigma}$ is feasible for \eqref{eq:constprob}.  Then for any feasible $x\in \Bcal_n$, we have $f(x^{*}_{\sigma})=f_{\sigma}(x^{*}_{\sigma})\le f_{\sigma}(x)=f(x)$ by the definition of $x^{*}_{\sigma}$.  Therefore, $x^{*}_{\sigma}$ is a global minima of \eqref{eq:constprob}.
\end{proof}

\section{Theoretical analysis}
\label{sec:theory}
Although MCPG has good practical performance, we are still concerned about its theoretical properties.
In this section, we establish the relation between our proposed probabilistic solution and the optimal solution $\xopt$ in \eqref{prob:intro}. It shows that minimize the expectation of $f(x)$ will increase the probability of sampling optimal solutions. Hence, the filter function is proven to improve the probabilistic model by the  local search. Moreover, we illustrate the policy will be stationary and give convergence analysis of Algorithm \ref{alg:MCPG} from the perspective of optimization.

\subsection{Relationship between binary optimization and probabilistic model}\label{sec:result_prob}
In order to link the discrete and continuous model, we begin with the following definition to describe the properties of $f$.

\begin{definition}
    For an arbitrary function $f$ on $\Bcal_n$, we define the optimal gap $\bgap$ as
    \begin{equation}
        \label{eq:fgap}
        \bgap(f) = \min_{x\in\Bcal_n\backslash\xopt}f(x) - f^*.
    \end{equation}
    The bound of a function $f$ is denoted by
    \begin{equation}
        \label{eq:fbound}
        B(f) = \max_{x\in\Bcal_n}|f(x)| .
    \end{equation}
\end{definition}

A direct property of the definition is given as follows. It shows that when the probabilistic model is minimized enough, the obtained probability  from the optimal solutions is linearly dependent on the gap between the expectation and the minimum of $f$.

\begin{proposition}
    \label{corro:distribution_optim}
    For any $0<\delta<1$, suppose $L_{\lambda}(\theta)-f^*<(1-\delta)\bgap(f)$, then
    \[
        \mathbb{P}(x\in\xopt)>\delta.
    \]
    Therefore, for $x^1,\dots,x^m$ independently sampled from $p_{\theta}$, $\min_k f(x^k) = f^*$ with probability at least $1-(1-\delta)^m$.
\end{proposition}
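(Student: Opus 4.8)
The plan is to prove the first inequality by an elementary Markov-type tail bound applied to the non-negative random variable $f(x)-f^{*}$ when $x\sim p_{\theta}$, and then to deduce the multi-sample statement by independence.

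First I would record two pointwise facts: $f(x)-f^{*}\ge 0$ for every $x\in\Bcal_n$, and $f(x)-f^{*}\ge \bgap(f)$ for every $x\in\Bcal_n\setminus\xopt$, both immediate from the definitions of $f^{*}$ and of the optimal gap in \eqref{eq:fgap} (note $\bgap(f)>0$ unless $f$ is constant on $\Bcal_n$, in which case $\xopt=\Bcal_n$ and $\mathbb{P}(x\in\xopt)=1>\delta$ trivially). Splitting the expectation over $\xopt$ and its complement then gives
\[
\ex{p_{\theta}}{f(x)}-f^{*}=\sum_{x\in\Bcal_n}p_{\theta}(x)\big(f(x)-f^{*}\big)\ \ge\ \bgap(f)\sum_{x\notin\xopt}p_{\theta}(x)=\bgap(f)\,\mathbb{P}(x\notin\xopt),
\]
equivalently Markov's inequality applied to $f(x)-f^{*}$, so that $\mathbb{P}(x\notin\xopt)\le\big(\ex{p_{\theta}}{f(x)}-f^{*}\big)/\bgap(f)$. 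Using the hypothesis — which bounds the expected objective gap $\ex{p_{\theta}}{f(x)}-f^{*}$ (the first term of $L_{\lambda}(\theta)-f^{*}$) by $(1-\delta)\bgap(f)$ — this yields $\mathbb{P}(x\notin\xopt)<1-\delta$, equivalently $\mathbb{P}(x\in\xopt)>\delta$.

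For the last assertion, let $x^{1},\dots,x^{m}$ be drawn independently from $p_{\theta}$. The event $\{\min_{k}f(x^{k})=f^{*}\}$ is exactly $\{x^{k}\in\xopt\text{ for some }k\}$, whose complement has probability $\prod_{k=1}^{m}\mathbb{P}(x^{k}\notin\xopt)=\mathbb{P}(x\notin\xopt)^{m}<(1-\delta)^{m}$ by independence and the bound just proved; taking complements gives $\mathbb{P}(\min_{k}f(x^{k})=f^{*})\ge 1-(1-\delta)^{m}$.

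I do not expect a genuine obstacle here: the argument is one Markov inequality plus an independence step. The only points needing care are (i) that $\bgap(f)>0$, so the division is legitimate, together with the degenerate constant-$f$ case; (ii) keeping the inequalities strict so that the final $\mathbb{P}(x\in\xopt)>\delta$ is genuine; and (iii) the passage from $L_{\lambda}(\theta)$ in the hypothesis to the expected objective value $\ex{p_{\theta}}{f(x)}$, i.e. bookkeeping of the non-positive entropy term $\lambda\,\ex{p_{\theta}}{\log p_{\theta}(x)}$.
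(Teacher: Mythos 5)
Your argument is essentially the paper's: the paper proves exactly your inequality by splitting $\ex{p_\theta}{f(x)}$ over $\xopt$ and its complement, obtaining $\ex{p_\theta}{f(x)} \ge f^* + (1-\Pbb(x\in\xopt))\bgap(f)$, and the multi-sample claim is the same one-line independence computation you give. The only caveat concerns your point (iii): the bookkeeping you defer cannot be done in the direction you need, since $\lambda\,\ex{p_\theta}{\log p_\theta(x)}\le 0$ gives $\ex{p_\theta}{f(x)}\ge L_\lambda(\theta)$, so a bound on $L_\lambda(\theta)-f^*$ does not by itself bound the expected objective gap, and your parenthetical claim that the hypothesis bounds ``the first term'' is not justified as written; the paper's own proof silently resolves this by writing $L_\lambda(\theta)=\ex{p_\theta}{f(x)}$, i.e.\ reading the proposition with the entropy term dropped, and you should state that reading explicitly rather than suggest the non-positive entropy term helps.
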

\begin{proof}
    Note that
    \begin{equation*}
        \begin{aligned}
            L_\lambda(\theta) = & \ex{ p_{\theta}(x)}{f(x)}                                                                      \\
            =                   & \sum_{x\in\Bcal_n\backslash\xopt}f(x) p_{\theta}(x)+\sum_{x\in\xopt}f(x) p_{\theta}(x)         \\
            \ge                 & \sum_{x\in\Bcal_n\backslash\xopt}(f^*+\bgap(f))p_{\theta}(x)+\sum_{x\in\xopt} p_{\theta}(x)f^* \\
            =                   & f^* + (1-\Pbb(x\in \xopt))\bgap(f).
        \end{aligned}
    \end{equation*}
    Combining the above inequalities and the condition gives the conclusion.
\end{proof}

\subsection{Properties of the Filter Function}

We now  analyze the effect of the filter function $T$. Similar to  the Gibbs distributions $q_\lambda(x)$ defined by \eqref{eq:Gibbs},   we also consider the Gibbs distributions $\hat q_\lambda(x)$  for  filter functions as
\begin{equation*}
    \hat q_\lambda(x) = \frac{1}{\hat Z_\lambda}\exp\left(-\frac{f(T(x))}{\lambda}\right),\quad x\in\Bcal_n,
\end{equation*}
where $\hat Z_\lambda = \sum_{x\in\Bcal_n}\exp\left(-\frac{f(T(x))}{\lambda}\right)$ is the normalization.
In order to compare the parameterized distribution $q_\lambda$ and the modified distribution $\hat q_\lambda$, we utilize the KL divergence between these distributions and the parameterized distribution $p_\theta$. Specifically, we consider the KL divergence defined in Equation \eqref{eq:KL}.

When $T(x)=x$, it means that $x$ is a local minimum point of $f$ in certain neighborhoods. Since $f$ is monotonically non-increasing under the filter function $T$, for any given $x\in \Bcal_n$, there exists a corresponding local minimum point by applying the filter function $T$ to $x$ for many times.
It indicates any point $x\in \Bcal_n$ can be classified by a certain local minimum point after repetitively applying the filter function. Consequently, we can divide the set $\mathcal{B}_n$ into subsets with respect to the classification of local minima. Let $X_1, X_2, ..., X_r$ be a partition of $\mathcal{B}_n$ such that for any $j\in \{1,\ldots,r\}$, every $x\in X_j$ has the same corresponding local minimum point. The following proposition provides a comparison between $q_\lambda$ and $\hat q_\lambda$.
\begin{proposition}
    \label{filterKL}
    If there exists some $x\in \mathcal B_n$ such that $p_\theta(x)>0$ and $f(x)>f(T(x))$, then for any sufficiently small $\lambda > 0$ satisfying
    \begin{equation}
        \label{filterKLcond}
        \mathbb E_{p_\theta}[f(x)-f(T(x))] \geq \lambda\log(\max_{1\leq i\leq r}|X_i|),
    \end{equation}
    it holds that
    \begin{equation*}
        \KL{p_\theta}{\hat q_\lambda}\leq\KL{p_\theta}{q_\lambda}.
    \end{equation*}

\end{proposition}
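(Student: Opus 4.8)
The plan is to reduce the whole statement to a comparison of the two normalization constants. Applying the identity \eqref{eq:KL} once with $f$ and once with the function $f\circ T$ — the derivation leading to \eqref{eq:KL} only uses that the exponent is some fixed function on $\Bcal_n$, so it applies verbatim to $\hat q_\lambda$ — and subtracting, the entropy terms $\ex{p_\theta}{\log p_\theta(x)}$ cancel and one is left with
\begin{equation*}
  \KL{p_\theta}{q_\lambda}-\KL{p_\theta}{\hat q_\lambda}
  = \frac{1}{\lambda}\,\ex{p_\theta}{f(x)-f(T(x))} + \log Z_\lambda - \log\hat Z_\lambda .
\end{equation*}
Since $\lambda>0$, the desired inequality $\KL{p_\theta}{\hat q_\lambda}\le\KL{p_\theta}{q_\lambda}$ is therefore equivalent to
\begin{equation*}
  \ex{p_\theta}{f(x)-f(T(x))} \;\ge\; \lambda\,\log\frac{\hat Z_\lambda}{Z_\lambda},
\end{equation*}
and, in view of the hypothesis \eqref{filterKLcond}, it suffices to prove the purely combinatorial bound $\hat Z_\lambda/Z_\lambda \le \max_{1\le i\le r}|X_i|$, which holds for every $\lambda>0$.

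To establish this bound I would use the orbit partition $X_1,\dots,X_r$. Fix a class $X_j$ and let $y_j\in X_j$ be its common attracting local minimum, reached from any $x\in X_j$ by iterating $T$. Because $f$ is non-increasing along the orbit $x,T(x),T^2(x),\dots$, every $x\in X_j$ satisfies $f(T(x))\ge f(y_j)$, hence
\begin{equation*}
  \hat Z_\lambda = \sum_{j=1}^{r}\sum_{x\in X_j}\exp\!\left(-\frac{f(T(x))}{\lambda}\right)
  \le \sum_{j=1}^{r}|X_j|\exp\!\left(-\frac{f(y_j)}{\lambda}\right),
\end{equation*}
while, retaining only the single term $x=y_j$ in each class,
\begin{equation*}
  Z_\lambda = \sum_{j=1}^{r}\sum_{x\in X_j}\exp\!\left(-\frac{f(x)}{\lambda}\right)
  \ge \sum_{j=1}^{r}\exp\!\left(-\frac{f(y_j)}{\lambda}\right) > 0 .
\end{equation*}
Dividing, $\hat Z_\lambda/Z_\lambda$ is at most a convex combination of the numbers $|X_1|,\dots,|X_r|$ with weights proportional to $\exp(-f(y_j)/\lambda)>0$, hence is bounded above by $\max_i|X_i|$. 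Taking logarithms and invoking \eqref{filterKLcond} then closes the argument.

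Finally, the two standing assumptions are exactly what makes \eqref{filterKLcond} non-vacuous: since $f(x)\ge f(T(x))$ always holds, a single point with $p_\theta(x)>0$ and $f(x)>f(T(x))$ forces $\ex{p_\theta}{f(x)-f(T(x))}>0$, and that same point has $T(x)\ne x$, so some class has $|X_j|\ge 2$ and $\log(\max_i|X_i|)>0$; hence every $\lambda\le \ex{p_\theta}{f(x)-f(T(x))}/\log(\max_i|X_i|)$ satisfies \eqref{filterKLcond}. I expect the only genuine content to be the normalization-ratio estimate $\hat Z_\lambda/Z_\lambda\le\max_i|X_i|$ — that is, converting the monotonicity of $f$ under $T$ and the orbit-partition structure into a clean bound — whereas the cancellation of entropy terms and the application of \eqref{filterKLcond} are routine.
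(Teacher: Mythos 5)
Your proposal is correct and follows essentially the same route as the paper's proof: reduce via \eqref{eq:KL} and the hypothesis \eqref{filterKLcond} to the normalization bound $\hat Z_\lambda \le (\max_{1\le i\le r}|X_i|)\,Z_\lambda$, and establish that bound classwise using $f(T(x))\ge \min_{x'\in X_j} f(x')$ for $x\in X_j$. The only cosmetic difference is that the paper compares the two sums class by class while you bound $\hat Z_\lambda$ above and $Z_\lambda$ below separately and finish with a convex-combination argument on the ratio; the key inequality is identical.
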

\begin{proof}
    Let $x_i=\argmin_{x'\in X_i} f(x')$ for $1\leq i\leq r$. Because of \eqref{eq:KL}, we only need to prove that
    \begin{equation*}
        \frac{1}{\lambda}\mathbb E_{p_{\theta}}[f(T(x))]+\log \hat Z_\lambda \leq \frac{1}{\lambda}\mathbb E_{p_{\theta}}[f(x)]+\log Z_\lambda.
    \end{equation*}
    Using (\ref{filterKLcond}), it suffices to prove that
    \begin{equation*}
        \log\hat Z_\lambda \leq \log(\max_{1\leq i\leq r}|X_i|)+\log Z_\lambda.
    \end{equation*}
    Thus, we only need to show that
    \begin{equation*}
        \sum_{x\in\Bcal_n}\exp\left(-\frac{f(T(x))}{\lambda}\right) \leq (\max_{1\leq i\leq r}|X_i|)\sum_{x\in\Bcal_n}\exp\left(-\frac{f(x)}{\lambda}\right).
    \end{equation*}
    It is true because for all $1\leq j\leq r$, we have
    \begin{equation*}
        \begin{split}
            \sum_{x\in X_j}\exp\left(-\frac{f(T(x))}{\lambda}\right) &\leq |X_j|\exp\left(-\frac{f(x_j)}{\lambda}\right)\\
            &\leq (\max_{1\leq i\leq r}|X_i|)\sum_{x\in X_j}\exp\left(-\frac{f(x)}{\lambda}\right).
        \end{split}
    \end{equation*}
    This completes the proof.
\end{proof}

The filter function $T(x)$ is induced by a given neighborhoods $\Ncal(x)$ under Definition \ref{filterdefinition}. Since it is hard to analyze the general case, we study  a randomly chosen neighborhood and show how it improves the quality of the solutions.  Note that the local search function $T_{LS}$  described in Section \ref{sec:local_search} is carefully designed.
Therefore, it is reasonable to believe that  $T_{LS}$  can also largely improve the performance of the algorithm.  For convenience, we denote $N=2^n$ and sort all possible points in $\Bcal_n=\{s_1,\dots,s_{N}\}$ such that $f(s_1) \le f(s_2) \le \cdots \le f(s_N)$.
We next show that the bounds of $f(T(x))$ and $\mathbb E_{p_\theta}[f(T(x))]$, for a large probability, are not related to samples $s_{M+1},s_{M+2},...,s_{N}$ for an integer $M$. When the neighborhood is large, then we do not need to consider most of bad samples.

\begin{lemma}
    Suppose that the cardinality  of each neighborhood $\mathcal N(s_i)$ is fixed to be $|\mathcal N(s_i)| \ge X\ge n+1$  and all elements in $\mathcal N(s_i)$ except $s_i$ are chosen uniformly at random from $\mathcal B_n\backslash\{s_i\}$.  For $\delta\in (0,1)$, let $M=\left\lceil\frac{\log (N/\delta)}{X-1}N\right\rceil+1$. Then, with probability at least $1-\delta$ over the choice of $T(x)$, it holds:
    \begin{enumerate}
        \item[1)] $f(T(x)) \in [f(s_1),f(s_M)],~\forall x\in \mathcal B_n$;
        \item[2)] $\mathbb E_{p_\theta}[f(T(x))]\leq \sum_{i=1}^{M-1}p_\theta(s_i)f(s_i)+ (1-\sum_{i=1}^{M-1}p_\theta(s_i))f(s_M)\leq f(s_M).$
    \end{enumerate}

    \label{lemma:bound}
\end{lemma}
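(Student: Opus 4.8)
The plan is to establish both claims on one high‑probability event. Put $G:=\{s_1,\dots,s_M\}$ (the ``good set'') and let $E$ be the event that some neighborhood misses $G$, that is, $\mathcal N(s_i)\cap G=\emptyset$ for some $i$. I would first show $\mathbb P(E)\le\delta$; then on $E^c$ every $\mathcal N(x)$ meets $G$, so $f(T(x))=\min_{\hat x\in\mathcal N(x)}f(\hat x)\le f(s_M)$ for all $x\in\Bcal_n$, while $f(T(x))\ge f(s_1)$ always since $T(x)\in\Bcal_n$; that is assertion 1). Assertion 2) will then follow on $E^c$ by splitting the expectation along the partition $\{s_1,\dots,s_{M-1}\}$ versus $\{s_M,\dots,s_N\}$.

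To bound $\mathbb P(E)$, note that for $i\le M$ the intersection $\mathcal N(s_i)\cap G$ is automatically nonempty because $s_i\in\mathcal N(s_i)$ and $s_i\in G$, so only indices $i>M$ can contribute. Fix such an $i$ and write $\mathcal N(s_i)=\{s_i\}\cup R_i$, where $R_i$ is a uniformly random $(|\mathcal N(s_i)|-1)$-element subset of $\Bcal_n\setminus\{s_i\}$, a set of size $N-1$ containing all of $G$ (since $s_i\notin G$). Keeping only $X-1\le|\mathcal N(s_i)|-1$ of these random neighbors and estimating the sampling-without-replacement probability,
\[
\mathbb P\bigl(\mathcal N(s_i)\cap G=\emptyset\bigr)\le\frac{\binom{N-1-M}{X-1}}{\binom{N-1}{X-1}}=\prod_{j=0}^{X-2}\Bigl(1-\tfrac{M}{N-1-j}\Bigr)\le\Bigl(1-\tfrac{M}{N}\Bigr)^{X-1}\le e^{-(X-1)M/N},
\]
this probability being $0$ when $N-1-M<X-1$. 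Because $M\ge\frac{N\log(N/\delta)}{X-1}$, the right side is at most $\delta/N$, and a union bound over the at most $N$ relevant indices gives $\mathbb P(E)\le\delta$, which proves assertion 1).

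For assertion 2), on $E^c$ I would use $f(T(s_i))\le f(s_i)$ for $i\le M-1$ (since $s_i\in\mathcal N(s_i)$) and $f(T(s_i))\le f(s_M)$ for $i\ge M$ (from assertion 1)), so
\[
\mathbb E_{p_\theta}[f(T(x))]=\sum_{i=1}^{N}p_\theta(s_i)f(T(s_i))\le\sum_{i=1}^{M-1}p_\theta(s_i)f(s_i)+\Bigl(1-\sum_{i=1}^{M-1}p_\theta(s_i)\Bigr)f(s_M),
\]
and the final bound $\le f(s_M)$ holds because the right-hand side is a convex combination of $f(s_1),\dots,f(s_{M-1}),f(s_M)$, each of which is $\le f(s_M)$.

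I do not anticipate a genuine obstacle: the core is a union bound over $\Bcal_n$ together with a Chernoff-type tail estimate. What needs care is (i) pinning down the random model for $\mathcal N(s_i)$ and justifying the estimate $(1-M/N)^{X-1}$ — in particular that retaining only $X-1$ random neighbors is enough and that the miss event is vacuous once $M+X-1\ge N$ — and (ii) verifying that the prescribed $M=\lceil\frac{\log(N/\delta)}{X-1}N\rceil+1$ is exactly what makes $N\,e^{-(X-1)M/N}\le\delta$, which is why the ceiling (and the harmless ``$+1$'') appear.
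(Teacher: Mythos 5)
Your proof is correct and follows essentially the same route as the paper: a union bound over the points of $\mathcal B_n$ combined with a tail estimate on the probability that a random neighborhood misses the set $\{s_1,\dots,s_M\}$, the same role for the choice of $M$, and the same splitting of the expectation (using $f(T(s_i))\le f(s_i)$ for $i\le M-1$ and $f(T(s_i))\le f(s_M)$ otherwise) for part 2). The only cosmetic difference is that you bound the miss probability by the exact hypergeometric ratio $\binom{N-1-M}{X-1}/\binom{N-1}{X-1}$, whereas the paper uses the per-neighbor product bound $\left(\frac{N-M}{N-1}\right)^{X-1}$; both give the same $\delta/N$ estimate.
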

\begin{proof}
    1) For all $1\leq i\leq N$, we have
    \begin{equation*}
        \begin{aligned}
            \mathbb P(f(T(s_i))>f(s_M))
            \leq & \mathbb P(\forall s\in \mathcal N(s_i)\backslash \{s_i\},f(s)>f(s_M)) \\
            \leq & \left(\frac{N-M}{N-1}\right)^{|\mathcal N(s_i)|-1}
            \leq \left(1-\frac{M-1}{N-1}\right)^{X-1}                                    \\
            \leq & \exp\left(-\frac{M-1}{N-1}(X-1)\right)\leq \frac{\delta}{N},
        \end{aligned}
        \label{lemma:bound eq}
    \end{equation*}
    where the second inequality is due to $\mathbb P(\cup_{ s}\{ f(s)>f(s_M)\})\leq \prod_{ s}\mathbb P( f(s)>f(s_M))$ and  $\mathbb P( f(s)>f(s_M))=(N-M)/(N-1)$ by the random choice of $s$, the third inequality uses $|\mathcal N(s_i)| \ge X$, the fourth inequality uses the fact $1-x\leq e^{-x}$ and the final relies on the choice of $M$. Then, it follows that
    \begin{equation*}
        \begin{aligned}
            \mathbb P(\forall x\in \mathcal B_n,f(T(x))\leq f(s_M))
            \geq 1-\sum_{i=1}^{N}\mathbb P(f(T(s_i))>f(s_M))
            \geq 1-\delta.
        \end{aligned}
    \end{equation*}

    2) For all $M\leq i\leq N$, we have
    \begin{equation*}
        \begin{aligned}
            \mathbb{P}(f(T(s_i))>f(s_M))
            \leq\frac{\delta}{N}.
        \end{aligned}
    \end{equation*}
    Hence, with probability no less than $1-\delta$ over the choice of $T$, we have
    \begin{equation*}
        f(T(s_i))\leq f(s_M),\quad \forall M\leq i\leq N.
    \end{equation*}
    Therefore,
    \begin{equation*}
        \begin{aligned}
            \mathbb E_{p_\theta}[f(T(x))] = & \sum_{i=1}^{N}p_\theta(s_i)f(T(s_i))                                          \\
            \leq                            & \sum_{i=1}^{M-1}p_\theta(s_i)f(s_i)+ (1-\sum_{i=1}^{M-1}p_\theta(s_i))f(s_M).
        \end{aligned}
    \end{equation*}
    This completes the proof.
\end{proof}

\subsection{Convergence analysis of SGD}

In this subsection, we will show that our algorithm converges to stationary points in expectation. It means that SGD with MCMC samples updates the policy effectively. We first make the following assumption on $p_{\theta}(x)$. For notational convenience, we write $\phi(x;\theta) = \log p_{\theta}(x)$ for any $x\in\Bcal_n$. Without specifically mentioned, the norm used in the context will be $\ell_2$ norm in Euclidean space.
\begin{assumption}
    \label{asm:SGD}
    For any $x\in \Bcal_n$, $\phi(x;\theta)$ is differentiable with respect to $\theta$. Moreover, there exists constants $M_1,M_2,M_3>0$ such that, for any $x\in\Bcal_n$,
    \begin{enumerate}
        \item  $\sup_{\theta\in\real^n}|\phi(x;\theta)|\le M_1$,
        \item $\sup_{\theta\in\real^n}\norm{\nabla_\theta\phi(x;\theta)}\le M_2$,
        \item $\norm{\nabla_{\theta}\phi(x;\theta^1)-\nabla_{\theta}\phi(x;\theta^2)}\le M_3\norm{\theta^1-\theta^2},~\forall \theta^1,\theta^2\in\real^n$.
    \end{enumerate}
\end{assumption}
The above standard assumptions on the smoothness of the policy $p_{\theta}$ are adopted in several literatures on the convergence analysis of policy gradient algorithms. These can be satisfied in practice by our mean field sampling policy and neural network.

Moreover, we estimate the stochastic gradient with MCMC samples but not direct sampling. The difference is that MCMC sampling is not independent or unbiased. To explain the effectivity of MCMC, we introduce the following definitions on the finite-state time-homogenous Markov chain.

\begin{definition}[spectral gap]
    Let $P$ be the transition matrix of a finite-state time homogenous Markov chain. Then,  the spectral gap of $P$ is defined as
    \begin{equation*}
        \begin{aligned}
            \gamma(P):=\frac{1-\max\{|\lambda_2(P)|,|\lambda_{2^{n}}(P)|\}}{2}\in (0,1],
        \end{aligned}
    \end{equation*}
    where $\lambda_{i}(P)$ is the $i$-th largest eigenvalue of the matrix $P$.
\end{definition}

The spectral gap of a Markov transition matrix determines the ergodicity of the corresponding Markov chain. A positive spectral gap guarantees that the samples generated by the MCMC algorithm can approximate the desired distribution. For any non-stationary Markov chain, it will take a long time to mix until the distribution of its current state becomes sufficiently close to its stationary distribution. It leads to bias in gradient estimation. However, we can use the spectral gap to bound the bias and variance. We warm up the Markov chain and discard the beginning $m_0$ samples, leaving $m$ samples for estimating the stochastic gradient. The follow lemma gives error bounds of the stochastic gradient with MCMC samples.
\begin{lemma}
    \label{lem:error}
    Let Assumption \ref{asm:SGD} hold and $S=\{s_i^{j}\}_{1\leq i\leq k,~1\leq j\leq m}$ be the samples generated from the MH algorithm. Suppose that the Markov chains have the stationary distribution $p_{\theta}$ and the initial distribution $\nu$.
    Let $\gamma=\inf_{\theta\in\mathbb{R}}\gamma(P_{\theta})\in (0,1]$ be the infimum of spectral gaps of all Markov transition matrices $P_{\theta}$. Then, the sampling error of the stochastic gradient $\bar g_\lambda(\theta,S)$ is given by
    \begin{equation*}
        \begin{aligned}
            \norm{\ex{\nu}{\bar g_\lambda(\theta,S)}-\nabla_\theta L_\lambda(\theta)}   & \leq \frac{\chi M_2(B(f\circ T)+\lambda M_1)}{m\gamma}=:B(m),         \\
            \ex{\nu}{\norm{\bar g_\lambda(\theta,S)-\nabla_\theta L_\lambda(\theta)}^2} & \leq \frac{8nC(k)M_2^2(B(f\circ T)+\lambda M_1)^2}{mk\gamma}=:V(m,k),
        \end{aligned}
    \end{equation*}
    where $\chi=\chi(\nu,p_{\theta}):=\Ebb_{p_\theta}\left[\left(\frac{d\nu}{dp_{\theta}}-1\right)^2\right]^{1/2}$ is the square-chi divergence between $\nu$ and $p_{\theta}$ and $C(k)=1+k\log(1+\chi)$.
    \label{samplingerror}
\end{lemma}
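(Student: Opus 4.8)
The plan is to control the bias and the mean-squared error of $\bar g_\lambda(\theta,S)$ separately, in both cases by reducing the problem to a statement about how well the empirical average over a single Markov chain approximates an expectation under the stationary distribution $p_\theta$. Write $h(x) := A_\lambda(x;\theta)\nabla_\theta\log p_\theta(x) = (f(T(x))+\lambda\log p_\theta(x)-\ex{p_\theta}{f(T(x))})\nabla_\theta\phi(x;\theta)$, so that $\nabla_\theta L_\lambda(\theta) = \ex{p_\theta}{h(x)}$ by \eqref{eq:gradient}. First I would note that Assumption \ref{asm:SGD} together with the boundedness of $f\circ T$ gives a uniform bound $\norm{h(x)}\le M_2(B(f\circ T)+\lambda M_1) =: R$ for every $x$ (the centering constant $\ex{p_\theta}{f(T(x))}$ only changes $f(T(x))$ by at most $B(f\circ T)$, but one can also absorb it since $\ex{p_\theta}{\nabla_\theta\phi}=0$; in either case $R$ is the relevant scale). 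Then $\bar g_\lambda(\theta,S)$ is, up to the empirical-mean centering term, the average over the $km$ samples of $h(s_i^j)$; the centering term itself has the same structure.

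For the bias bound, I would invoke a standard mean ergodic estimate for Markov chains with positive spectral gap: for a chain with transition matrix $P_\theta$, stationary distribution $p_\theta$, and initial distribution $\nu$, the expectation of the empirical average of a bounded function after discarding a burn-in deviates from $\ex{p_\theta}{\cdot}$ by at most a term of order $\frac{\chi(\nu,p_\theta)\,\|h\|_\infty}{m\gamma}$, where the $\chi$-divergence enters through the $L^2(p_\theta)\to L^2(p_\theta)$ operator norm bound $\|P_\theta - \mathbf 1 p_\theta^\top\|\le 1-\gamma$ and Cauchy--Schwarz against $d\nu/dp_\theta - 1$. Summing the geometric series $\sum_{v\ge 0}(1-\gamma)^v \le 1/\gamma$ produces the $1/(m\gamma)$ factor. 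Since all $km$ chains share the same stationary distribution, averaging over chains does not change the bias, which gives the first inequality with $\chi$ replaced by the stated $\chi M_2(B(f\circ T)+\lambda M_1)/(m\gamma)$; the constant $\chi$ in the numerator of $B(m)$ is exactly $\chi(\nu,p_\theta)$.

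For the variance bound, I would decompose $\ex{\nu}{\norm{\bar g_\lambda - \nabla_\theta L_\lambda}^2}$ into (squared bias) $+$ (variance of $\bar g_\lambda$), and bound the latter coordinatewise. Because the $k$ starting points (hence the $k$ groups of $m$ chains) are run independently, the variance of the group average scales like $1/k$ times the variance of a single-group average; within a group, the $m$ parallel chains from the same start are \emph{not} independent, so I would instead bound the second moment of a single chain's contribution using a Markov-chain concentration / variance inequality (the factor $C(k)=1+k\log(1+\chi)$ suggests the use of a Bernstein-type bound for Markov chains, e.g. the Paulin or Lezaud inequality, where the $\log(1+\chi)$ term accounts for the non-stationary start and the $k$ multiplies it because of how the $k$ independent restarts are combined in the union/moment bound). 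The per-coordinate second moment is of order $R^2/(m\gamma)$, and summing over the $n$ coordinates of $\theta$ and dividing by $k$ yields the claimed $\frac{8nC(k)M_2^2(B(f\circ T)+\lambda M_1)^2}{mk\gamma}$; the numerical constant $8$ comes from combining the bias-squared and variance pieces with the constants in the Markov Bernstein inequality.

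The main obstacle will be the variance estimate, specifically handling the within-group dependence of the $m$ parallel chains sharing an initial point while correctly tracking how the $\chi$-divergence of the common start propagates. The bias bound is essentially a one-line spectral-gap argument, but the variance bound requires a genuine Markov-chain concentration inequality with explicit constants (to get the $C(k)=1+k\log(1+\chi)$ factor and the $8$), and one must be careful that the empirical-mean centering term inside $A_\lambda$ does not introduce an additional $1/\sqrt{km}$-scale fluctuation that breaks the stated rate — this is handled by noting that centering by a sample mean versus the true mean differs by a quantity already controlled by the same bias/variance bounds applied to $f\circ T$ alone.
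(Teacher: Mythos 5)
Your bias argument is essentially the paper's: the paper bounds $\abs{\Ebb_{\nu_j}[h]-\Ebb_{p_\theta}[h]}\le M\chi(\nu_j,p_\theta)$ by Cauchy--Schwarz, uses the spectral-gap contraction $\chi(\nu_j,p_\theta)\le(1-\gamma)^{j-1}\chi$, and sums the geometric series to obtain $M\chi/(m\gamma)$, which is exactly the mechanism you describe; the vector statement then follows by taking $h=v^{\top}\bigl(A_\lambda\nabla_\theta\log p_\theta\bigr)$ and maximizing over unit vectors $v$. For the second moment, however, the paper takes a different and more direct route than your squared-bias-plus-variance split with a Bernstein/Paulin-type bound: it applies a single Hoeffding-type concentration inequality for general non-stationary Markov chains (Fan et al.), namely $\Pbb_{\nu}\bigl(e_{m,k}(h)-\Ebb_{p_{\theta}}[h]>\epsilon\bigr)\leq (1+\chi)^{k}\exp\bigl(-mk\gamma\epsilon^2/(4M^2)\bigr)$, and gets the variance bound by tail integration $\Ebb\abs{X}^2=\int_0^\infty 2s\,\Pbb(\abs{X}>s)\,ds$; the truncation point of that integral is precisely what produces $C(k)=1+k\log(1+\chi)$ and the constant $8$, and the dimension factor $n$ enters through $\norm{w}^2\le n\max_{\norm{v}=1}\abs{v^{\top}w}^2$, not through a coordinatewise variance calculation. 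So your reverse-engineering of $C(k)$ points at the right phenomenon (the $(1+\chi)^{k}$ prefactor coming from the $k$ restarts), but as written this part of your plan is a sketch: you would need to commit to a concrete inequality with explicit constants, and no bias/variance decomposition or independence-across-groups argument is actually needed once the tail bound for the full $km$-sample average is in hand. Finally, your remark about the empirical-mean centering in $A_\lambda$ is a genuine point that the paper glosses over — its proof silently identifies $\bar g_\lambda(\theta,S)$ with the average of the true-mean-centered integrand — and your proposed fix (the sample mean of $f\circ T$ deviates from $\Ebb_{p_\theta}[f(T(x))]$ by a quantity controlled by the same bias and variance estimates, and $\norm{\nabla_\theta\phi}\le M_2$ converts this into an error of the same order) is the right way to close that gap; it alters constants but not the stated rates.
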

\begin{proof}
    We first prove that the gradient $\nabla_\theta L_\lambda(\theta)$ defined by \eqref{eq:gradient} is bounded. As Assumption \ref{asm:SGD} holds, $f(T(x)), \log p_{\theta}(x), \norm{\nabla_{\theta}\log p_{\theta}(x)}$ are bounded by $B(f\circ T), M_1, M_2$ respectively. Hence, it is easily derived that $\norm{A_{\lambda}(x;\theta)\nabla_{\theta}\log p_{\theta}(x)}$ is bounded by $(B(f\circ T)+\lambda M_1)M_2$.

    We now  estimate  the error of  MCMC for an arbitrary function $h$ bounded by $M$. The empirical distribution of $s_{i}^{j}$ can be denoted by $\nu_j=\nu P_{\theta}^{j-1}$ for any batch $i$. Then, it follows from the Cauchy-Schwarz inequality that
    \begin{equation}
        \label{eq:change-distribution}
        \begin{aligned}
            \abs{\Ebb_{\nu_{j}}[h]-\Ebb_{p_{\theta}}[h]}\leq \Ebb_{p_\theta}\left[\left(\frac{d\nu_i}{dp_{\theta}}-1\right)^2\right]^{1/2}\Ebb_{p_\theta}\left[h(x)^2\right]^{1/2}\leq M\chi(\nu_j,p_{\theta}).
        \end{aligned}
    \end{equation}
    Since $\nu_j=\nu_{j-1}P_{\theta}$, it also holds that
    \begin{equation}
        \label{eq:j-distribution}
        \begin{aligned}
            \chi(\nu_j,p_{\theta}) & =\Ebb_{p_\theta}\left[\left(\frac{d\nu_j}{dp_{\theta}}-1\right)^2\right]^{1/2}=\Ebb_{p_\theta}\left[\left(\frac{d(\nu_{j-1}P_{\theta})}{dp_{\theta}}-1\right)^2\right]^{k/2} \\
                                   & \leq (1-\gamma)\chi(\nu_{j-1},p_{\theta})\leq (1-\gamma)^{j-1}\chi.
        \end{aligned}
    \end{equation}
    Hence, we denote $e_{k,m}(h)=\frac{1}{mk}\sum_{1\leq i\leq k,1\leq j \leq m} h(s^{j}_i)$ and obtain
    \begin{equation*}
        \begin{aligned}
            \abs{\Ebb_{\nu}\left[e_{k,m}(h)\right]-\Ebb_{p_{\theta}}[h]} & \leq \frac{1}{k}\sum_{i=1}^{k}\abs{\Ebb_{\nu}\left[\frac{1}{mk}\sum_{1\leq i\leq k,1\leq j \leq m} h(s^{j}_i)\right]-\Ebb_{p_{\theta}}[h]} \\
                                                                         & \leq\frac{1}{mk}\sum_{1\leq i\leq k,1\leq j \leq m}M\chi(\nu_j,p_{\theta})                                                                 \\
                                                                         & \leq \frac{1}{mk}\sum_{1\leq i\leq k,1\leq j \leq m}(1-\gamma)^{j-1}M\chi\leq \frac{M\chi}{m\gamma},
        \end{aligned}
    \end{equation*}
    where the first inequality holds from \eqref{eq:change-distribution} and the second is due to \eqref{eq:j-distribution}.

    Using  the following concentration inequality from Fan et al. \cite{fan2021hoeffding}  for the general MCMC gives
    \begin{equation*}
        \Pbb_{\nu}\left(e_{m,k}(h)-\Ebb_{p_{\theta}}[h]>\epsilon\right)\leq (1+\chi)^{k}\exp\left( -\frac{mk\gamma\epsilon^2}{4M^2}\right).
    \end{equation*}
    Then, we can obtain the variance bound
    \begin{equation}
        \label{eq:variance}
        \begin{aligned}
            \ex{\nu}{\abs{e_{k,m}(h)-\Ebb_{p_{\theta}}[h]}^2}
            =    & \int_{0}^{+\infty}2s \Pbb_{\nu}\left(\abs{e_{m,k}(h)-\Ebb_{p_{\theta}}[h]}>s\right)ds \\
            \leq & \frac{8\left(1+k\log(1+\chi)\right)M^2}{mk\gamma}.
        \end{aligned}
    \end{equation}
    For any vector $v\in \R^n$, taking $h=v^{T}(A_{\lambda}\nabla_{\theta}\log p_{\theta})$ yields
    \begin{equation}
        \begin{aligned}
            \norm{\ex{\nu}{\bar g_\lambda(\theta)}-\nabla_\theta L_\lambda(\theta)} & =\max_{\norm{v}=1} \abs{v^{T}\left(\ex{\nu}{\bar g_\lambda(\theta)}-\nabla_\theta L_\lambda(\theta)\right)} \\
                                                                                    & \leq \frac{\chi M_2(B(f\circ T)+\lambda M_1)}{m\gamma},
        \end{aligned}
    \end{equation}
    where we use the fact that $\abs{v^{T}\nabla_\theta L_\lambda(\theta)}\leq\norm{v}\norm{\nabla_\theta L_\lambda(\theta)} \leq (B(f\circ T)+\lambda M_1)M_2$ discussed in the first paragraph of this proof. For the variance, it follows from \eqref{eq:variance} that
    \begin{equation}
        \begin{aligned}
            \ex{\nu}{\norm{\bar g_\lambda(\theta,S)-\nabla_\theta L_\lambda(\theta)}^2}
             & \leq n \max_{\norm{v}=1}\ex{\nu}{\abs{v^{T}\bar g_\lambda(\theta,S)-v^{T}\nabla_\theta L_\lambda(\theta)}^2} \\
             & \leq \frac{8n\left(1+k\log(1+\chi)\right)M_2^2(B(f\circ T)+\lambda M_1)^2}{mk\gamma}.
        \end{aligned}
    \end{equation}
    This completes the proof.
\end{proof}

The bias $B(m)$ has an order of $O\left(\frac{1}{m}\right)$ and the variance $V(m,k)$ is of $O\left(\frac{1}{mk}\right)$. The MH algorithms ensures enough locality and randomness in exploring $\Bcal_n$. These error bounds show how hyper-parameters influence the stochastic gradient.

Under Assumption \ref{asm:SGD}, we can prove the $l$-smoothness of the cost function $L_{\lambda}(\theta)$ and perform a standardized analysis in stochastic optimization. The $l$-smoothness of the loss function is shown in the following lemma.
\begin{lemma}
    Let Assumption \ref{asm:SGD} hold.
    There exists a constant $l>0$ such that $ L_\lambda(\theta)$ is $l$-smooth, that is
    \begin{equation}
        \label{eq:Lips}
        \norm{\nabla_\theta L_\lambda(\theta^1)-\nabla_\theta L_\lambda(\theta^2)}\leq l\norm{\theta^1-\theta^2},~\forall \theta^1,\theta^2 \in \mathbb{R}^{d}.
    \end{equation}
\end{lemma}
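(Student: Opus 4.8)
The plan is to differentiate the explicit policy-gradient formula from Lemma~\ref{lem:loss_grad} with $c = \ex{p_\theta}{f(T(x))}$, namely
\[
\nabla_\theta L_\lambda(\theta)=\ex{p_\theta}{\prn{f(T(x))+\lambda\phi(x;\theta)-\ex{p_\theta}{f(T(x))}}\nabla_\theta\phi(x;\theta)},
\]
expand the expectation as a finite sum over $\Bcal_n$ weighted by $p_\theta(x)=e^{\phi(x;\theta)}/Z(\theta)$, and bound the $\theta$-gradient of each factor. First I would record the elementary consequences of Assumption~\ref{asm:SGD}: $|\phi|\le M_1$, $\norm{\nabla_\theta\phi}\le M_2$, $\nabla_\theta\phi$ is $M_3$-Lipschitz, and $|f(T(x))|\le B(f\circ T)$; from these the three factors appearing in the summand — the ``advantage'' scalar $A_\lambda(x;\theta)$, the weight $p_\theta(x)$, and the vector $\nabla_\theta\phi(x;\theta)$ — are each bounded and each Lipschitz in $\theta$.

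The key steps, in order: (1) Show $x\mapsto p_\theta(x)$ has a $\theta$-gradient bounded uniformly (indeed $\nabla_\theta p_\theta(x)=p_\theta(x)(\nabla_\theta\phi(x;\theta)-\ex{p_\theta}{\nabla_\theta\phi})$, whose norm is $\le 2M_2 p_\theta(x)$) and is Lipschitz in $\theta$ with a constant depending only on $M_2,M_3$; this uses that $Z(\theta)=\sum_x e^{\phi(x;\theta)}$ satisfies $\nabla_\theta\log Z = \ex{p_\theta}{\nabla_\theta\phi}$ and the already-established bounds. (2) Show the scalar advantage $A_\lambda(x;\theta)=f(T(x))+\lambda\phi(x;\theta)-\ex{p_\theta}{f(T(x))}$ is bounded by $B(f\circ T)+\lambda M_1$ (as in Lemma~\ref{lem:error}) and is Lipschitz in $\theta$: its $\theta$-gradient is $\lambda\nabla_\theta\phi(x;\theta)-\nabla_\theta\ex{p_\theta}{f(T(x))}$, and the second term's norm is controlled by $2M_2 B(f\circ T)$ via step~(1). (3) Combine: writing $\nabla_\theta L_\lambda(\theta)=\sum_{x\in\Bcal_n}p_\theta(x)A_\lambda(x;\theta)\nabla_\theta\phi(x;\theta)$, the difference $\nabla_\theta L_\lambda(\theta^1)-\nabla_\theta L_\lambda(\theta^2)$ is a finite sum of products of three $\theta$-dependent factors, each bounded and Lipschitz; a routine telescoping of the triple product (``bound two, vary one'') gives $\norm{\nabla_\theta L_\lambda(\theta^1)-\nabla_\theta L_\lambda(\theta^2)}\le l\norm{\theta^1-\theta^2}$ with an explicit $l=l(M_1,M_2,M_3,\lambda,B(f\circ T))$, since $\sum_x$ of the Lipschitz constants of $p_\theta(x)$ stays finite (it is $\le$ const because $\sum_x p_\theta(x)=1$ and the extra factors are the bounded quantities).

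The main obstacle — really the only non-cosmetic point — is step~(1): controlling the $\theta$-dependence of the \emph{normalized} weights $p_\theta(x)$, because the normalizer $Z(\theta)$ couples all $2^n$ coordinates. The clean way around it is never to bound $\nabla_\theta Z$ or $\nabla^2_\theta Z$ directly but to work with $\nabla_\theta\log p_\theta(x)=\nabla_\theta\phi(x;\theta)-\ex{p_\theta}{\nabla_\theta\phi}$, whose Lipschitzness in $\theta$ follows from Assumption~\ref{asm:SGD}(2)--(3) together with the fact that $\theta\mapsto\ex{p_\theta}{\cdot}$ is itself Lipschitz (its derivative is a covariance against $\nabla_\theta\phi$, hence bounded by $2M_2$ times the sup-norm of the integrand). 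Everything else is bookkeeping with the product rule over a finite sum, and the constant $l$ can be left implicit. I would state $l$ symbolically (e.g.\ $l=C\,M_2^2(M_1\lambda+B(f\circ T))+M_3(\lambda+\ldots)$ for an absolute constant $C$) rather than optimize it, since only its existence is needed for the subsequent SGD convergence argument.
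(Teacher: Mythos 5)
Your proposal is correct and follows essentially the same route as the paper: write $\nabla_\theta L_\lambda(\theta)$ as a finite sum over $\Bcal_n$ of products of bounded, $\theta$-Lipschitz factors, telescope the product (``bound two, vary one''), and use $\sum_{x}p_\theta(x)=1$ so that the per-point Lipschitz moduli, which are proportional to $p_\theta(x)$ via the mean-value bound on $e^{\phi}$, remain summable over the $2^n$ points. The only differences are cosmetic: the paper sets $\phi=\log p_\theta$, so $p_\theta=e^{\phi}$ is already normalized (no $Z(\theta)$ and no covariance correction), and it works with the gradient formula without the baseline $\ex{p_\theta}{f(T(x))}$, which spares the extra step where you bound $\nabla_\theta\,\ex{p_\theta}{f(T(x))}$.
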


\begin{proof}
    For convenience, we denote that $\phi_i(x)=\phi(x;\theta^i)$ and $\nabla\phi_i(x)=\nabla_{\theta}\phi(x;\theta^i)$ for $i=1,2$. Under Assumption \ref{asm:SGD}, it holds that for any $x\in \Bcal_n$,
    \begin{equation*}
        \begin{aligned}
            \abs{\phi_i(x)}\leq M_1,                                   & \quad \norm{\nabla\phi_i(x)}\leq M_2 ,\quad \mathrm{for}~ i=1,2,              \\
            \abs{\phi_1(x)-\phi_2(x)}\leq M_2\norm{\theta^1-\theta^2}, & \quad \norm{\nabla\phi_1(x)-\nabla\phi_2(x)}\leq M_3\norm{\theta^1-\theta^2}.
        \end{aligned}
    \end{equation*}
    Furthermore, using the Cauchy mean value theorem, we have that there exists $\lambda \in (0,1)$ such that $e^{x_1}-e^{x_2}=e^{\lambda x_1+(1-\lambda)x_2}(x_1-x_2)$ for arbitrary $x_1,x_2$. This fact leads to
    \begin{equation*}
        \begin{aligned}
            \abs{e^{\phi_1(x)}-e^{\phi_2(x)}} & =\abs{e^{\lambda\phi_1(x)+(1-\lambda)\phi_2(x)}}\abs{\phi_1(x)-\phi_2(x)} \\
                                              & \leq \left(e^{\phi_1(x)}+e^{\phi_2(x)}\right)M_2\norm{\theta^1-\theta^2}.
        \end{aligned}
    \end{equation*}

    Because $p_{\theta}(x)=e^{\phi(x;\theta)}$, we can derive that
    \begin{equation*}
        \begin{aligned}
              & \nabla_\theta L_\lambda(\theta^1)-\nabla_\theta L_\lambda(\theta^2)                                     \\=&\sum_{x\in \Bcal_n}\left(e^{\phi_1(x)}(f(T(x))+\lambda\phi_1(x))\nabla\phi_1(x)-e^{\phi_2(x)}(f(T(x))+\lambda\phi_2(x))\nabla\phi_2(x)\right)\\
            = & \sum_{x\in \Bcal_n}\left(e^{\phi_1(x)}-e^{\phi_2(x)}\right)(f(T(x))+\lambda\phi_1(x))\nabla\phi_1(x)    \\
              & +\sum_{x\in \Bcal_n}e^{\phi_2(x)}(f(T(x))+\lambda\phi_1(x))\left(\nabla\phi_1(x)-\nabla\phi_2(x)\right) \\
              & +\lambda\sum_{x\in \Bcal_n}e^{\phi_2(x)}(\phi_1(x)-\phi_2(x))\nabla\phi_2(x).
        \end{aligned}
    \end{equation*}
    Hence, it follows that
    \begin{equation*}
        \begin{aligned}
                 & \norm{\nabla_\theta L_\lambda(\theta^1)-\nabla_\theta L_\lambda(\theta^2)}                                                                                   \\
            \leq & \sum_{x\in \Bcal_n}\left(e^{\phi_1(x)}+e^{\phi_2(x)}\right)M_2\norm{\theta^1-\theta^2}(B(f\circ T)+\lambda M_1)M_2                                           \\
                 & + \sum_{x\in \Bcal_n}e^{\phi_2(x)}(B(f\circ T)+\lambda M_1)M_3\norm{\theta^1-\theta^2}+\lambda\sum_{x\in \Bcal_n}e^{\phi_2(x)} M_2^2\norm{\theta^1-\theta^2} \\
            =    & \left(2M_2^2(B(f\circ T)+\lambda M_1)+\lambda M_2^2+M_3(B(f\circ T)+\lambda M_1)\right)\norm{\theta^1-\theta^2}.
        \end{aligned}
    \end{equation*}
    Let $l=2M_2^2(B(f\circ T)+\lambda M_1)+\lambda M_2^2+M_3(B(f\circ T)+\lambda M_1)$ and then \eqref{eq:Lips} holds.
\end{proof}

It follows from \eqref{eq:Lips} that
\begin{equation}
    \label{eq:Lips2}
    L_{\lambda}(\theta^2)\leq L_{\lambda}(\theta^1)+\left\langle\nabla_{\theta}L_{\lambda}(\theta^1),\theta^2-
    \theta^1\right\rangle+\frac{l}{2}\norm{\theta^2-
        \theta^1}^2.
\end{equation}
Under our discussion, Algorithm \ref{alg:MCPG} updates the policy with the stochastic gradient $\bar{g}(\theta)$. We have known the error bound of $\bar{g}(\theta)$ and $l$-smoothness of the cost function. By discussing the expected descent within each iteration, we can establish the  first-order convergence  and give the convergence rate as follows.
\begin{theorem}
    Let Assumption \ref{asm:SGD} hold and $\{\theta^t\}$ be generated by MCPG. If the stepsize satisfies $\eta^t\leq \frac{1}{2L}$, then for any $\tau$, we have

    \begin{equation}
        \label{eq:SGDconv}
        \min_{1\leq t\leq \tau}\ex{}{\norm{\nabla_\theta L_\lambda(\theta^t)}^2}\leq O\left(\frac{1}{\sum_{t=1}^{\tau}\eta^t}+\frac{\sum_{t=1}^{\tau}(\eta^{t})^2}{mk\sum_{t=1}^{\tau}\eta^t}+\frac{1}{m^2}\right).
    \end{equation}
    In particular, if the stepsize is chosen as $\eta^t=\frac{c\sqrt{mk}}{\sqrt{t}}$ with $c\leq \frac{1}{2l}$, then we have
    \begin{equation}
        \min_{1\leq t\leq \tau}\ex{}{\norm{\nabla_\theta L_\lambda(\theta^t)}^2}\leq O\left(\frac{\log \tau}{\sqrt{mk\tau}}+\frac{1}{m^2}\right).
    \end{equation}
\end{theorem}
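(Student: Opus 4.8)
The plan is to carry out the textbook descent-lemma analysis of stochastic gradient descent, but with a \emph{biased} gradient oracle, feeding in the estimates of Lemma~\ref{lem:error} together with the $l$-smoothness inequality \eqref{eq:Lips2}. Write $g^t:=\bar g_\lambda(\theta^t,S^t)$ for the stochastic gradient used at step $t$ and let $\mathcal F_t$ be the information available up to and including $\theta^t$, so that, conditionally on $\mathcal F_t$, the only randomness remaining in $g^t$ is that of the $km$ Markov chains run in Algorithm~\ref{alg:MCPG}. Two preliminary observations are needed. First, $L_\lambda$ is globally bounded: since $|f(T(x))|\le B(f\circ T)$ and $|\log p_\theta(x)|=|\phi(x;\theta)|\le M_1$ by Assumption~\ref{asm:SGD}, we get $|L_\lambda(\theta)|\le B(f\circ T)+\lambda M_1$, hence $\Delta:=L_\lambda(\theta^1)-\inf_\theta L_\lambda(\theta)$ is a finite constant. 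Second, the bounds of Lemma~\ref{lem:error} apply conditionally at every step with a \emph{uniform} chi-divergence: because $|\phi(x;\theta)|\le M_1$ forces $p_\theta(x)/p_\theta(y)\in[e^{-2M_1},e^{2M_1}]$ for all $x,y,\theta$, the quantity $\chi(\nu,p_{\theta^t})$ is bounded by a constant depending only on $M_1$ and $n$, regardless of which (random) initial distribution $\nu$ the chains are warm-started from. Consequently, conditionally on $\mathcal F_t$ the bias $b^t:=\ex{}{g^t\mid\mathcal F_t}-\nabla_\theta L_\lambda(\theta^t)$ obeys $\norm{b^t}\le B(m)=O(1/m)$ and $\ex{}{\norm{g^t-\nabla_\theta L_\lambda(\theta^t)}^2\mid\mathcal F_t}\le V(m,k)=O(1/(mk))$ with $t$-independent constants.

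Next I would plug $\theta^{t+1}=\theta^t-\eta^t g^t$ into \eqref{eq:Lips2} and take $\ex{}{\cdot\mid\mathcal F_t}$. The cross term becomes $\langle\nabla_\theta L_\lambda(\theta^t),\nabla_\theta L_\lambda(\theta^t)+b^t\rangle=\norm{\nabla_\theta L_\lambda(\theta^t)}^2+\langle\nabla_\theta L_\lambda(\theta^t),b^t\rangle$, and the crucial point is to spend the last inner product via Young's inequality, $|\langle\nabla_\theta L_\lambda(\theta^t),b^t\rangle|\le\frac14\norm{\nabla_\theta L_\lambda(\theta^t)}^2+\norm{b^t}^2$, so that the bias is paid as $\norm{b^t}^2=O(1/m^2)$ rather than as $\norm{\nabla_\theta L_\lambda}\cdot B(m)=O(1/m)$; this is precisely what yields the $1/m^2$ term in the statement. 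For the quadratic term use $\ex{}{\norm{g^t}^2\mid\mathcal F_t}\le 2\norm{\nabla_\theta L_\lambda(\theta^t)}^2+2V(m,k)$. With the stepsize restriction $\eta^t\le\frac1{2l}$ one has $l(\eta^t)^2\le\frac12\eta^t$, which keeps the net coefficient of $\norm{\nabla_\theta L_\lambda(\theta^t)}^2$ at most $-\frac{\eta^t}{4}$, and taking total expectations gives
\begin{equation*}
\ex{}{L_\lambda(\theta^{t+1})}\le\ex{}{L_\lambda(\theta^t)}-\frac{\eta^t}{4}\ex{}{\norm{\nabla_\theta L_\lambda(\theta^t)}^2}+\eta^t B(m)^2+l(\eta^t)^2 V(m,k).
\end{equation*}

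Summing over $t=1,\dots,\tau$, using $\ex{}{L_\lambda(\theta^{\tau+1})}\ge\inf_\theta L_\lambda(\theta)$ to telescope, and dividing by $\frac14\sum_{t=1}^\tau\eta^t$, one bounds $\min_{1\le t\le\tau}\ex{}{\norm{\nabla_\theta L_\lambda(\theta^t)}^2}$ by the $\eta^t$-weighted average on the left and obtains
\begin{equation*}
\min_{1\le t\le\tau}\ex{}{\norm{\nabla_\theta L_\lambda(\theta^t)}^2}\le\frac{4\Delta}{\sum_{t=1}^\tau\eta^t}+4B(m)^2+\frac{4l V(m,k)\sum_{t=1}^\tau(\eta^t)^2}{\sum_{t=1}^\tau\eta^t},
\end{equation*}
which is \eqref{eq:SGDconv} once $B(m)^2=O(1/m^2)$ and $V(m,k)=O(1/(mk))$ are substituted. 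The second assertion is then a routine computation: for $\eta^t=c\sqrt{mk}/\sqrt t$ with $c\le\frac1{2l}$ (so $\eta^t\le\frac1{2l}$ holds for all but $O(mk)$ initial indices, which only affects constants), $\sum_{t=1}^\tau\eta^t=\Theta(c\sqrt{mk\tau})$ and $\sum_{t=1}^\tau(\eta^t)^2=\Theta(c^2mk\log\tau)$, so the first term is $O(1/\sqrt{mk\tau})$, the third is $O(\log\tau/\sqrt{mk\tau})$, and the bias term is $O(1/m^2)$.

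The step I expect to be the real obstacle is not the optimization calculus — which is entirely standard once the descent inequality is in hand — but the probabilistic bookkeeping around Lemma~\ref{lem:error}: in Algorithm~\ref{alg:MCPG} the initial distribution of the Markov chains at iteration $t$ is itself random (it is supported on the best filtered samples of iteration $t-1$), so the lemma can only be invoked conditionally, and one must argue that the chi-divergence it involves — and hence the constants hidden in $B(m)$ and $V(m,k)$ — can be replaced by deterministic quantities uniform in $t$. This is exactly where the uniform density-ratio bound $p_\theta(x)/p_\theta(y)\in[e^{-2M_1},e^{2M_1}]$ coming from part~1 of Assumption~\ref{asm:SGD} does the work. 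A minor secondary point is verifying that $\eta^t\le\frac1{2l}$ is precisely the condition allowing $l(\eta^t)^2\norm{\nabla_\theta L_\lambda}^2$ to be absorbed into $-\frac{\eta^t}{4}\norm{\nabla_\theta L_\lambda}^2$ after the Young split; everything else is routine.
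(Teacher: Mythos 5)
Your proposal is correct and follows essentially the same route as the paper's proof: the descent lemma for the $l$-smooth loss, a Cauchy--Schwarz/Young split so that the bias $B(m)$ from Lemma~\ref{lem:error} enters squared (producing the $1/m^2$ term) while the variance $V(m,k)$ enters with $(\eta^t)^2$, followed by telescoping, dividing by $\sum_t\eta^t$, and the standard stepsize computation. Your additional bookkeeping --- the uniform-in-$t$ control of the chi-divergence via $|\phi(x;\theta)|\le M_1$ for the randomly warm-started chains, and the remark that $\eta^t\le\frac{1}{2l}$ can fail only for $O(mk)$ initial indices under the second stepsize choice --- covers points the paper's proof passes over silently, but does not change the argument.
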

\begin{proof}
    The MCPG updates with the stochastic gradient as $\theta^{t+1}=\theta^{t}-\eta^{t}\bar{g}_{\lambda}(\theta^t)$. For convenience, we denote that $L^t=L_{\lambda}(\theta^t)$, $g^t=\nabla_{\theta}L_{\lambda}(\theta^t)$ and $\hat{g}^t=\bar{g}_{\lambda}(\theta^t)$ for any $t\geq 1$.Using the Lipschitz condition of $L_{\lambda}$, it follows from \eqref{eq:Lips2} that
    \begin{equation}
        \label{eq:descent1}
        \begin{aligned}
            L^{t+1} & \leq L^{t}+\left\langle g^{t},\theta^{t+1}-
            \theta^{t}\right\rangle+\frac{l}{2}\norm{\theta^{t+1}-
                \theta^{t}}^2
            \\
                    & =L^{t}-\eta^{t}\left\langle g^{t},\hat{g}^{t}\right\rangle+\frac{\eta^{t2} l}{2}\norm{\hat{g}^{t}}^2                                                                                      \\
                    & =L^{t}-(\eta^{t}-\eta^{t2} l)\left\langle g^{t},\hat{g}^{t}-g^{t}\right\rangle-\left(\eta^{t}-\frac{\eta^{t2} l}{2}\right)\norm{g^{t}}^2+\frac{\eta^{t2} l}{2}\norm{\hat{g}^{t}-g^{t}}^2.
        \end{aligned}
    \end{equation}
    Taking the condition expectation of \eqref{eq:descent1} over $\theta^{t}$, we obtain that
    \begin{equation}
        \label{eq:descent2}
        \begin{aligned}
            \ex{}{L^{t+1}|\theta^{t}}\leq & \ex{}{L^{t}|\theta^{t}}-(\eta^{t}-\eta^{t2} l)\left\langle g^{t},\ex{}{\hat{g}^{t}|\theta^{t}}-g^{t}\right\rangle            \\
                                          & -\left(\eta^{t}-\frac{\eta^{t3}l}{2}\right)\norm{g^{t}}^2+\frac{\eta^{t2} l}{2}\ex{}{\norm{\hat{g}^{t}-g^{t}}^2|\theta^{t}}.
        \end{aligned}
    \end{equation}
    Using the fact $\abs{\left\langle x,y\right\rangle}\leq (\norm{x}^2+\norm{y}^2)/2$, we have
    \begin{equation}
        \label{eq:descent3}
        -(\eta^{t}-\eta^{t2}l)\left\langle g^{t},\ex{}{\hat{g}^{t}|\theta^{t}}-g^{t}\right\rangle\leq \frac{\eta^{t}-\eta^{t2}l}{2}\norm{g^{t}}^2+\frac{\eta^{t}}{2}\norm{\ex{}{\hat{g}^{t}|\theta^{t}}-g^{t}}^2.
    \end{equation}
    Therefore, we plug in \eqref{eq:descent3} to \eqref{eq:descent2} and then get
    \begin{equation}
        \label{eq:descent4}
        \begin{aligned}
                 & 2(\ex{}{L^{t}|\theta^{t}}-\ex{}{L^{t+1}|\theta^{t}})                                                                                           \\
            \geq & \eta^{t}\norm{g^{t}}^2-\eta^{t}\cdot\norm{\ex{}{\hat{g}^{t}|\theta^{t}}-g^{t}}^2-\eta^{t2} l\cdot\ex{}{\norm{ \hat{g}^{t}-g^{t}}^2|\theta^{t}} \\
            \geq & \eta^{t}\norm{g^{t}}^2-\eta^{t} \cdot B^2(m)- \eta^{t2} l\cdot V(m,k),
        \end{aligned}
    \end{equation}
    where the second inequality is due to Lemma \ref{lem:error}.
    Since $ \ex{}{L^{t}|\theta^{t}}=\ex{}{L^{t}|\theta^{t-1}}$,  summing \eqref{eq:descent4} over $t=1,\dots,\tau$ and taking the total expectation  yields
    \begin{equation}
        \label{eq:descent5}
        \begin{aligned}
                 & \left(\sum_{t=1}^{\tau}\eta^{t}\right)\min_{1\leq t\leq \tau}\ex{}{\norm{g^{t}}^2}\leq\sum_{t=1}^{\tau}\eta^{t} \cdot \ex{}{\norm{g^{t}}^2} \\ \leq& 2(L^1-\ex{}{L^{\tau+1}|\theta^{\tau}})+\sum_{t=1}^{\tau}\eta^{t2}\cdot lV(m,k)+\sum_{t=1}^{\tau}\eta^{t} \cdot B^2(m)\\
            \leq & O\left(1\right)+\sum_{t=1}^{\tau}\eta^{t2} \cdot O\left(\frac{1}{mk}\right)+\sum_{t=1}^{\tau}\eta^{t} \cdot O\left(\frac{1}{m^2}\right).
        \end{aligned}
    \end{equation}
    Dividing both sides by $\sum_{t=1}^{\tau}\eta^{t}$ gives the inequality \eqref{eq:SGDconv}.
\end{proof}

\section{Numerical results}
\label{sec:numerical}
In this section, we demonstrate the effectiveness of our algorithms on a wide variety of test problems, including MaxCut, cheeger cut, binary quadratic programming, MIMO detection problem and MaxSAT.

\subsection{Experimental settings}

We apply mean field approximation  and express the distribution $p_\theta(x)$ by  \eqref{eq:mf}. For each variable $x_i$, there is a parameter $\theta_i$ in the probabilistic model. A sigmoid function is then applied to convert the output to the probability within $(0,1)$. In order to promote the sampling diversity, the probability is scaled to the range $(\alpha, 1-\alpha)$ for a given constant $0<\alpha<0.5$. Altogether, the parameter $\mu_i$ in \eqref{eq:mf}  is given as
\begin{equation} \label{mu-sigmoid}
    \mu_i = \phi_i(\theta_i) =\frac{1-2\alpha}{1+\exp(-\theta_i)}+\alpha,\quad 1\leq i\leq n.
\end{equation}
In practice, we take $\alpha=0.2$.  Since a large number of samples are required in Algorithm \ref{alg:MCPG}, the simple scheme \eqref{eq:mf} with \eqref{mu-sigmoid} is very fast on GPU and its performance is quite competitive to parameterization based on a few typical graph neural network in our experiments.

We compare MCPG against various methods, including state-of-the-art solvers, across all test problems. Note that MCPG utilizes the same computational device as the comparative algorithms, but additionally takes advantage of GPU acceleration.
Despite MCPG described in Algorithm \ref{alg:MCPG}, we consider the following two variants of our algorithm:
\begin{itemize}
    \item MCPG-U: In Algorithm \ref{alg:MCPG}, the sampling set $S$ is generated from the parameterized policy distribution $p_\theta(x)$. On the contrast, MCPG-U uses the fixed vertice-wise distribution $\mu_i = 0.5$. Therefore, the distribution of MCPG-U is equivalent to the case when the regular factor $\lambda$ tends to infinity. The comparison between MCPG-U and Algorithm \ref{alg:MCPG} illustrates the effectiveness of the parameterized policy distribution.
    \item MCPG-P:
          Existing unsupervised learning methods\cite{karalias2020erdos} use the expectation of the cost function $\mathbb{E}[f(x)]$ as the loss function to train a learning model. We notice that our sampling methods along with the filter scheme can be used as a powerful post-processing procedure. Therefore, in MCPG-P, we follow these existing learning methods to optimize the policy distribution at the beginning by minimizing $\mathbb{E}[f(x)]$, and then use this distribution instead of $p_\theta(x)$ to generate samples.
\end{itemize}
Our code
is available at \url{https://github.com/optsuite/MCPG}.

\subsection{MaxCut}
The MaxCut problem aims to divide a given weighted graph $G = (V,E)$ into two parts and maximize the total weight of the edges connecting two parts. This problem can be expressed as a binary programming problem:
\begin{equation}
    \label{prob:primary MaxCut_2}
    \max  \quad  \sum_{(i,j) \in E} w_{ij} (1-x_i x_j), \quad
    \st   \quad  x\in \{-1, 1\}^n.
\end{equation}
In this subsection, we show the effectiveness of our proposed algorithms on the MaxCut problems on both the Gset dataset and randomly generated graphs.

\subsubsection{Numerical comparison on Gset instances}
We performs experiments on standard MaxCut benchmark instances based on the publicly available Gset dataset commonly used for testing MaxCut algorithms. We select 7 graphs from Gset instances including two Erdos-Renyi graphs with uniform edge probability, two graphs where the connectivity gradually decays from node $1$ to $n$, two $4$-regular toroidal graphs, and one of the largest Gset instances.

The algorithms listed in the comparison include Breakout Local Search (BLS)\cite{benlic2013breakout}, Extremal Optimization (EO)\cite{boettcher2001extremal}, DSDP\cite{choi2000solving}, EMADM\cite{liu2022entropy}, RUN-CSP\cite{toenshoff2019run} and PI-GNN\cite{schuetz2022combinatorial}.
BLS is a heuristic algorithm combining local search and adaptive perturbation, which provides most of the best known solutions for the Gset dataset. EO is a heuristic algorithm inspired by equilibrium statistical physics and is widely applied on various graph bipartitioning problems. DSDP is an SDP solver based on dual-scaling algorithm. EMADM is the another SDP-based algorithm particularly designed for binary optimization, which preserves the rank-one constraint in order to find high quality binary solutions. RUN-CSP is a recurrent GNN architecture for maximum constraint satisfaction problems. PI-GNN is an unsupervised learning methods to solve graph partitioning problems based on graph neural networks. 

For BLS, DSDP, RUN-CSP and PI-GNN, we directly use the results presented in the referenced papers due to two reasons. First, the performance of all these algorithms is highly dependent on the implementation. Second, the stochastic nature of heuristic techniques inevitably leads to the deviation between the results  presented in the papers and the reproduction experiments. Thus we regard the numerical experiments reported in the papers as the most reliable results.

\begin{table}[htbp]
    \centering
    \caption{Computational results on selected graphs with results sourced from references.}
    \setlength{\tabcolsep}{3pt}
    \renewcommand\arraystretch{1.2}
    \begin{tabular}{|l|l|l|l|l|l|l|l|l|l|}
        \hline
        Graph & Nodes  & Edges  & MCPG            & BLS             & DSDP           & RUN-CSP        & PI-GNN & EO            & EMADM         \\ \hline
        G14   & 800    & 4,694  & \textbf{3,064}  & \textbf{3,064}  & 2,922          & 2,943          & 3,026  & 3047          & 3045          \\ \hline
        G15   & 800    & 4,661  & \textbf{3,050}  & \textbf{3,050}  & 2,938          & 2,928          & 2,990  & 3028          & 3034          \\ \hline
        G22   & 2,000  & 19,990 & \textbf{13,359} & \textbf{13,359} & 12,960         & 13,028         & 13,181 & 13215         & 13297         \\ \hline
        G49   & 3,000  & 6,000  & \textbf{6,000}  & \textbf{6,000}  & \textbf{6,000} & \textbf{6,000} & 5,918  & \textbf{6000} & \textbf{6000} \\ \hline
        G50   & 3,000  & 6,000  & \textbf{5,880}  & \textbf{5,880}  & \textbf{5,880} & \textbf{5,880} & 5,820  & 5878          & 5870          \\ \hline
        G55   & 5,000  & 12,468 & \textbf{10,296} & 10,294          & 9,960          & 10,116         & 10,138 & 10107         & 10208         \\ \hline
        G70   & 10,000 & 9,999  & \textbf{9595}   & 9,541           & 9,456          & -              & 9,421  & 8513          & 9557          \\ \hline
    \end{tabular}
    \label{Gset 1}
\end{table}

Table \ref{Gset 1} reports the best results achieved by MCPG and the comparison algorithms. It should be noticed that MCPG finds the best results for all the graphs. Compared to PI-GNN, an unsupervised learning framework, MCPG is more stable in finding the best solutions due to its efficient sampling methods and local search techniques. It is worth mentioning that the cut size $9595$ for G70 is the best-known MaxCut and is much better than previous reported results.

\subsubsection{Numerical results on Gset instances within limited time}
In this section, we control the running time for MCPG and report the numerical results on Gset instances. Note that some results are worse than that in Table \ref{Gset 1}, because results in Table \ref{Gset 1} are achieved by running the algorithm with about $3-5$ times of the controlled time, and by repeating many times. We choose a heuristic-based algorithm EO and an SDP-based algorithm EMADM as the comparison algorithms. As shown in Table \ref{Gset 1}, these two algorithms achieve fairly good performance compared to other algorithms, thus we do not  include other algorithms in the comparison. On the other hand, the performance of both of the two algorithms is influenced by only several hyper-parameters, and the results of the reproduction experiments are barely consistent with the results presented in the references. We describe the implementation details of the algorithms utilized in the comparison.
\begin{itemize}
    \item EO: The standard version of EO requires $n^2$ iterations for the search procedure, where $n$ is the number of nodes. With this setting, the time consumption is unaffordable on large graphs, such as G81. Therefore, we set the number of iterations to $\min\{n^2, 1000n\}$ to make the time consumption comparable to MCPG.
    \item EMADM: The penalized SDP relaxation is solved by a first-order type method. However, the quality of the solution given by EMADM is affected by the initial point. Therefore, we report the best result of three repeated experiments as the gap of EMADM, and the time reported is the summation of the experiments.
\end{itemize}

We use the best-known results as the benchmark. Most of the best-known results are reported by BLS, except for G55 and G70,  whose optimal result is found by MCPG with a long running time. Denoting $\mathrm{UB}$ as the best-known results and $\mathrm{obj}$ as the cut size, the reported gap is defined as follows:
$$
    \mathrm{gap} = \frac{\mathrm{UB} - \mathrm{obj}}{\mathrm{UB}} \times 100\%.
$$

\begin{table*}
    \centering

    \setlength{\tabcolsep}{1.5pt}
    \caption{Detailed result on selected Gset instances. UB is the result reported by BLS, except for G55 and G70, where we find better solutions after running MCPG for a long time.}
    \begin{tabular}{lllllllllllllll}
        \toprule
        \multicolumn{2}{c}{Problem} & \multicolumn{3}{c}{MCPG} & \multicolumn{3}{c}{MCPG-U} & \multicolumn{3}{c}{MCPG-P} & \multicolumn{2}{c}{EO}  & \multicolumn{2}{c}{EMADM}                                                                                               \\
        \cmidrule(r){1-2} \cmidrule(r){3-5} \cmidrule(r){6-8} \cmidrule(r){9-11} \cmidrule(r){12-13} \cmidrule(r){14-15}
        name                        & UB                       & \multicolumn{2}{l}{gap}    & time                       & \multicolumn{2}{l}{gap} & time                      & \multicolumn{2}{l}{gap} & time & gap    & time  & gap & time                                \\

                                    &                          & (best,                     & mean)                      &                         & (best,                    & mean)                   &      & (best, & mean) &     & mean  &      &               &      \\
        \cmidrule(r){1-2} \cmidrule(r){3-5} \cmidrule(r){6-8} \cmidrule(r){9-11} \cmidrule(r){12-13} \cmidrule(r){14-15}
        G22                         & 13359                    & \textbf{0.01}              & 0.10                       & 55                      & 0.01                      & 0.13                    & 51   & 0.64   & 0.76  & 84  & 1.21  & 116  & 0.42          & 66   \\ \hline
        G23                         & 13344                    & \textbf{0.02}              & 0.15                       & 56                      & 0.07                      & 0.18                    & 51   & 0.50   & 0.61  & 85  & 0.95  & 116  & 0.39          & 66   \\ \hline
        G24                         & 13337                    & \textbf{0.03}              & 0.18                       & 56                      & 0.04                      & 0.15                    & 51   & 0.31   & 0.48  & 85  & 0.97  & 116  & 0.32          & 60   \\ \hline
        G25                         & 13340                    & \textbf{0.07}              & 0.18                       & 55                      & 0.08                      & 0.18                    & 51   & 0.44   & 0.54  & 85  & 0.95  & 115  & 0.45          & 66   \\ \hline
        G26                         & 13328                    & \textbf{0.08}              & 0.17                       & 55                      & 0.10                      & 0.17                    & 50   & 0.41   & 0.53  & 84  & 0.92  & 115  & 0.36          & 66   \\ \hline
        G27                         & 3341                     & \textbf{0.15}              & 0.61                       & 55                      & 0.24                      & 0.78                    & 50   & 1.23   & 1.82  & 75  & 4.13  & 116  & 1.12          & 72   \\ \hline
        G28                         & 3298                     & 0.15                       & 0.55                       & 55                      & \textbf{0.12}             & 0.67                    & 50   & 0.61   & 1.29  & 76  & 4.13  & 116  & 1.66          & 72   \\ \hline
        G29                         & 3405                     & \textbf{0.00}              & 0.83                       & 55                      & 0.56                      & 0.92                    & 51   & 1.59   & 2.14  & 86  & 4.04  & 116  & 2.10          & 72   \\ \hline
        G30                         & 3412                     & \textbf{0.06}              & 0.44                       & 55                      & 0.09                      & 0.59                    & 50   & 1.55   & 1.98  & 86  & 3.94  & 115  & 1.29          & 66   \\ \hline
        G31                         & 3309                     & \textbf{0.12}              & 0.58                       & 55                      & 0.30                      & 0.67                    & 50   & 0.73   & 1.54  & 78  & 3.98  & 115  & 1.28          & 66   \\ \hline
        G32                         & 1410                     & \textbf{0.57}              & 1.01                       & 54                      & 1.56                      & 1.83                    & 49   & 2.41   & 2.85  & 73  & 2.67  & 99   & 2.01          & 66   \\ \hline
        G33                         & 1382                     & \textbf{0.72}              & 1.03                       & 53                      & 1.59                      & 2.11                    & 49   & 1.74   & 1.94  & 60  & 2.51  & 99   & 1.57          & 66   \\ \hline
        G34                         & 1384                     & \textbf{0.58}              & 0.92                       & 54                      & 1.16                      & 1.60                    & 49   & 1.16   & 1.59  & 57  & 2.26  & 99   & 1.83          & 66   \\ \hline
        G35                         & 7684                     & \textbf{0.26}              & 0.42                       & 55                      & 0.42                      & 0.51                    & 49   & 0.83   & 0.94  & 79  & 0.79  & 107  & 0.61          & 66   \\ \hline
        G36                         & 7678                     & \textbf{0.31}              & 0.46                       & 55                      & 0.40                      & 0.52                    & 50   & 0.86   & 0.91  & 79  & 0.79  & 107  & 0.69          & 66   \\ \hline
        G37                         & 7689                     & \textbf{0.23}              & 0.42                       & 54                      & 0.38                      & 0.50                    & 50   & 0.74   & 0.82  & 79  & 0.78  & 107  & 0.73          & 66   \\ \hline
        G38                         & 7687                     & \textbf{0.22}              & 0.41                       & 54                      & 0.36                      & 0.52                    & 50   & 0.94   & 1.04  & 78  & 0.83  & 107  & 0.67          & 72   \\ \hline
        G39                         & 2408                     & \textbf{0.08}              & 0.66                       & 54                      & 0.21                      & 0.76                    & 50   & 1.70   & 2.25  & 77  & 3.78  & 106  & 1.94          & 78   \\ \hline
        G40                         & 2400                     & \textbf{0.04}              & 0.44                       & 54                      & 0.25                      & 0.61                    & 50   & 0.50   & 1.04  & 65  & 4.11  & 106  & 1.88          & 84   \\ \hline
        G41                         & 2405                     & \textbf{0.00}              & 0.37                       & 54                      & 0.08                      & 0.58                    & 50   & 2.12   & 2.91  & 80  & 4.46  & 106  & 1.93          & 84   \\ \hline
        G42                         & 2481                     & \textbf{0.00}              & 0.54                       & 54                      & 0.28                      & 0.77                    & 50   & 2.10   & 2.81  & 80  & 4.21  & 106  & 2.70          & 84   \\ \hline
        G43                         & 6660                     & \textbf{0.00}              & 0.04                       & 28                      & 0.00                      & 0.05                    & 26   & 0.32   & 0.42  & 45  & 0.96  & 45   & 0.32          & 24   \\ \hline
        G44                         & 6650                     & \textbf{0.00}              & 0.01                       & 28                      & 0.00                      & 0.05                    & 25   & 0.35   & 0.47  & 46  & 0.97  & 45   & 0.41          & 30   \\ \hline
        G45                         & 6654                     & \textbf{0.00}              & 0.08                       & 28                      & 0.00                      & 0.09                    & 26   & 0.20   & 0.42  & 45  & 0.85  & 45   & 0.46          & 30   \\ \hline
        G46                         & 6649                     & \textbf{0.00}              & 0.07                       & 28                      & 0.00                      & 0.08                    & 25   & 0.30   & 0.45  & 45  & 0.80  & 45   & 0.50          & 30   \\ \hline
        G47                         & 6657                     & \textbf{0.00}              & 0.06                       & 28                      & 0.00                      & 0.08                    & 26   & 0.23   & 0.41  & 45  & 0.82  & 45   & 0.33          & 30   \\ \hline
        G55                         & 10296                    & 0.34                       & 0.47                       & 145                     & \textbf{0.23}             & 0.36                    & 138  & 1.08   & 1.45  & 151 & 2.75  & 218  & 0.84          & 288  \\ \hline
        G56                         & 4012                     & 0.77                       & 1.25                       & 146                     & \textbf{0.62}             & 0.88                    & 138  & 2.32   & 2.72  & 159 & 7.04  & 219  & 1.69          & 261  \\ \hline
        G57                         & 3492                     & \textbf{1.03}              & 1.31                       & 141                     & 1.26                      & 1.65                    & 134  & 3.58   & 4.01  & 156 & 2.90  & 220  & 2.18          & 258  \\ \hline
        G58                         & 19263                    & \textbf{0.20}              & 0.32                       & 161                     & 0.26                      & 0.33                    & 154  & 1.23   & 1.32  & 162 & 1.06  & 232  & 0.98          & 288  \\ \hline
        G59                         & 6078                     & 0.67                       & 1.07                       & 161                     & \textbf{0.64}             & 1.04                    & 154  & 1.94   & 2.33  & 177 & 5.32  & 232  & 2.62          & 303  \\ \hline
        G60                         & 14176                    & 0.35                       & 0.57                       & 202                     & \textbf{0.26}             & 0.38                    & 192  & 1.17   & 1.57  & 205 & 2.78  & 509  & 0.59          & 504  \\ \hline
        G61                         & 5789                     & 0.98                       & 1.47                       & 202                     & \textbf{0.92}             & 1.12                    & 191  & 2.68   & 3.20  & 214 & 6.64  & 509  & 2.05          & 537  \\ \hline
        G62                         & 4868                     & \textbf{1.36}              & 1.57                       & 197                     & 1.89                      & 2.11                    & 189  & 3.92   & 4.37  & 217 & 2.63  & 509  & 2.30          & 402  \\ \hline
        G63                         & 26997                    & \textbf{0.25}              & 0.33                       & 224                     & 0.30                      & 0.37                    & 214  & 0.93   & 1.51  & 242 & 0.92  & 542  & 0.89          & 576  \\ \hline
        G64                         & 8735                     & \textbf{0.45}              & 1.25                       & 225                     & 0.98                      & 1.25                    & 216  & 2.43   & 3.08  & 242 & 5.21  & 542  & 2.70          & 630  \\ \hline
        G65                         & 5558                     & \textbf{1.37}              & 1.74                       & 223                     & 1.94                      & 2.16                    & 215  & 3.82   & 4.40  & 240 & 3.04  & 465  & 2.55          & 690  \\ \hline
        G66                         & 6360                     & \textbf{1.54}              & 1.86                       & 254                     & 2.20                      & 2.43                    & 241  & 4.30   & 4.72  & 267 & 3.18  & 510  & 2.55          & 1032 \\ \hline
        G67                         & 6940                     & \textbf{1.38}              & 1.57                       & 286                     & 1.99                      & 2.19                    & 270  & 3.79   & 4.53  & 295 & 2.96  & 550  & 2.51          & 1110 \\ \hline
        G70                         & 9595                     & 0.59                       & 0.86                       & 274                     & 0.74                      & 0.87                    & 260  & 1.62   & 2.34  & 248 & 12.80 & 541  & \textbf{0.42} & 1194 \\ \hline
        G72                         & 6998                     & \textbf{1.51}              & 1.76                       & 284                     & 2.17                      & 2.40                    & 270  & 4.73   & 5.29  & 294 & 3.09  & 554  & 2.66          & 1110 \\ \hline
        G77                         & 9926                     & \textbf{1.49}              & 1.81                       & 400                     & 2.38                      & 2.58                    & 378  & 4.81   & 5.43  & 425 & 3.24  & 755  & 2.68          & 2646 \\ \hline
        G81                         & 14030                    & \textbf{1.74}              & 1.98                       & 571                     & 2.65                      & 2.80                    & 559  & 5.26   & 5.57  & 634 & 3.46  & 1038 & 2.78          & 6144 \\
        \bottomrule
    \end{tabular}
    \label{Gset std}
\end{table*}

Table \ref{Gset std} shows a detailed result on the medium-size graphs among Gset instances. We report the best gap (\texttt{best}), mean gap (\texttt{mean}) and mean time (\texttt{time}) over $20$ independent runs. Compared to the other listed algorithms, MCPG and MCPG-U achieve the maximum objective function value on all graphs except for G70. Comparing the performance of the three variants MCPG, MCPG-U and MCPG-P, we conclude that the optimized policy distribution apparently improves the performance, especially for large graphs such as G72, G77 and G81. In particular, MCPG gains a significant improvement over MCPG-U when the graph is 4-regular, such as on G32-G34.

\begin{table*}[htbp]
    \centering
    \setlength{\tabcolsep}{1.5pt}
    \caption{Detailed result on large sparse Gset instances. MCPG and its variants use edge local search as the filter function, and therefore they are suffixed by E or -E.}
    \begin{tabular}{lllllllllllllll}
        \toprule
        \multicolumn{2}{c}{Problem} & \multicolumn{3}{c}{MCPG-E} & \multicolumn{3}{c}{MCPG-UE} & \multicolumn{3}{c}{MCPG-PE} & \multicolumn{2}{c}{EO}  & \multicolumn{2}{c}{EMADM}                                                                                       \\
        \cmidrule(r){1-2} \cmidrule(r){3-5} \cmidrule(r){6-8} \cmidrule(r){9-11} \cmidrule(r){12-13} \cmidrule(r){14-15}
        name                        & UB                         & \multicolumn{2}{l}{gap}     & time                        & \multicolumn{2}{l}{gap} & time                      & \multicolumn{2}{l}{gap} & time & gap    & time  & gap  & time                       \\

                                    &                            & (best,                      & mean)                       &                         & (best,                    & mean)                   &      & (best, & mean) &      & mean  &      &      &      \\
        \cmidrule(r){1-2} \cmidrule(r){3-5} \cmidrule(r){6-8} \cmidrule(r){9-11} \cmidrule(r){12-13} \cmidrule(r){14-15}
        G55                         & 10296                      & \textbf{0.11}               & 0.34                        & 294                     & 0.22                      & 0.35                    & 288  & 1.08   & 1.45  & 490  & 2.75  & 218  & 0.84 & 288  \\ \hline
        G56                         & 4012                       & 0.67                        & 0.88                        & 297                     & \textbf{0.55}             & 0.85                    & 289  & 2.32   & 2.72  & 489  & 7.04  & 219  & 1.69 & 261  \\ \hline
        G57                         & 3492                       & \textbf{0.86}               & 1.09                        & 239                     & 0.97                      & 1.21                    & 229  & 3.58   & 4.01  & 392  & 2.90  & 220  & 2.18 & 258  \\ \hline
        G60                         & 14176                      & \textbf{0.24}               & 0.38                        & 418                     & 0.28                      & 0.36                    & 398  & 1.17   & 1.57  & 675  & 2.78  & 509  & 0.59 & 504  \\ \hline
        G61                         & 5789                       & \textbf{0.71}               & 1.00                        & 413                     & 0.83                      & 0.95                    & 396  & 2.68   & 3.20  & 675  & 6.64  & 509  & 2.05 & 537  \\ \hline
        G62                         & 4868                       & \textbf{0.99}               & 1.26                        & 330                     & 1.31                      & 1.51                    & 323  & 3.92   & 4.37  & 550  & 2.63  & 509  & 2.30 & 402  \\ \hline
        G65                         & 5558                       & \textbf{1.19}               & 1.46                        & 376                     & 1.40                      & 1.59                    & 369  & 3.82   & 4.40  & 625  & 3.04  & 465  & 2.55 & 690  \\ \hline
        G66                         & 6360                       & \textbf{1.29}               & 1.47                        & 424                     & 1.42                      & 1.86                    & 416  & 4.30   & 4.72  & 710  & 3.18  & 510  & 2.55 & 1032 \\ \hline
        G67                         & 6940                       & \textbf{1.01}               & 1.30                        & 470                     & 1.21                      & 1.55                    & 464  & 3.79   & 4.53  & 786  & 2.96  & 550  & 2.51 & 1110 \\ \hline
        G70                         & 9595                       & 0.29                        & 0.44                        & 480                     & \textbf{0.24}             & 0.42                    & 464  & 1.62   & 2.34  & 398  & 12.80 & 541  & 0.42 & 1194 \\ \hline
        G72                         & 6998                       & \textbf{1.29}               & 1.50                        & 478                     & 1.69                      & 1.84                    & 464  & 4.73   & 5.29  & 787  & 3.09  & 554  & 2.66 & 1110 \\ \hline
        G77                         & 9926                       & \textbf{1.35}               & 1.52                        & 665                     & 1.65                      & 1.86                    & 649  & 4.81   & 5.43  & 1102 & 3.24  & 755  & 2.68 & 2646 \\ \hline
        G81                         & 14030                      & \textbf{1.47}               & 1.69                        & 946                     & 1.94                      & 2.06                    & 926  & 5.26   & 5.57  & 1603 & 3.46  & 1038 & 2.78 & 6144 \\
        \bottomrule
    \end{tabular}
    \label{Gset edge}
\end{table*}

Table \ref{Gset edge} shows a detailed result on the large sparse graphs among Gset instances by using edge local search. Edge local search is a variant of local search algorithm, which flips the two nodes of a selected edges simultaneously instead of flipping one nodes. For large sparse graphs, we find that edge local search is more efficient in finding better solutions. Though edge local search leads to more time consumption for each of the sampling procedure, the number of sampling rounds is reduced to find fairly good solutions, which maintains an  acceptable time consumption. Considering the results on G77 and G81, the time consumption of EMADM  increases substantially. On the contrary, to achieve a fairly good solution, MCPG and its variants cost relatively short time on large graphs, barely proportional to the the edge size, which shows the efficiency of MCPG on particularly large graphs.

\subsubsection{Experiments on random regular graph}
To further demonstrate the effectiveness of our proposed algorithms on   large graphs, we perform the experiments on the regular graphs whose size is more than $10^4$ nodes. A $d$-regular graph is a graph where each vertex has $d$ neighbors. For various settings of degree $d$ and node number $n$, we randomly generate $5$ regular graphs using  NetworkX\cite{SciPyProceedings_11}.

Since the optimal solutions for these randomly-generated graphs are hard to obtain, we use a relative performance measure called $P$-ratio based on the results of \cite{dembo2017extremal} for $d$-regular graphs given by
$$
    P(z) = \frac{\mathrm{cut}/n - d/4}{\sqrt{d/4}}.
$$
The $P$-ratio approaches $P^* \approx 0.7632$ as the number of nodes $n \to \infty$. Therefore,  a $P$-ratio close to $P^*$ indicates that the size of the cut is close to the expected optimum.

\begin{figure}[htbp]
    \centering
    \includegraphics[width = \linewidth]{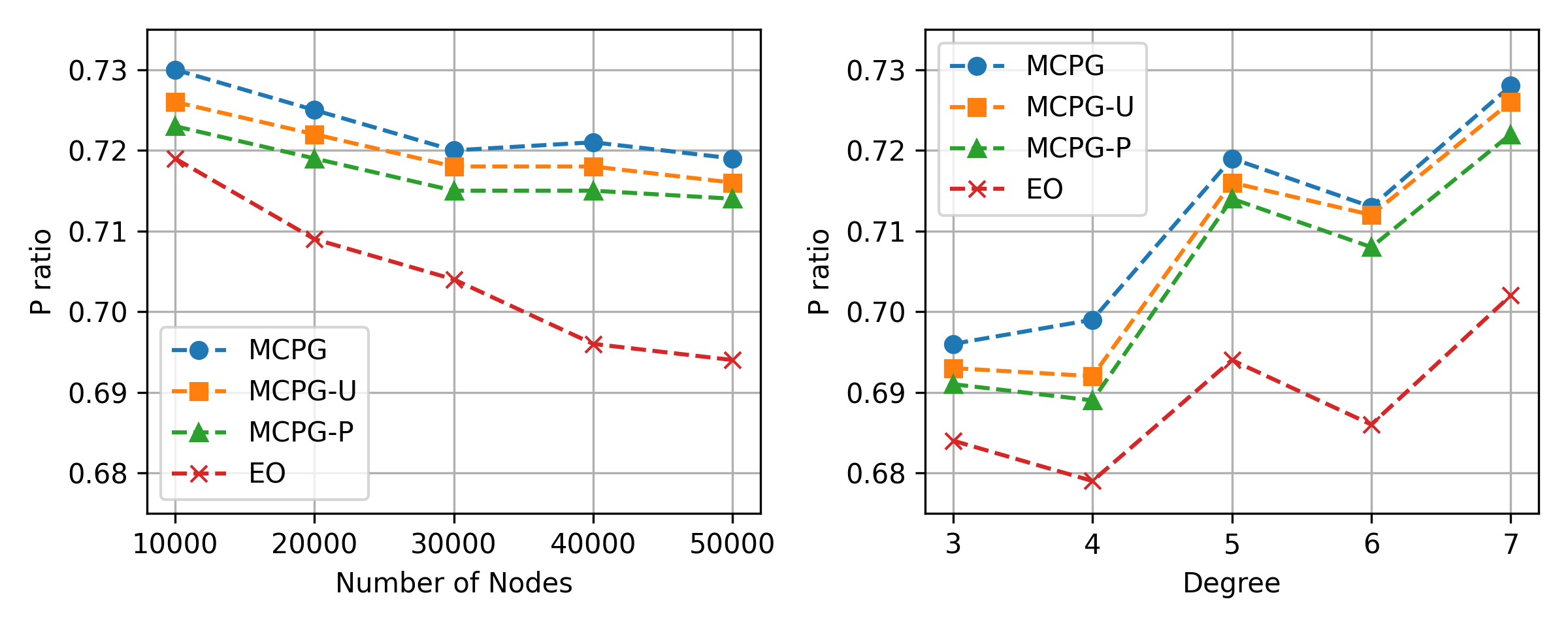}
    \caption{Computational results of regular graphs with increased number of nodes and increased number of degrees.}
    \label{fig:nodes}
\end{figure}

The tendency of the obtained cut size versus the node size and various degrees are shown in Fig. \ref{fig:nodes}. We first conduct the experiment on graphs with the same degree of $5$ and an increasing number of nodes. As the number of nodes increases, the $P$-ratio for MCPG and its variants decreases slowly. Although there remains a gap between the reported $P$ value and $P^*$, the $P$ value obtained by MCPG is far better than the results reported in \cite{toenshoff2019run}. On the contrary, as the heuristic algorithms based on local search, EO performs poorly for  large graphs. It shows that the framework of MCPG is suitable for  large graphs. 

\begin{table}[!ht]
    \centering
    \setlength{\tabcolsep}{4.5pt}
    \caption{Detailed computational results on random-generated regular graphs.}
    \begin{tabular}{llllllllll}
        \toprule
        \multicolumn{2}{c}{Problem} & \multicolumn{2}{c}{MCPG} & \multicolumn{2}{c}{MCPG-U} & \multicolumn{2}{c}{MCPG-P} & \multicolumn{2}{c}{EO}                                                 \\
        \cmidrule(r){1-2} \cmidrule(r){3-4} \cmidrule(r){5-6} \cmidrule(r){7-8} \cmidrule(r){9-10}
        node                        & d                        & obj                        & p-ratio                    & obj                    & p-ratio & obj    & p-ratio & obj    & p-ratio \\
        \cmidrule(r){1-2} \cmidrule(r){3-4} \cmidrule(r){5-6} \cmidrule(r){7-8} \cmidrule(r){9-10}
        10000                       & 5                        & \textbf{20661}             & \textbf{0.730}             & 20616                  & 0.726   & 20587  & 0.723   & 20537  & 0.719   \\ \hline
        20000                       & 5                        & \textbf{41206}             & \textbf{0.725}             & 41146                  & 0.722   & 41085  & 0.719   & 40846  & 0.709   \\ \hline
        30000                       & 5                        & \textbf{61642}             & \textbf{0.720}             & 61584                  & 0.718   & 61492  & 0.715   & 61110  & 0.704   \\ \hline
        40000                       & 5                        & \textbf{82240}             & \textbf{0.721}             & 82090                  & 0.718   & 81958  & 0.715   & 81135  & 0.696   \\ \hline
        50000                       & 3                        & \textbf{67646}             & \textbf{0.696}             & 67522                  & 0.693   & 67436  & 0.691   & 67129  & 0.684   \\ \hline
        50000                       & 4                        & \textbf{84932}             & \textbf{0.699}             & 84580                  & 0.692   & 84430  & 0.689   & 83950  & 0.679   \\ \hline
        50000                       & 5                        & \textbf{102710}            & \textbf{0.719}             & 102529                 & 0.716   & 102435 & 0.714   & 101291 & 0.694   \\ \hline
        50000                       & 6                        & \textbf{118640}            & \textbf{0.713}             & 118596                 & 0.712   & 118326 & 0.708   & 117009 & 0.686   \\ \hline
        50000                       & 7                        & \textbf{135637}            & \textbf{0.728}             & 135532                 & 0.726   & 135262 & 0.722   & 133920 & 0.702   \\
        \bottomrule
    \end{tabular}
\end{table}

Table 4 shows the detailed results on the randomly-generated graphs. we can see that when the degree is odd, our algorithms perform much better. The reason is that, when the degree is even, flipping the assignment of a node may lead to a tie of comparison in high probability. In all cases, MCPG outperforms all the other algorithms. For 4-regular graph with $50000$ nodes, the objective value achieved by MCPG exceeds MCPG-U over 300.

\subsection{The quadratic unconstrained binary optimization (QUBO)}
In this section, we demonstrate the effectiveness of our proposed algorithms on QUBO.
The goal is to solve the following problem:
\begin{equation}
    \label{prob:primary MaxCut}
    \max \quad  x^\top Q x,      \quad
    \st  \quad  x\in \{0, 1\}^n.
\end{equation}
Problem \eqref{prob:primary MaxCut} can be reformulated as \eqref{prob:intro} by converting $\{0, 1\}^n$ to $\{1,-1\}^n$ and the resulted problem still differs from  the MaxCut problem due to an extra linear term. Moreover, the sparsity of $Q$ in our experiments is greater than $0.5$, which fundamentally differs from previous Gset MaxCut instances, where the sparsity is less than $0.1$.

A large and difficult datasets called NBIQ is constructed to test our algorithms. Although there are several public available datasets (such as the Biq Mac library \cite{wiegele2007biq}), most of them are generally easy for modern solvers and computing devices, and the results on them are hard to tell which algorithm is better. Therefore, we generate the NBIQ datasets, a difficult binary quadratic optimization datasets constructed by ourselves. All the instances have a density of $0.8$, which is much denser than the instances constructed from graph data. The weights are generated randomly from uniform distribution between $10$ to $100$, and they possibly turn to be negative with a given probability. By introducing the negative weights, instances of NBIQ is significantly more difficult to be solved than existing datasets. The number of variables in all these instances is more than $5000$. 

We run MCPG and its variants $20$ times independently on each of the instances.
Similar to the MaxCut problem, we use EMADM, an SDP solver specially designed for binary quadratic programming, as a comparison algorithm. We report the gap to the best obtained objective function value. Denoting $\mathrm{UB}$ as the best results obtained by all the listed algorithm and $\mathrm{obj}$ as the cut size, the gap reported is defined as follows:
$$
    \mathrm{gap} = \frac{\mathrm{UB} - \mathrm{obj}}{\mathrm{UB}} \times 100\%.
$$

\begin{table}[!ht]
    \centering
    \setlength{\tabcolsep}{3.5pt}
    \caption{Detailed result for quadratic unconstrained binary optimization.}
    \begin{tabular}{llllllllllll}
        \toprule
        Problem      & \multicolumn{3}{c}{MCPG} & \multicolumn{3}{c}{MCPG-U} & \multicolumn{3}{c}{MCPG-P} & \multicolumn{2}{c}{EMADM}                                                      \\
        \cmidrule(r){1-1} \cmidrule(r){2-4} \cmidrule(r){5-7} \cmidrule(r){8-10} \cmidrule(r){11-12}
        Name         & gap                      &                            & time                       & gap                       &       & time & gap    &       & time & gap  & time \\
                     & (best,                   & mean)                      &                            & (best,                    & mean) &      & (best, & mean) &      &      &      \\
        \cmidrule(r){1-1} \cmidrule(r){2-4} \cmidrule(r){5-7} \cmidrule(r){8-10} \cmidrule(r){11-12}
        nbiq.5000.1  & 0.00                     & \textbf{0.42}              & 364                        & 0.24                      & 0.45  & 361  & 0.58   & 0.63  & 364  & 1.33 & 356  \\ \hline
        nbiq.5000.2  & 0.00                     & \textbf{0.50}              & 368                        & 0.00                      & 0.52  & 364  & 0.93   & 1.08  & 365  & 1.45 & 361  \\ \hline
        nbiq.5000.3  & 0.00                     & \textbf{0.56}              & 370                        & 0.29                      & 0.68  & 369  & 0.75   & 0.97  & 378  & 1.23 & 357  \\ \hline
        nbiq.5000.4  & 0.00                     & \textbf{0.69}              & 370                        & 0.00                      & 0.97  & 367  & 1.82   & 1.88  & 383  & 1.39 & 362  \\ \hline
        nbiq.5000.5  & 0.00                     & \textbf{0.74}              & 374                        & 0.17                      & 0.89  & 371  & 1.85   & 1.92  & 376  & 1.41 & 365  \\ \hline
        nbiq.7000.1  & 0.00                     & \textbf{0.39}              & 513                        & 0.00                      & 0.43  & 510  & 0.54   & 0.60  & 512  & 1.38 & 1132 \\ \hline
        nbiq.7000.2  & 0.00                     & \textbf{0.44}              & 509                        & 0.00                      & 0.50  & 507  & 0.64   & 0.75  & 510  & 1.27 & 1147 \\ \hline
        nbiq.7000.3  & 0.00                     & \textbf{0.61}              & 515                        & 0.34                      & 0.90  & 510  & 1.15   & 1.24  & 512  & 1.47 & 1139 \\ \hline
        nbiq.7000.4  & 0.00                     & \textbf{0.74}              & 519                        & 0.00                      & 0.89  & 515  & 1.46   & 1.55  & 514  & 1.25 & 1206 \\ \hline
        nbiq.7000.5  & 0.00                     & \textbf{0.92}              & 524                        & 0.32                      & 1.00  & 522  & 2.26   & 2.40  & 522  & 1.07 & 1078 \\ \hline
        nbiq.10000.1 & 0.00                     & \textbf{0.29}              & 784                        & 0.00                      & 0.35  & 782  & 0.28   & 0.37  & 783  & -    & -    \\ \hline
        nbiq.10000.2 & 0.00                     & \textbf{0.36}              & 785                        & 0.00                      & 0.48  & 784  & 0.68   & 0.71  & 781  & -    & -    \\ \hline
        nbiq.10000.3 & 0.00                     & \textbf{0.55}              & 788                        & 0.23                      & 0.56  & 784  & 0.64   & 0.91  & 783  & -    & -    \\ \hline
        nbiq.10000.4 & 0.00                     & \textbf{0.61}              & 789                        & 0.00                      & 0.89  & 785  & 0.71   & 1.08  & 787  & -    & -    \\ \hline
        nbiq.10000.5 & 0.00                     & \textbf{0.75}              & 794                        & 0.00                      & 0.98  & 789  & 1.62   & 1.91  & 793  & -    & -    \\
        \bottomrule
    \end{tabular}
    \label{biq}
\end{table}

The results on NBIQ datasets are shown in Table \ref{biq}. MCPG achieves the best objective function value in all cases. In a few cases, MCPG-U  also gets the maximum objective function value  obtained by MCPG. We can see that MCPG easily achieves the best known result, and it takes shorter time than EMADM on large instances. It should be noticed that the computing time of MCPG to reach the best known objective values is almost proportional to the number of nodes, which differs from EMADM and other existing algorithms.

\subsection{Cheeger cut}
In this subsection, we demonstrate the effectiveness of our proposed algorithms on finding the Cheeger cut. Cheeger cut is a kind of balanced graph cut, which are widely used in classification tasks and clustering. Given a graph $G = (V, E, w)$, the ratio Cheeger cut (RCC) and the normal Cheeger cut (NCC) are defined as
$$
    \begin{aligned}
        \mathrm{RCC}(S, S^c)  = \frac{\mathrm{cut}(S,S^c)}{\min\{|S|, |S^c|\}},   \quad
        \mathrm{NCC}(S, S^c)  = \frac{\mathrm{cut}(S,S^c)}{|S|} + \frac{\mathrm{cut}(S,S^c)}{|S^c|},
    \end{aligned}
$$
where $S$ is a subset of $V$ and $S^c$ is its complementary set. The task is to find the minimal ratio Cheeger cut or normal Cheeger cut, which can be converted into the following binary unconstrained programming:
$$
    \begin{aligned}
        \min & \quad \frac{\sum_{(i,j)\in E}  (1-x_ix_j)}{\min\left\{ \sum_i^n (1 + x_i), \sum_i^n (1 - x_i)\right\}}, \\
        \st  & \quad x\in\{-1,1\}^n,                                                                                   \\
    \end{aligned}$$
and
$$ \begin{aligned}
        \min & \quad \frac{\sum_{(i,j)\in E}  (1-x_ix_j)}{ \sum_i^n (1 + x_i)} + \frac{\sum_{(i,j)\in E}  (1-x_ix_j)}{\sum_i^n (1 - x_i)}, \\
        \st  & \quad x\in\{-1,1\}^n.                                                                                                       \\
    \end{aligned}
$$

There are fundamental differences between the Cheeger cut problem and MaxCut problems, since the objective function of the Cheeger cut is not differentiable, which poses a challenge to algorithms based on relaxation and rounding approximations. Therefore, the framework used for many algorithms in MaxCut is not applicable to the Cheeger cut problem.

Here we conduct the experiments on the Gset instances, and compare our algorithm with p-spectral clustering (pSC) algorithm \cite{BueHei2009}. pSC regards the second eigenvector of the graph p-Laplacian as the approximation of the minimal Cheeger cut. We modify the primal pSC algorithm by adjusting the parameter $p$ during the iteration, which accelerates the algorithm by over $3$ times. We run MCPG and its variants $10$ times independently on each of the instances. We report the RCC and NCC of the partition obtained by each of the algorthms.

Table \ref{RCC} presents the results obtained for finding the minimal RCC. Among all instances, MCPG consistently achieves the best objective function value. Notably, for instances G35, G36, and G37, MCPG outperforms pSC by reducing the RCC value by 0.7. Additionally, the execution time is observed to be approximately proportional to the number of graph nodes. In contrast, while pSC demonstrates moderate execution time for small graphs, it exhibits significant time growth as the graph size increases, rendering it impractical for larger instances. Specifically, the running time of pSC on large graph is more than 1.5 times longer compared to MCPG.

\begin{table}[htbp]
    \centering
    \setlength{\tabcolsep}{3.5pt}
    \caption{Detailed result for obtaining ratio Cheeger cut (RCC).}
    \begin{tabular}{llllllllllll}
        \toprule
        Problem & \multicolumn{3}{c}{MCPG} & \multicolumn{3}{c}{MCPG-U} & \multicolumn{3}{c}{MCPG-P} & \multicolumn{2}{c}{pSC}                                                                         \\
        \cmidrule(r){1-1} \cmidrule(r){2-4} \cmidrule(r){5-7} \cmidrule(r){8-10} \cmidrule(r){11-12}
        Name    & RCC                      &                            & time                       & RCC                     &                & time & RCC    &                & time & RCC   & time \\
                & (best,                   & mean)                      &                            & (best,                  & mean)          &      & (best, & mean)          &      &       &      \\
        \cmidrule(r){1-1} \cmidrule(r){2-4} \cmidrule(r){5-7} \cmidrule(r){8-10} \cmidrule(r){11-12}
        G35     & 2.931                    & \textbf{3.039}             & 66                         & 3.036                   & 3.163          & 65   & 3.113  & 3.216          & 65   & 3.864 & 127  \\ \hline
        G36     & 2.858                    & \textbf{2.894}             & 67                         & 2.922                   & 2.982          & 65   & 2.972  & 3.004          & 68   & 3.794 & 131  \\ \hline
        G37     & 2.847                    & \textbf{2.899}             & 68                         & 2.984                   & 3.130          & 69   & 3.050  & 3.173          & 67   & 3.895 & 134  \\ \hline
        G38     & 2.835                    & \textbf{2.861}             & 67                         & 2.875                   & 2.897          & 69   & 2.913  & 2.927          & 68   & 3.544 & 125  \\ \hline
        G48     & 0.084                    & \textbf{0.085}             & 93                         & 0.085                   & 0.088          & 93   & 0.085  & 0.085          & 94   & 0.109 & 184  \\ \hline
        G49     & 0.151                    & \textbf{0.158}             & 96                         & 0.157                   & 0.159          & 96   & 0.163  & 0.168          & 94   & 0.188 & 145  \\ \hline
        G50     & 0.033                    & 0.034                      & 93                         & 0.033                   & \textbf{0.033} & 95   & 0.034  & 0.035          & 92   & 0.040 & 140  \\ \hline
        G51     & 2.890                    & \textbf{2.908}             & 33                         & 2.914                   & 2.960          & 34   & 2.996  & 3.137          & 31   & 3.997 & 52   \\ \hline
        G52     & 2.970                    & \textbf{2.990}             & 35                         & 3.083                   & 3.149          & 37   & 3.220  & 3.364          & 33   & 3.993 & 53   \\ \hline
        G53     & 2.827                    & \textbf{2.846}             & 33                         & 2.892                   & 2.896          & 31   & 2.910  & 2.980          & 34   & 3.441 & 54   \\ \hline
        G54     & 2.852                    & \textbf{2.918}             & 34                         & 2.885                   & 3.018          & 32   & 2.928  & 3.016          & 35   & 3.548 & 57   \\ \hline
        G60     & 0.000                    & \textbf{0.000}             & 245                        & 0.000                   & \textbf{0.000} & 243  & 0.000  & \textbf{0.000} & 246  & 3.240 & 410  \\ \hline
        G63     & 3.107                    & \textbf{3.164}             & 242                        & 3.233                   & 3.358          & 244  & 3.334  & 3.481          & 241  & 4.090 & 373  \\ \hline
        G70     & 0.000                    & \textbf{0.000}             & 342                        & 0.000                   & \textbf{0.000} & 342  & 0.000  & \textbf{0.000} & 342  & 3.660 & 570  \\ \hline
        \bottomrule
    \end{tabular}
    \label{RCC}
\end{table}

Table \ref{NCC} displays the results obtained for finding the minimal NCC. MCPG consistently achieves the best objective function value across all instances. For instances G51-G54, MCPG achieves a reduction in NCC of $1.2$ compared to the results obtained by pSC. The analysis of consuming time for both MCPG and pSC in the context of NCC aligns with the conclusions drawn for RCC.

\begin{table}[htbp]
    \centering
    \setlength{\tabcolsep}{3.5pt}
    \caption{Detailed result for obtaining normal Cheeger cut (NCC).}
    \begin{tabular}{llllllllllll}
        \toprule
        Problem & \multicolumn{3}{c}{MCPG} & \multicolumn{3}{c}{MCPG-U} & \multicolumn{3}{c}{MCPG-P} & \multicolumn{2}{c}{pSC}                                                                         \\
        \cmidrule(r){1-1} \cmidrule(r){2-4} \cmidrule(r){5-7} \cmidrule(r){8-10} \cmidrule(r){11-12}
        Name    & NCC                      &                            & time                       & NCC                     &                & time & NCC    &                & time & NCC   & time \\
                & (best,                   & mean)                      &                            & (best,                  & mean)          &      & (best, & mean)          &      &       &      \\
        \cmidrule(r){1-1} \cmidrule(r){2-4} \cmidrule(r){5-7} \cmidrule(r){8-10} \cmidrule(r){11-12}
        G35     & 4.002                    & \textbf{4.002}             & 67                         & 4.002                   & \textbf{4.002} & 66   & 4.002  & \textbf{4.002} & 66   & 4.403 & 131  \\ \hline
        G36     & 4.002                    & \textbf{4.002}             & 69                         & 4.002                   & \textbf{4.002} & 70   & 4.002  & \textbf{4.002} & 70   & 5.110 & 136  \\ \hline
        G37     & 4.002                    & \textbf{4.002}             & 71                         & 4.002                   & \textbf{4.002} & 73   & 4.002  & \textbf{4.002} & 71   & 4.978 & 137  \\ \hline
        G38     & 4.002                    & \textbf{4.002}             & 68                         & 4.002                   & \textbf{4.002} & 67   & 4.002  & \textbf{4.002} & 68   & 4.448 & 131  \\ \hline
        G48     & 0.267                    & \textbf{0.272}             & 96                         & 0.278                   & 0.291          & 96   & 0.274  & 0.294          & 94   & 0.377 & 158  \\ \hline
        G49     & 0.088                    & \textbf{0.091}             & 96                         & 0.088                   & 0.093          & 96   & 0.091  & 0.097          & 94   & 0.113 & 174  \\ \hline
        G50     & 0.072                    & \textbf{0.075}             & 96                         & 0.072                   & 0.077          & 98   & 0.074  & 0.078          & 94   & 0.093 & 175  \\ \hline
        G51     & 4.378                    & \textbf{4.390}             & 34                         & 4.523                   & 4.606          & 33   & 4.575  & 4.656          & 35   & 5.689 & 62   \\ \hline
        G52     & 4.170                    & \textbf{4.260}             & 36                         & 4.216                   & 4.396          & 36   & 4.365  & 4.651          & 37   & 5.934 & 62   \\ \hline
        G53     & 4.171                    & \textbf{4.193}             & 33                         & 4.210                   & 4.341          & 33   & 4.215  & 4.500          & 35   & 5.066 & 60   \\ \hline
        G54     & 4.267                    & \textbf{4.294}             & 35                         & 4.270                   & 4.394          & 33   & 4.272  & 4.318          & 35   & 5.494 & 65   \\ \hline
        G60     & 0.000                    & \textbf{0.000}             & 255                        & 0.000                   & \textbf{0.000} & 255  & 0.000  & \textbf{0.000} & 255  & 2.150 & 467  \\ \hline
        G63     & 4.001                    & \textbf{4.001}             & 253                        & 4.001                   & \textbf{4.001} & 252  & 4.001  & \textbf{4.001} & 252  & 4.979 & 431  \\ \hline
        G70     & 0.000                    & \textbf{0.000}             & 347                        & 0.000                   & \textbf{0.000} & 345  & 0.000  & \textbf{0.000} & 347  & 4.090 & 636  \\ \hline
        \bottomrule
    \end{tabular}
    \label{NCC}
\end{table}

\subsection{Classical MIMO Detection}
We evaluate our proposed algorithm on the classical MIMO detection problems. The goal is to recover $x_C \in \mathcal Q$ from the linear model
\begin{equation}
    y_C = H_Cx_C+\nu_C,
\end{equation}
where $y_C\in \mathbb C^M$ denotes the received signal, $H_C\in \mathbb C^{M\times N}$ is the channel, $x_C$ denotes the sending signal, and $\nu_C\in \mathbb C^N\sim \mathcal N(0,\sigma^2I_N)$ is the Gaussian noise with known variance. We consider $\mathcal Q = \{z\in \mathbb C: {\rm Re}(z), {\rm Im}(z)\in\{\pm 1\}\}^N$. Our aim is to maximize the likelihood, that is equivalent to
\begin{equation}
    \min_{x_C\in\mathbb C^N}  \|H_Cx_C-y_C\|_2^2,\quad
    \st \quad x_C\in \mathcal Q .
\end{equation}

By separating the real and imaginary parts, we can reduce the problem to a binary one. Let
\begin{equation}
    \begin{gathered}
        H=
        \begin{bmatrix}
            {\rm Re}(H_C) & -{\rm Im}(H_C) \\
            {\rm Im}(H_C) & {\rm Re}(H_C)
        \end{bmatrix}
        , x=
        \begin{bmatrix}
            {\rm Re}(x_C) \\
            {\rm Im}(x_C)
        \end{bmatrix}
        , y=
        \begin{bmatrix}
            {\rm Re}(y_C) \\
            {\rm Im}(y_C)
        \end{bmatrix}.
    \end{gathered}
\end{equation}
The problem is equivalent to the following:
\begin{equation}
    \min_{x\in\mathbb R^{2N}}  \|Hx-y\|_2^2,\quad
    \st \quad x\in \{\pm1\}^{2N} .
    \label{reducedMIMO}
\end{equation}

Our methods are specifically designed to tackle the optimization problem represented by \eqref{reducedMIMO}. Considering that the differences between MCPG and its variants MCPG-U and MCPG-P are minimal, particularly when the values of $M$ and $N$ are small, we have chosen to solely focus on evaluating MCPG in this context. To establish a lower bound (LB) for the problem, we assume no interference between symbols, as described in \cite{HOTML}. However, it is important to note that the LB results are often unachievable in practice. For the purpose of comparison, we have evaluated our algorithm against HOTML, DeepHOTML, and minimum-mean-square-error (MMSE), as outlined in \cite{HOTML}. The codes for these three algorithms were directly running on a CPU workstation.

To test the performance, we examine the bit error rate (BER) performance with respect to the signal noise ratio (SNR). The BER and SNR are defined as
\begin{equation}
    {\rm BER} = \frac{{\rm card}(\{x\neq x^*\})}{2N}, \quad
    {\rm SER} = \frac{\mathbb E[\|H_Cx_C\|_2^2]}
    {\mathbb E[\|\nu_C\|_2^2]}
    = \frac{M\sigma_x^2}{\sigma_v^2} ,
\end{equation}
where $\sigma_x^2 = \mathbb E[\|x_C\|_2^2]$, $\sigma_\nu^2 = \mathbb E[\|\nu_C\|_2^2]$ are the variances, $x^*\in \{\pm 1\}^{2N}$ is the output solution and ${\rm card}(\{x\neq x^*\})$ is the cardinality of the set $\{i:x_i\neq x^*_i\}$. Thus, the noise becomes larger as the SNR decreases, and the problem becomes harder to solve.

To evaluate the performance of the algorithms on the problem, we conducted experiments using fixed matrix sizes $M$ and $N$, and varying signal-to-noise ratios (SNR) in the range ${2, 4, 6, 8, 10, 12}$. We generated datasets consisting of either 100 or 400 randomly selected benchmarks for each fixed SNR value, and calculated the average bit error rate (BER) performance of the algorithms on these benchmarks.

Specifically, we set $M=N=180$ and $M=N=200$, and each dataset contained 400 instances. The results of these experiments are presented in Table \ref{tabMIMO200} and illustrated in Fig. \ref{figMIMO200}. The ``Type" column in the table indicates the value of $M=N$ and the corresponding SNR(dB). It is important to note that in order to ensure appropriate termination of our algorithm, the parameters and execution times of MCPG varied depending on the SNR value. For each instance, MCPG took no more than 0.6 seconds to execute. The comparison of execution times among different algorithms was primarily focused on larger problem sizes. In terms of BER performance, MCPG consistently achieved the lowest BER among all algorithms on the datasets generated for each fixed SNR value. MCPG demonstrated improvement over the other three algorithms, particularly at SNR(dB) values of 2, 4, 6, and 8, with substantial improvement observed at SNR(dB) values of 10 and 12.
\begin{table*}[htbp]
    \centering
    \setlength{\tabcolsep}{3pt}
    \caption{Results on classical MIMO detection problems when $M=N=180$ and $M=N=200$.}
    \begin{tabular}{llllllllll}
        \toprule
        \multicolumn{1}{c}{Type} &
        \multicolumn{1}{c}{LB}   &
        \multicolumn{2}{c}{MCPG} & \multicolumn{2}{c}{HOTML} & \multicolumn{2}{c}{DeepHOTML} & \multicolumn{2}{c}{MMSE}                                                          \\
        \cmidrule(r){1-1} \cmidrule(r){2-2} \cmidrule(r){3-4} \cmidrule(r){5-6} \cmidrule(r){7-8} \cmidrule(r){9-10}
                                 & BER                       & BER                           & time                     & BER      & time  & BER      & time  & BER      & time  \\
        \cmidrule(r){1-1} \cmidrule(r){2-2} \cmidrule(r){3-4} \cmidrule(r){5-6} \cmidrule(r){7-8} \cmidrule(r){9-10}
        180-2                    & 0.103542                  & \textbf{0.174243}             & 0.126                    & 0.189819 & 0.238 & 0.176069 & 0.013 & 0.177729 & 0.003 \\ \hline
        180-4                    & 0.055993                  & \textbf{0.126701}             & 0.149                    & 0.143021 & 0.140 & 0.128285 & 0.004 & 0.141028 & 0.002 \\ \hline
        180-6                    & 0.022729                  & \textbf{0.070410}             & 0.303                    & 0.079924 & 0.176 & 0.077785 & 0.004 & 0.106201 & 0.002 \\ \hline
        180-8                    & 0.006014                  & \textbf{0.013438}             & 0.601                    & 0.013861 & 0.116 & 0.033701 & 0.005 & 0.075201 & 0.002 \\ \hline
        180-10                   & 0.000951                  & \textbf{0.001083}             & 0.246                    & 0.001313 & 0.081 & 0.007556 & 0.005 & 0.050181 & 0.003 \\ \hline
        180-12                   & 0.000042                  & \textbf{0.000042}             & 0.246                    & 0.000076 & 0.064 & 0.000694 & 0.006 & 0.030889 & 0.003 \\ \hline
        200-2                    & 0.105538                  & \textbf{0.174606}             & 0.169                    & 0.189975 & 0.382 & 0.180075 & 0.009 & 0.176825 & 0.003 \\ \hline
        200-4                    & 0.057081                  & \textbf{0.126763}             & 0.257                    & 0.143144 & 0.273 & 0.130656 & 0.006 & 0.140331 & 0.003 \\ \hline
        200-6                    & 0.023519                  & \textbf{0.069806}             & 0.335                    & 0.080588 & 0.294 & 0.076931 & 0.006 & 0.105919 & 0.003 \\ \hline
        200-8                    & 0.006288                  & \textbf{0.014556}             & 0.581                    & 0.015356 & 0.208 & 0.027106 & 0.006 & 0.074975 & 0.003 \\ \hline
        200-10                   & 0.000856                  & \textbf{0.001031}             & 0.423                    & 0.001438 & 0.140 & 0.003625 & 0.007 & 0.048950 & 0.004 \\ \hline
        200-12                   & 0.000019                  & \textbf{0.000025}             & 0.280                    & 0.000063 & 0.111 & 0.000100 & 0.006 & 0.030100 & 0.004 \\
        \bottomrule
    \end{tabular}
    \label{tabMIMO200}
\end{table*}

\begin{figure}[htbp]
    \centering
    \includegraphics[width = \linewidth]{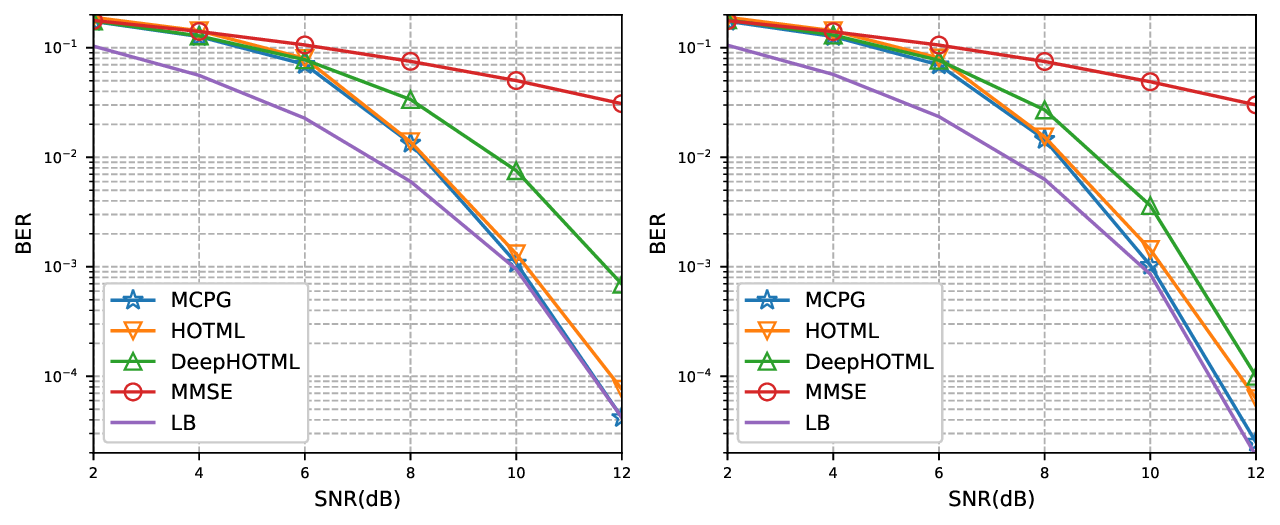}
    \begin{tabular}{p{0.5\linewidth}p{0.42\linewidth}<{\centering}}
        \centering
        (a) $M=N=180$ & (b) $M=N=200$
    \end{tabular}
    \caption{Results on classical MIMO detection problems when $M=N=180$ and $M=N=200$.}
    \label{figMIMO200}
\end{figure}

In addition to the previous experiments, we conducted further experiments on larger problem sizes, specifically $M=N=800$ and $M=N=1200$. To ensure computational efficiency, we randomly generated datasets consisting of 100 benchmarks for each case. However, due to the expensive training process of DeepHOTML, we did not run DeepHOTML for these larger cases.

The results of these experiments are presented in Table \ref{tabMIMO800} and illustrated in Fig. \ref{figMIMO800}. Among the three algorithms compared, MCPG consistently achieved the best performance in terms of BER on these larger cases as well. It is worth noting that at SNR(dB) values of 2, where the noise magnitude is large, it becomes challenging to improve the performance significantly. For SNR(dB) values of 4, 6, 10, and 12, MCPG demonstrates substantial improvements in performance compared to the other algorithms. Furthermore, it is important to note that while the time consumption of HOTML increases rapidly as the values of $M$ and $N$ become larger, the increment in time consumption of MCPG is not as significant. This suggests that MCPG is more computationally efficient compared to HOTML, particularly for larger problem sizes.

\begin{table*}[htbp]
    \centering
    \setlength{\tabcolsep}{3pt}
    \caption{Results on classical MIMO detection problems when $M=N=800$ and $M=N=1200$.}
    \begin{tabular}{llllllll}
        \toprule
        \multicolumn{1}{c}{Type} &
        \multicolumn{1}{c}{LB}   &
        \multicolumn{2}{c}{MCPG} & \multicolumn{2}{c}{HOTML} & \multicolumn{2}{c}{MMSE}                                             \\
        \cmidrule(r){1-1} \cmidrule(r){2-2} \cmidrule(r){3-4} \cmidrule(r){5-6} \cmidrule(r){7-8}
                                 & BER                       & BER                      & time & BER      & time  & BER      & time \\
        \cmidrule(r){1-1} \cmidrule(r){2-2} \cmidrule(r){3-4} \cmidrule(r){5-6} \cmidrule(r){7-8}
        800-2                    & 0.103731                  & \textbf{0.174669}        & 0.50 & 0.192981 & 10.63 & 0.177175 & 0.10 \\ \hline
        800-4                    & 0.056331                  & \textbf{0.126675}        & 1.00 & 0.146444 & 11.88 & 0.140519 & 0.10 \\ \hline
        800-6                    & 0.023131                  & \textbf{0.069094}        & 3.96 & 0.082063 & 13.47 & 0.105463 & 0.10 \\ \hline
        800-8                    & 0.006300                  & \textbf{0.012150}        & 3.29 & 0.012188 & 6.22  & 0.074900 & 0.10 \\ \hline
        800-10                   & 0.000969                  & \textbf{0.001144}        & 1.61 & 0.001363 & 3.35  & 0.049256 & 0.10 \\ \hline
        800-12                   & 0.000031                  & \textbf{0.000031}        & 1.31 & 0.000044 & 2.35  & 0.030075 & 0.09 \\ \hline
        1200-2                   & 0.104883                  & \textbf{0.174588}        & 1.00 & 0.193192 & 82.46 & 0.177675 & 0.45 \\ \hline
        1200-4                   & 0.056400                  & \textbf{0.127004}        & 1.94 & 0.145813 & 77.83 & 0.140567 & 0.47 \\ \hline
        1200-6                   & 0.023179                  & \textbf{0.070346}        & 6.47 & 0.083738 & 73.94 & 0.105979 & 0.47 \\ \hline
        1200-8                   & 0.006179                  & \textbf{0.012529}        & 7.39 & 0.012654 & 61.59 & 0.075567 & 0.47 \\ \hline
        1200-10                  & 0.000875                  & \textbf{0.001050}        & 5.03 & 0.001338 & 22.17 & 0.050167 & 0.46 \\ \hline
        1200-12                  & 0.000058                  & \textbf{0.000054}        & 3.16 & 0.000071 & 15.70 & 0.030388 & 0.46 \\
        \bottomrule
    \end{tabular}
    \label{tabMIMO800}
\end{table*}

\begin{figure}[htbp]
    \centering
    \includegraphics[width=\linewidth]{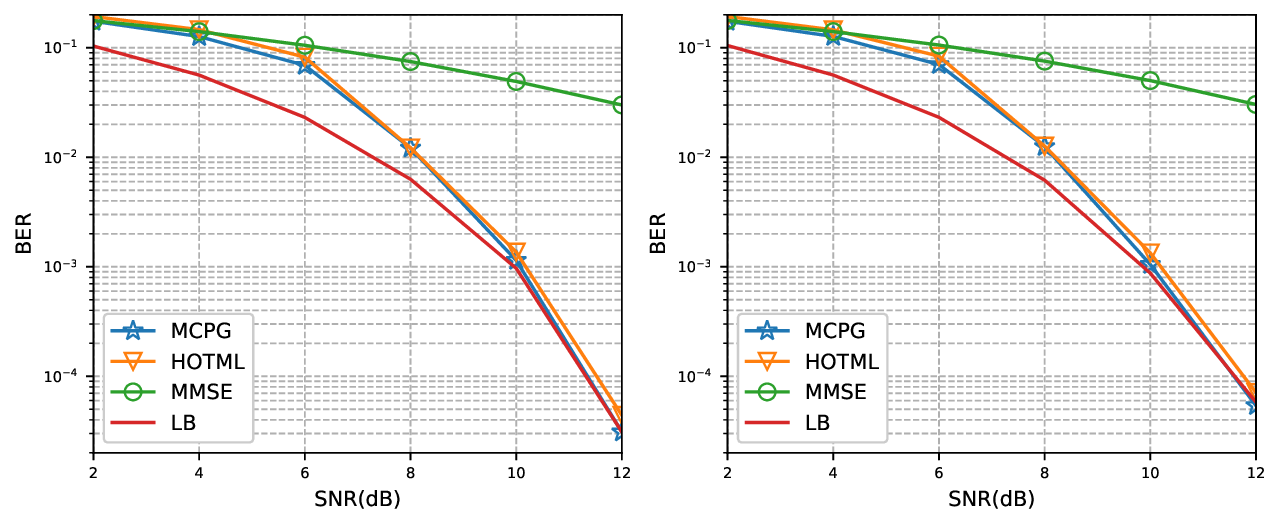}
    \begin{tabular}{p{0.5\linewidth}p{0.43\linewidth}<{\centering}}
        \centering
        (a) $M=N=800$ & (b) $M=N=1200$
    \end{tabular}
    \caption{Results on classical MIMO detection problems when $M=N=800$ and $M=N=1200$.}
    \label{figMIMO800}
\end{figure}

\subsection{MaxSAT}
In this subsection, we demonstrate the effectiveness of our proposed algorithms on (partial) MaxSAT problems.  It is a well-known combinatorial optimization problem in computer science and operations research.
Given a boolean formula in conjunctive normal form (CNF), the goal is to find an assignment of the variables that satisfies the maximum number of clauses in the formula. A formula in CNF consists of a conjunction of one or more clauses, where each clause is a disjunction of one or more literals. For example, the formula $(a \vee \neg b) \wedge (c \vee d \vee \neg e)$ is in CNF, where the first clause is $a \vee \neg b$ and the second clause is $c \vee d \vee \neg e$. Given a formula in CNF consists of clause $c^1,c^2,\cdots,c^m$, we formulate the partial MaxSAT problem as a binary programming problem as follows:
\begin{equation}
    \label{eq:maxsat}
    \begin{aligned}
        \max & \quad \sum_{c^i \in C_1} \max\{c_1^i x_1, c_2^i x_2,\cdots, c_n^i x_n , 0\}, \\ \text{s.t.} &\quad \max\{c_1^i x_1, c_2^i x_2,\cdots, c_n^i x_n, 0\} = 1, \quad \text{for } c^i \in C_2,\\
             & \quad  x \in \{-1,1\}^n,
    \end{aligned}
\end{equation}
where $C_1$ represents the soft clauses that should be satisfied as much as possible and $C_2$ represents the hard clauses that have to be satisfied. $c_j^i$ represents the sign of literal $j$ in clause $i$, i.e.,
$$
    c_j^i=\begin{cases}
        1,  & \quad  \text{$x_j$ appears in the clause $C_i$},      \\
        -1, & \quad  \text{$\neg x_j$ appears in the clause $C_i$}, \\
        0,  & \quad \text{else.}                                    \\
    \end{cases}
$$
The constraints in the partial MaxSAT problem can be converted to an exact penalty in the objective function, which is demonstrated in \eqref{eq:penaltyfun}. Since the left side of the equality constraints in \eqref{eq:maxsat} is no more than 1, the absolute function of the penalty can be dropped. Therefore, we have the following binary programming problem:
\begin{equation}
    \begin{aligned}
        \max & \quad \sum_{c^i \in C_1\cup C_2} w_i\max\{c_1^i x_1, c_2^i x_2,\cdots, c_n^i x_n , 0\}, \\ \text{s.t.} &\quad  x \in \{-1,1\}^n,
    \end{aligned}
\end{equation}
where $w_i = 1$ for $c^i \in C_1$ and $w_i = |C_1| + 1$ for $c^i \in C_2$.

We compare MCPG with its variant MCPG-U, the state-of-art incomplete solver SATLike, the state-of-art complete solver WBO and its incomplete variant WBO-inc. A complete solver tries to finds optimal solution through searching procedure and a incomplete solver usually finds a high-quality solution in the heuristic way. We do not test the learning algorithms, since they still remains poor performance compared to others. Denoting $\mathrm{UB}$ as the best results obtained by all the algorithm, and the gap reported is defined as follows:
$$
    \mathrm{gap} = \frac{\mathrm{UB} - \mathrm{obj}}{\mathrm{UB}} \times 100\%.
$$

We first conduct the experiments on the primary MaxSAT problems without hard clauses. Since the public available datasets for primary MaxSAT is too easy for modern solvers, we randomly generate a large hard dataset. The generated clauses have at most $4$ variables. To make the problem hard and close to reality, the clauses are constructed in four ways:
\begin{itemize}
    \item Clause with single variable $x_i$ or $\neg x_i$ is given. This type of clauses exists for all variable $x_i$.
    \item Two clauses $a_1\vee a_2$ and $\neg a_1\vee \neg a_2$ are given simultaneously, where $a_1$ and $a_2$ are randomly selected.
    \item Clause $a_1\vee a_2\vee a_3$ is given with three randomly selected variables $a_1,a_2,a_3$.
    \item Clauses $a_1\vee a_2\vee a_3\vee a_4$, $\neg a_1\vee \neg a_2$ and $\neg a_3\vee \neg a_4$ are given simultaneously, where the variables are randomly selected.
\end{itemize}

The results of primary MaxSAT problems are shown in Table \ref{res: MaxSAT1}. The running time of all the comparison algorithms is limited within 60, 300 or 500 seconds for instances with various sizes. MCPG outperforms all the competing algorithms, including its variant MCPG-U and the-state-of-art solvers. Not only does MCPG obtain the best results, but also the mean result of MCPG is better than the best result of other solvers. For all the three instances with 3000 variables and 10000 clauses, MCPG finds the best result every time. Note that the complete solver WBO finds the optimal solution for the two instances with 3000 variables, where MCPG also finds the optimal solution easily.

\begin{table}[htbp]
    \centering
    \caption{Results on MaxSAT  without hard clauses on  randomly generated instances. The two columns labeled "WBO/inc" in the table represent the respective results obtained from the WBO and WBO-inc algorithms. The computational time taken by the WBO and WBO-inc algorithms is the same as that of the SATlike algorithm.}
    \setlength{\tabcolsep}{2pt}
    \begin{tabular}{lllllllllllll}
        \toprule
        \multicolumn{3}{c}{Problem} & \multicolumn{3}{c}{MCPG} & \multicolumn{3}{c}{MCPG-U} & \multicolumn{2}{c}{WBO/inc} &                                                                           
        \multicolumn{2}{c}{SATLike}                                                                                                                                                                   \\
        \cmidrule(r){1-3} \cmidrule(r){4-6} \cmidrule(r){7-9} \cmidrule(r){10-11}
        \cmidrule(r){12-13}
        $n$                         & $|C_1|$                  & UB                         & gap                         &       & time & gap    &       & time & gap           & gap  & gap  & time \\
                                    &                          &                            & (best,                      & mean) &      & (best, & mean) &      &               &      &      &      \\
        \cmidrule(r){1-3} \cmidrule(r){4-6} \cmidrule(r){7-9} \cmidrule(r){10-10} \cmidrule(r){11-11}
        \cmidrule(r){12-12} \cmidrule(r){13-13}
        2000                        & 8000                     & 7211                       & \textbf{0.00}               & 0.00  & 36   & 0.04   & 0.06  & 36   & 8.17          & 5.74 & 0.08 & 60   \\\hline
        2000                        & 8000                     & 7204                       & \textbf{0.00}               & 0.01  & 36   & 0.06   & 0.08  & 35   & 8.65          & 6.18 & 0.08 & 60   \\\hline
        2000                        & 8000                     & 7211                       & \textbf{0.00}               & 0.00  & 36   & 0.01   & 0.06  & 35   & 8.68          & 6.28 & 0.07 & 60   \\\hline
        2000                        & 8000                     & 7211                       & \textbf{0.00}               & 0.02  & 35   & 0.04   & 0.06  & 35   & 8.13          & 5.96 & 0.07 & 60   \\\hline
        2000                        & 8000                     & 7200                       & \textbf{0.00}               & 0.00  & 36   & 0.06   & 0.06  & 35   & 7.89          & 5.78 & 0.07 & 60   \\\hline
        2000                        & 10000                    & 8972                       & \textbf{0.00}               & 0.01  & 39   & 0.02   & 0.03  & 38   & 6.88          & 5.49 & 0.12 & 60   \\\hline
        2000                        & 10000                    & 8945                       & \textbf{0.00}               & 0.01  & 39   & 0.02   & 0.05  & 38   & 6.47          & 5.62 & 0.15 & 60   \\\hline
        2000                        & 10000                    & 8969                       & \textbf{0.00}               & 0.01  & 39   & 0.02   & 0.04  & 38   & 7.08          & 5.74 & 0.12 & 60   \\\hline
        2000                        & 10000                    & 8950                       & \textbf{0.00}               & 0.01  & 38   & 0.02   & 0.04  & 38   & 6.74          & 5.89 & 0.13 & 60   \\\hline
        2000                        & 10000                    & 8937                       & \textbf{0.00}               & 0.01  & 39   & 0.02   & 0.03  & 38   & 6.22          & 5.99 & 0.12 & 60   \\\hline
        3000                        & 10000                    & 8954                       & \textbf{0.00}               & 0.00  & 160  & 0.02   & 0.05  & 160  & \textbf{0.00} & 5.58 & 0.01 & 300  \\\hline
        3000                        & 10000                    & 8935                       & \textbf{0.00}               & 0.00  & 161  & 0.01   & 0.04  & 161  & \textbf{0.00} & 5.60 & 0.01 & 300  \\\hline
        3000                        & 10000                    & 8926                       & \textbf{0.00}               & 0.00  & 160  & 0.01   & 0.04  & 161  & 7.78          & 5.39 & 0.01 & 300  \\\hline
        3000                        & 12000                    & 10811                      & \textbf{0.00}               & 0.00  & 166  & 0.02   & 0.04  & 165  & 8.46          & 6.26 & 0.15 & 300  \\\hline
        3000                        & 12000                    & 10823                      & \textbf{0.00}               & 0.00  & 166  & 0.01   & 0.03  & 166  & 8.20          & 5.89 & 0.12 & 300  \\\hline
        3000                        & 12000                    & 10792                      & \textbf{0.00}               & 0.00  & 165  & 0.02   & 0.03  & 166  & 8.53          & 5.93 & 0.06 & 300  \\\hline
        3000                        & 15000                    & 13712                      & \textbf{0.00}               & 0.00  & 186  & 0.01   & 0.01  & 185  & 6.66          & 5.49 & 0.13 & 300  \\\hline
        3000                        & 15000                    & 13705                      & \textbf{0.00}               & 0.00  & 186  & 0.00   & 0.01  & 185  & 5.72          & 5.44 & 0.16 & 300  \\\hline
        5000                        & 20000                    & 18032                      & \textbf{0.00}               & 0.01  & 341  & 0.05   & 0.06  & 344  & 7.83          & 6.32 & 0.11 & 500  \\\hline
        5000                        & 20000                    & 18008                      & \textbf{0.00}               & 0.00  & 344  & 0.04   & 0.06  & 342  & 7.16          & 6.26 & 0.11 & 500  \\
        \bottomrule
    \end{tabular}
    \label{res: MaxSAT1}
\end{table}

We conducted an evaluation of MCPG and other algorithms using the random track of unweighted partial MaxSAT instances from the MSE 2016 competition. The running time for all compared algorithms was limited to 60 seconds to achieve the best possible performance. For MCPG, we independently repeated the experiments 20 times on each instance and recorded the best results obtained as well as the mean gaps.

A summary of the experiments is shown in Table \ref{res: MaxSATpar}. It provides the number of the instances in which each algorithm successfully achieved the best results, i.e., the gap is zero. Notably, MCPG consistently demonstrated superior performance across various problem sets compared to SATlike. In particular, on the \texttt{min2sat} problem set, MCPG outperformed SATlike by obtaining the best result on 53 instances. Furthermore, MCPG achieved a mean gap of zero for 22 instances of \texttt{min2sat}, 33 instances of \texttt{min3sat}, and all other instances, indicating that MCPG achieved the best results consistently across the 20 repeated tests.

\begin{table}[htbp]
    \centering
    \caption{Statistics of “gap\%” on the random track datasets in MSE2016. \texttt{pct} denotes the ratio of \texttt{num} to the total number of the problem set. }
    \setlength{\tabcolsep}{2pt}
    \renewcommand\arraystretch{1.2}
    \begin{tabular}{|c|c|ll|ll|ll|ll|ll|}
        \hline
        \multirow{3}{*}{Problem Set}       & \multirow{3}{*}{Range} & \multicolumn{4}{c|}{MCPG} & \multicolumn{2}{c|}{\multirow{2}{*}{SATlike}} & \multicolumn{2}{c|}{\multirow{2}{*}{WBO}} & \multicolumn{2}{c|}{\multirow{2}{*}{WBO-inc}}                                        \\ \cline{3-6}
                                           &                        & \multicolumn{2}{c|}{best} & \multicolumn{2}{c|}{mean}                     &                                           &                                               &     &      &     &                   \\ \cline{3-12}
                                           &                        & num                       & pct                                           & num                                       & pct                                           & num & pct  & num & pct  & num & pct  \\ \hline
        \multirow{5}{*}{\texttt{min2sat}}  & 0.00                   & 53                        & 0.88                                          & 22                                        & 0.37                                          & 18  & 0.30 & 2   & 0.03 & 1   & 0.02 \\ \cline{2-12}
                                           & (0.00,1.00]            & 7                         & 0.12                                          & 38                                        & 0.63                                          & 31  & 0.52 & 0   & 0.00 & 8   & 0.13 \\ \cline{2-12}
                                           & (1.00,2.00]            & 0                         & 0.00                                          & 0                                         & 0.00                                          & 11  & 0.18 & 0   & 0.00 & 27  & 0.45 \\ \cline{2-12}
                                           & (2.00,3.00]            & 0                         & 0.00                                          & 0                                         & 0.00                                          & 0   & 0.00 & 0   & 0.00 & 19  & 0.32 \\ \cline{2-12}
                                           & $>3.00$                & 0                         & 0.00                                          & 0                                         & 0.00                                          & 0   & 0.00 & 58  & 0.97 & 5   & 0.08 \\ \hline
        \multirow{5}{*}{\texttt{min3sat}}  & 0.00                   & 50                        & 0.83                                          & 33                                        & 0.55                                          & 50  & 0.83 & 0   & 0.00 & 1   & 0.02 \\ \cline{2-12}
                                           & (0.00,1.00]            & 5                         & 0.08                                          & 16                                        & 0.27                                          & 3   & 0.05 & 0   & 0.00 & 2   & 0.03 \\ \cline{2-12}
                                           & (0.00,2.00]            & 4                         & 0.07                                          & 9                                         & 0.15                                          & 6   & 0.10 & 0   & 0.00 & 9   & 0.15 \\ \cline{2-12}
                                           & (2.00,3.00]            & 1                         & 0.02                                          & 1                                         & 0.02                                          & 1   & 0.02 & 0   & 0.00 & 8   & 0.13 \\ \cline{2-12}
                                           & $>3.00$                & 0                         & 0.00                                          & 1                                         & 0.02                                          & 0   & 0.00 & 60  & 1.00 & 40  & 0.67 \\ \hline
        \multirow{5}{*}{\texttt{pmax2sat}} & 0.00                   & 59                        & 1.00                                          & 59                                        & 1.00                                          & 59  & 1.00 & 0   & 0.00 & 0   & 0.00 \\ \cline{2-12}
                                           & (1.00,2.00]            & 0                         & 0.00                                          & 0                                         & 0.00                                          & 0   & 0.00 & 0   & 0.00 & 6   & 0.10 \\ \cline{2-12}
                                           & (2.00,3.00]            & 0                         & 0.00                                          & 0                                         & 0.00                                          & 0   & 0.00 & 2   & 0.03 & 34  & 0.58 \\ \cline{2-12}
                                           & $>3.00$                & 0                         & 0.00                                          & 0                                         & 0.00                                          & 0   & 0.00 & 57  & 0.97 & 19  & 0.32 \\ \hline
        \multirow{5}{*}{\texttt{pmax3sat}} & 0.00                   & 30                        & 1.00                                          & 30                                        & 1.00                                          & 30  & 1.00 & 0   & 0.00 & 0   & 0.00 \\ \cline{2-12}
                                           & (1.00,2.00]            & 0                         & 0.00                                          & 0                                         & 0.00                                          & 0   & 0.00 & 0   & 0.00 & 1   & 0.03 \\ \cline{2-12}
                                           & (2.00,3.00]            & 0                         & 0.00                                          & 0                                         & 0.00                                          & 0   & 0.00 & 2   & 0.07 & 13  & 0.43 \\ \cline{2-12}
                                           & $>3.00$                & 0                         & 0.00                                          & 0                                         & 0.00                                          & 0   & 0.00 & 28  & 0.93 & 16  & 0.43 \\ \hline
    \end{tabular}
    \label{res: MaxSATpar}
\end{table}

The specific results for selected individual instances are provided in Table \ref{res: MaxSATpar2}. It should be noted that while we controlled the running time of MCPG, other algorithms may have achieved its final results within the given time constraints. In the case of the selected min2sat results, MCPG may not have achieved the best outcome for every running time, but the average performance gap of MCPG surpasses that of SATlike. This observation highlights the superior capability of MCPG in achieving better overall results within nearly half of the runnning time compared to SATlike.

\begin{table}[htbp]
    \centering
    \caption{Selected results for partial MaxSAT from the MSE 2016 competition. The two columns labeled "WBO/inc" in the table represent the respective results obtained from the WBO and WBO-inc algorithms. Both two algorithms have the same time limits as SATlike.}
    \setlength{\tabcolsep}{2pt}
    \begin{tabular}{lllllllllll}
        \toprule
        \multicolumn{4}{c}{Problem} & \multicolumn{3}{c}{MCPG} & \multicolumn{2}{c}{WBO/inc} & \multicolumn{2}{c}{SATLike}                                                                                \\
        \cmidrule(r){1-4} \cmidrule(r){5-7} \cmidrule(r){8-9} \cmidrule(r){10-11}
        name                        & $|C_2$                   & $|C_1|$                     & UB                          & gap    &               & time & gap   & gap           & gap           & time \\
                                    &                          &                             &                             & (best, & mean)         &      &       &               &                      \\
        \cmidrule(r){1-4} \cmidrule(r){5-7} \cmidrule(r){8-9} \cmidrule(r){10-11}
        min2sat-800-1               & 4013                     & 401                         & 340                         & 0.00   & \textbf{0.00} & 27   & 24.41 & 2.35          & 1.47          & 60   \\ \hline
        min2sat-800-2               & 3983                     & 401                         & 352                         & 0.00   & \textbf{0.06} & 27   & 24.43 & 0.85          & 0.85          & 60   \\\hline
        min2sat-800-3               & 3956                     & 400                         & 340                         & 0.00   & \textbf{0.03} & 27   & 22.35 & 1.76          & 0.59          & 60   \\\hline
        min2sat-800-4               & 3933                     & 398                         & 349                         & 0.00   & \textbf{0.00} & 27   & 26.36 & 2.58          & 1.72          & 60   \\\hline
        min2sat-800-5               & 3871                     & 402                         & 353                         & 0.00   & 0.42          & 27   & 20.11 & 1.70          & \textbf{0.28} & 60   \\\hline
        min2sat-1040-1              & 4248                     & 525                         & 458                         & 0.00   & \textbf{0.12} & 32   & 21.62 & 2.40          & 0.22          & 60   \\\hline
        min2sat-1040-2              & 4158                     & 528                         & 473                         & 0.00   & \textbf{0.18} & 33   & 22.83 & 1.27          & 0.21          & 60   \\\hline
        min2sat-1040-3              & 4194                     & 527                         & 473                         & 0.00   & \textbf{0.29} & 33   & 17.12 & 0.21          & 0.42          & 60   \\\hline
        min2sat-1040-4              & 4079                     & 520                         & 474                         & 0.00   & \textbf{0.14} & 33   & 18.57 & 1.69          & 0.21          & 60   \\\hline
        min2sat-1040-5              & 4184                     & 523                         & 465                         & 0.43   & 0.47          & 33   & 17.42 & 1.29          & \textbf{0.00} & 60   \\\hline
        min3sat\_v80c400\_1         & 3931                     & 150                         & 95                          & 0.00   & \textbf{0.84} & 19   & 23.16 & 6.32          & 1.05          & 60   \\ \hline
        min3sat\_v80c400\_2         & 3888                     & 149                         & 99                          & 0.00   & \textbf{0.00} & 19   & 26.26 & 2.02          & \textbf{0.00} & 60   \\ \hline
        min3sat\_v80c400\_3         & 3842                     & 141                         & 101                         & 0.00   & \textbf{0.00} & 19   & 30.69 & \textbf{0.00} & \textbf{0.00} & 60   \\ \hline
        min3sat\_v80c400\_4         & 3854                     & 149                         & 95                          & 0.00   & \textbf{0.00} & 19   & 34.74 & 4.21          & 2.11          & 60   \\ \hline
        pmax2sat\_C5000\_1          & 150                      & 4850                        & 3845                        & 0.00   & \textbf{0.00} & 19   & 3.17  & 1.85          & \textbf{0.00} & 60   \\ \hline
        pmax2sat\_C5000\_2          & 150                      & 4850                        & 3859                        & 0.00   & \textbf{0.00} & 19   & 5.05  & 2.28          & \textbf{0.00} & 60   \\ \hline
        pmax2sat\_C5000\_3          & 150                      & 4850                        & 3851                        & 0.00   & \textbf{0.00} & 19   & 4.62  & 1.90          & \textbf{0.00} & 60   \\ \hline
        pmax2sat\_C5000\_4          & 150                      & 4850                        & 3857                        & 0.00   & \textbf{0.00} & 19   & 5.16  & 2.59          & \textbf{0.00} & 60   \\ \hline
        pmax3sat\_C800\_1           & 100                      & 700                         & 680                         & 0.00   & \textbf{0.00} & 10   & 1.76  & 1.32          & \textbf{0.00} & 60   \\ \hline
        pmax3sat\_C800\_2           & 100                      & 700                         & 683                         & 0.00   & \textbf{0.00} & 9    & 4.10  & 1.76          & \textbf{0.00} & 60   \\ \hline
        pmax3sat\_C800\_3           & 100                      & 700                         & 677                         & 0.00   & \textbf{0.00} & 10   & 4.14  & 1.33          & \textbf{0.00} & 60   \\ \hline
        pmax3sat\_C800\_4           & 100                      & 700                         & 675                         & 0.00   & \textbf{0.00} & 10   & 2.37  & 1.33          & \textbf{0.00} & 60   \\ \hline
        \bottomrule
    \end{tabular}
    \label{res: MaxSATpar2}
\end{table}

\section{Conclusion}
\label{sec:conclusion}
In this paper, we provide a new insight of the policy gradient method to solve binary optimization problems. To address the binary constraint and the discrete feasible region, we construct a probabilistic model to convert the binary optimization problems into a  stochastic optimization problem. Policy gradient methods are the key to guiding the policy for searching in binary spaces. Instead of direct sampling, the MCMC sampling methods are used to preserve the locality and improve the sampling efficiency. Furthermore, we incorporate filter function, i.e., the local search algorithm, into the objective function to promote the performance. The computational results show that the proposed method can compare favorably with state-of-the-art algorithms on a variety of test problems.

There are several potential directions to be improved in the future. A better architecture of neural networks should be designed to describe the combinatorial structures.   A more efficient scheme for generalization is also important.  It is interesting  to investigate more difficult  problems, including  mixed integer linear programming and nonlinear integer problems with constraints.

\nocite{}

\bibliographystyle{siam}
\bibliography{ref}

\end{document}